\begin{document}
\baselineskip 17pt
\hfuzz=6pt

\newtheorem{theorem}{Theorem}[section]
\newtheorem{proposition}[theorem]{Proposition}
\newtheorem{coro}[theorem]{Corollary}
\newtheorem{lemma}[theorem]{Lemma}
\newtheorem{definition}[theorem]{Definition}
\newtheorem{example}[theorem]{Example}
\newtheorem{remark}[theorem]{Remark}
\newcommand{\ra}{\rightarrow}
\renewcommand{\theequation}
{\thesection.\arabic{equation}}
\newcommand{\ccc}{{\mathcal C}}
\newcommand{\one}{1\hspace{-4.5pt}1}

\newtheorem*{TheoremA}{Theorem A}

\newtheorem*{TheoremB}{Theorem B}

 \def \Lips  {{   \Lambda}_{L}^{ \alpha,  s }(X)}
\def\BL {{\rm BMO}_{L}}
\def\Ma { {\mathcal M} }
\def\MM { {\mathcal M}_0^{p, 2, M, \epsilon}(L) }
\def\dMM { \big({\mathcal M}_0^{p,2, M,\epsilon}(L)\big)^{\ast} }

\def\HHAL { \mathbb{H}^1_{L,{at}, M}(X) }
\def\HAL { H^1_{L,{at}, M}(X) }
\def\HA{ H^1_{L,{at}, 1}(X) }
\def\HSL { H^1_{L }(X) }
\def\HML { H^1_{L, {mol}, M, \epsilon}(X) }
\def\HHML { \mathbb{H}^1_{L, {mol}, M, \epsilon}(X) }
\def\HM{ H^1_{L, {mol}, 1}(X) }
\def\HMH { H^1_{L, {\rm max}, h}(X) }
\def\HNH { H^1_{L, {\mathcal N}_h}(X) }

\def\HSP { H^1_{L, S_P}(X) }
\def\HMP { H^1_{L, {\rm max}, P}(X) }
\def\HNP { H^1_{L, {\mathcal N}_P}(X) }

  \def\RR {  {\mathbb R}^n}
\def\HSL { H^p_{L, S_h}(X) }
\newcommand\mcS{\mathcal{S}}
\newcommand\mcB{\mathcal{B}}
\newcommand\D{\mathcal{D}}
\newcommand\C{\mathbb{C}}
\newcommand\N{\mathcal{N}}
\newcommand\R{\mathbb{R}}
\newcommand\Rf{\mathfrak{R}}
\newcommand\G{\mathcal{G}}
\newcommand\T{\mathbb{T}}
\newcommand\Z{\mathbb{Z}}
\newcommand\lp{L^p(M,\mu)}
\newcommand{\X}{\mathbb{X}}
\newcommand{\PP}{\mathbb{P}}
\newcommand{\cX}{\mathcal{X}}
\newcommand{\cY}{\mathcal{Y}}
\newcommand{\intav}{-\hspace{-0.05cm}\!\!\!\!\!\!\int}
\newcommand{\E}{\mathbb{E}}
\newcommand{\inti}{\frac{1}{|I|} \, \int_I \,}
\newcommand{\al}{\alpha}
\newcommand{\om}{\omega}
\newcommand{\Om}{\Omega}
\newcommand{\s}{\smallskip}
\newcommand{\m}{\medskip}
\newcommand{\ch}{\text{ch}}
\newcommand{\bb}{\bigskip}
\newcommand{\e}{\varepsilon}
\newcommand{\II}{\text{II}}
\newcommand{\I}{\text{I}}
\newcommand{\vmo}{{\rm VMO}}
\newcommand{\BMO}{{\rm BMO}}
\newcommand{\VMO}{{\rm VMO}}
\newcommand{\La}{\Lambda}

\newcommand\CC{\mathbb{C}}
\newcommand\NN{\mathbb{N}}
\newcommand\ZZ{\mathbb{Z}}

\newcommand\bmo{{\rm BMO}_L(X)}

\renewcommand\Re{\operatorname{Re}}
\renewcommand\Im{\operatorname{Im}}

\newcommand{\mc}{\mathcal}

\def\SL{\sqrt[m] L}
\newcommand{\la}{\lambda}
\def \l {\lambda}
\newcommand{\eps}{\varepsilon}
\newcommand{\pl}{\partial}
\newcommand{\supp}{{\rm supp}{\hspace{.05cm}}}
\newcommand{\x}{\times}
\newcommand{\mar}[1]{{\marginpar{\sffamily{\scriptsize
        #1}}}}

\newcommand\wrt{\,{\rm d}}

\title[The Garnett--Jones Theorem on $\BL$  ]
{The Garnett--Jones Theorem    on   BMO spaces  associated with operators
  and   applications
  }

\author[P. Chen,  X.T. Duong,  J. Li, L. Song and L.  Yan]{Peng Chen, \
	 Xuan Thinh Duong,
	Ji Li, \ Liang Song \ and \ Lixin Yan
}
\address{Peng Chen, Department of Mathematics, Sun Yat-sen (Zhongshan)
	University, Guangzhou, 510275, P.R. China}
\email{chenpeng3@mail.sysu.edu.cn}
\address{Xuan Thinh Duong, Department of Mathematics, Macquarie University, NSW 2109, Australia}
\email{xuan.duong@mq.edu.au}
\address{Ji Li, Department of Mathematics, Macquarie University, NSW, 2109, Australia}
\email{ji.li@mq.edu.au}
\address{Liang Song, Department of Mathematics, Sun Yat-sen (Zhongshan)
	University, Guangzhou, 510275, P.R. China}
\email{songl@mail.sysu.edu.cn}
\address{
	Lixin Yan, Department of Mathematics, Sun Yat-sen (Zhongshan) University, Guangzhou, 510275, P.R. China}
\email{mcsylx@mail.sysu.edu.cn
}

\date{\today}
 \subjclass[2010]{42B35,%function space
  42B37, %harmonic analysis and PDE
  %42B30, %Hp space
  %35J10, %Schrodinger operator
   47F05 %partial differential operators
   }
\keywords{The Garnett--Jones theorem,      BMO spaces,
John--Nirenberg inequality,
	nonnegative self-adjoint operator,
Gaussian upper bounds,
Carleson measure}

\begin{abstract}
Let $X$ be a metric space with doubling measure, and
 $L$ be a nonnegative self-adjoint operator on $L^2(X)$ whose heat kernel satisfies the Gaussian upper bound.
 Let $f$ be in the space  $ {\rm BMO}_L(X)$   associated with the operator $L$ and we define
 its distance from the subspace $L^{\infty}(X)$ under the  $ {\rm BMO}_L(X)$ norm as follows:
$$
	{\rm dist} (f, L^{\infty}):=	 \inf_{g\in L^{\infty}}  \|f -g\|_{{\rm BMO}_L(X)}.
$$
In this paper we prove that ${\rm dist} (f, L^{\infty})$ is equivalent to the infimum of  the constant $\varepsilon$  in  the  John--Nirenberg inequality for the space ${\rm BMO}_L(X)$:
$$
	\sup_B { \mu\big(\{ x\in B: |f(x)-e^{-{r_B^2}L}f(x)|>\lambda\}\big)   \over \mu(B)}  \leq e^{-\lambda/\varepsilon}\ \ \ \ {\rm for\ large\ } \lambda.
$$
This  extends the well-known result of  Garnett and Jones \cite{GJ1}  for the classical $\BMO$ space (introduced by John and Nirenberg).
As an application, we  show   that a $\BMO_L(X)$ function with compact support can be decomposed as the summation of an $L^\infty$-function and the integral of the heat kernel (associated with $L$) against a finite Carleson measure on $X\times[0,\infty)$. The key new technique is a geometric construction involving the semigroup $e^{-tL}$. We also resort to several fundamental tools including the stopping time argument and the random dyadic lattice.
\end{abstract}

\maketitle

 %\tableofcontents

\section{Introduction}
\setcounter{equation}{0}

\noindent
{\bf 1.1. Background.} \  Let $B$ denote a ball in the Euclidean space ${\mathbb R^n}$ and
$|B|$ denote the Lebesgue measure of $B$. We say that a locally integrable real-valued function
$f$ on ${\mathbb R^n}$
has bounded mean oscillation, i.e. $f\in {\rm BMO}(\RR)$, if
\begin{eqnarray}\label{e1.1}
\|f\|_{\ast}:=\sup_B {1\over |B|}\int_B |f(x)-f_B|\, dx <\infty,
\end{eqnarray}
where
\begin{eqnarray}\label{e1.2}
 f_B :={1\over |B|}\int_B  f(x) \, dx.
\end{eqnarray}
It is well-known that $L^{\infty}(\mathbb R^n)\subsetneq  {\rm BMO}(\RR)$.
In \cite{GJ1}  Garnett and Jones gave comparable upper and lower bounds for the distance
\begin{eqnarray}\label{e1.3}
{\rm dist} (f, L^{\infty}):=	 \inf_{g\in L^{\infty}}  \|f -g\|_{\ast}
\end{eqnarray}
by the infimum of  the constant $\varepsilon$ in  the   John--Nirenberg inequality \cite{JN}. To be more precise,
consider John--Nirenberg inequality
\begin{eqnarray}\label{e1.4}
	\sup_B { \left|\{ x\in B: |f(x)-f_B|>\lambda\}\right|  \over |B|}  \leq e^{-\lambda/\varepsilon}\ \ \ \ \ \  {\rm for\  } \lambda \ {\rm  \ large},
\end{eqnarray}
and define
$$
\varepsilon(f)=\inf \{\varepsilon>0:   \eqref{e1.4} \ {\rm holds}\}.
$$
 Garnett--Jones \cite{GJ1}  states that  there are positive  constants $c_1$ and $c_2$  depending only on the dimension  $n$ such that
 \begin{eqnarray}\label{e1.5}
	c_1 \varepsilon(f)\leq \inf_{g\in L^{\infty}}  \|f -g\|_{\ast}  \leq c_2 \varepsilon(f).
\end{eqnarray}
  As pointed out in \cite{GJ1}, a generalization of \eqref{e1.5} on space of homogeneous type \cite{CW} can be  proved with minor variations in the argument (we refer to \cite{J0}).

The   Garnett--Jones theorem  has several important applications.
One example is on the decomposition of BMO functions. Note that  Fefferman--Stein \cite{FS} showed
that
   $f\in {\rm BMO}(\RR)$ if and only if
\begin{eqnarray}\label{e1.6}
	f=g_0 +\sum_{j=1}^n R_j g_j,
\end{eqnarray}
where $R_j$ is the $j$-th Riesz transform, and $g_0, g_1, \ldots, g_n\in L^{\infty}$ with $\|f\|_{\ast}\sim \inf \Big\{\sum\limits_{j=0}^n \|g_j\|_{\infty}: \eqref{e1.6}  \ {\rm holds}\Big\}$.
The  result \eqref{e1.5} can be reformulated as
\begin{eqnarray}\label{e1.7}
	\varepsilon(f) \sim \inf \Big\{ \sum_{j=1}^n \|g_j\|_{\infty}:
	\eqref{e1.6}  \ {\rm holds\ for\ some\ }  g_0\in L^{\infty}  \Big\}.
\end{eqnarray}
Another application is a higher dimensional  Helson--Szeg\"o theorem, see \cite[Corollary 1.2]{GJ1}. In \cite{J3},
 Jones also used it to     derive  an estimate for corona solutions. Bourgain \cite{Bou} applied it in the study of embedding $L^1$ in $L^1/H^1$, proving that $L^1$ is isomorphic to a subspace of $L^1/H^1$.
In the last few decades, the    Garnett--Jones theorem has been  studied extensively, see for example
  \cite{Bou, G, GJ1, GJ2, J1, J2, J3,  U1, V, W} and  the references therein.

\medskip

\noindent
{\bf 1.2. Main results}. \ Our setting throughout the paper is as follows.

\noindent Let $(X, d, \mu)$ be a metric
measure space with  $\mu$ satisfying the doubling  condition
\begin{eqnarray}\label{doubling2}
	\mu(B(x, \lambda r)) \le C \lambda^n \mu(B(x,r)), \ \,\;  \forall x \in X,\, \lambda>1, \, r > 0,
\end{eqnarray}
where $C$ and $n$ are positive constants and $\mu(B(x,r))$ denotes the volume of the open ball
$B(x,r)$ of center $x$ and radius $r$.

\noindent Let  $L$ be an operator on $L^2(X)$ satisfying
  the following properties:
\begin{enumerate}
	\item [\textbf{(H1)}]  $L$ is a densely defined, nonnegative self-adjoint operator on $L^2(X)$;
	\item [\textbf{(H2)}] The kernel of $e^{-tL}$, denoted by ${h}_{t}(x,y)$, is a measurable function on
	$X\times X$ and satisfies a Gaussian upper bound, that is,
	$$
	\left|{ h}_{t}(x,y)\right| \leq  {C\over   \mu(B(x,\sqrt{t}))} \exp\left(-{d(x,y)^2\over ct}\right)
	$$
	for all $t>0$ and $x,y\in X$, where $C$ and $c$ are positive constants.
\end{enumerate}

Next, we recall the new BMO space introduced in \cite{DY1}.
For some $x_0\in X$ and $\beta>0$, let $\mathcal M_{x_0,\beta}$ be the set of functions defined as
$$ \mathcal M_{x_0,\beta}:=\left\{ f\in L^1_{\rm loc}(X): \int_X \frac{|f(x)|}{(1+d(x_0,x))^\beta\mu(B(x_0,1+d(x_0,x)))}d\mu(x)<\infty\right\} .
$$
Then denote by $\mathcal M$ the set of functions
$$
\mathcal M=\bigcup_{x_0\in X}\bigcup_{\beta>0}\mathcal M_{x_0,\beta}.
$$
In \cite{DY1}, Duong and Yan
  introduced and
developed a new function  space ${\rm BMO}_L(X)$   associated with an operator $L$
by using a maximal function introduced by Martell
in \cite{Ma}.
Given an operator $L$
  satisfying \textbf{(H1)} and  \textbf{(H2)},
  the key idea in \cite{DY1} is to view $
e^{-tL}f$ as an average version of $f$  (at the scale $t$) and use the quantity
\begin{eqnarray}	\label{e1.8}
e^{-{r^2_B}L}	f(x)=\int_{X} h_{{r^2_B}}(x,y)f(y) \, d\mu(y)
\end{eqnarray}
to replace the average $f_B$ in the definition (\ref{e1.2}) of the
classical  $\BMO$ space.
We then  say that $f\in \mathcal M$ is in ${\rm BMO}_L(X)$, the {\rm BMO} space associated with $L$,
  if
\begin{eqnarray}	\label{e1.9}
\|f\|_{\BL(X)}:=	 \sup_B{1\over \mu(B)}\int_{B}|f(x)-e^{-{r^2_B}L}	 f(x)| \, d\mu(x)<\infty.
\end{eqnarray}
It was proved in \cite[Section 2]{DY1} that the classical ${\rm BMO}(\mathbb R^n)$ coincides with the one associated with the standard Laplacian  $\Delta$ on $\mathbb R^n$, that is, ${\rm BMO}(\mathbb R^n)= {\rm BMO}_\Delta(\mathbb R^n)$.
When the kernel $h_t(x,y)$ of $e^{-tL}$ satisfies
$
\int_{  X} h_t(x,y)d\mu(y)=1 $ for  almost all $x\in X$,
one may readily verify that ${\rm BMO}(X)\subseteq {\rm BMO}_L(X)$.
Depending on the choice of the
operator $L$  such as  Schr\"odinger operators   or the divergent form operators,  ${\rm BMO}_L(X)$ can be substantially  different from ${\rm BMO}(X)$, see for examples,    \cite{ACDH, DDSY, DY2, DGMTZ, HM}
and the references therein.
 Many important features of the classical $\BMO$ space are retained
by the new ${\rm BMO}_L$ spaces such as the John--Nirenberg inequality,   complex interpolation and
duality  (see \cite{DGMTZ, DY1, DY2, HM}).  To be more specific, Duong and Yan \cite[Theorem 3.1]{DY1} established   the John--Nirenberg inequality as follows:  $f\in {\rm BMO}_L(X)$ if and only if there exists $\varepsilon>0$ and $\lambda_0=\lambda_0(\varepsilon, f)$ such that
\begin{eqnarray}\label{e1.11}
	\sup_B { \mu(\{ x\in B: |f(x)-e^{-{r_B^2}L}f(x)|>\lambda\} )  \over \mu(B)}  \leq e^{-\lambda/\varepsilon}
\end{eqnarray}
whenever $\lambda>\lambda_0$.

Parallel to the classical $\BMO$,  it is direct to see that $L^{\infty}(X)\subsetneq{\rm BMO}_L(X)$. Thus,
we raise the following question: given  $f\in {\rm BMO}_L(X)$, what is its distance from the subspace $L^{\infty}(X)$ under the ${\rm BMO}_L(X)$ norm?

 In this article we will give comparable upper and lower bounds  for the distance
\begin{eqnarray}\label{e1.10}
	{\rm dist} (f, L^{\infty}):=	 \inf_{g\in L^{\infty}}  \|f -g\|_{{\rm BMO}_L(X)},
\end{eqnarray}
which are expressed in terms of one constant
in the John--Nirenberg inequality for the ${\rm BMO}_L(X)$ space.
 Define
$$
\varepsilon_L(f)=\inf \Big\{\varepsilon>0:   \eqref{e1.11} \ \  {\rm holds\ for\  all\ } \lambda\geq \lambda_0(\varepsilon, f)\Big\}.
$$
By the John--Nirenberg inequality \cite{DY1},
$f\in {\rm BMO}_L(X)$ if and only if there exists $\varepsilon >0$ such that \eqref{e1.11} holds, and   in fact, $\varepsilon=c\|f\|_{\BL(X)}$,  where $\lambda_0=c'\|f\|_{\BL(X)}$.
If $f\in L^{\infty}$, then $f$ satisfies \eqref{e1.11} for all $\varepsilon>0$, and thus $\varepsilon_L(f)=0$.

The first aim of this article is to establish  the following result.
  \begin{theorem}\label{th1.1}
  	Suppose that $L$ is an operator
  	on $L^2(X)$ satisfying \textbf{(H1)} and \textbf{(H2)}.
  	Let $  f\in  {\rm BMO}_L(X)$.  Then
 	there are constants $c_1$ and $c_2$  depending only on $C$ and $n$ in \eqref{doubling2}  such that
 	\begin{eqnarray}\label{e1.12}
 		c_1 \varepsilon_L(f)\leq \inf_{g\in L^{\infty}}  \|f -g\|_{\BL(X)}  \leq c_2 \varepsilon_L(f).
 	\end{eqnarray}
 \end{theorem}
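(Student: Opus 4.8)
The plan is to prove the two inequalities in \eqref{e1.12} separately, adapting the Garnett--Jones strategy from \cite{GJ1} but replacing the average $f_B$ by $e^{-r_B^2 L}f$ throughout, and handling the extra terms that arise because $e^{-tL}$ does not have compactly supported kernel and need not preserve constants.

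\emph{The easy direction: $c_1\varepsilon_L(f)\le {\rm dist}(f,L^\infty)$.} Fix $g\in L^\infty$ and set $h=f-g$. For any ball $B$ and any $\lambda$, split
\[
|f(x)-e^{-r_B^2L}f(x)|\le |h(x)-e^{-r_B^2L}h(x)|+|g(x)|+|e^{-r_B^2L}g(x)|\le |h(x)-e^{-r_B^2L}h(x)|+2\|g\|_\infty,
\]
using that $e^{-tL}$ is bounded on $L^\infty$ with norm $\le 1$ (a consequence of \textbf{(H2)} and the fact that $\int h_t(x,y)\,d\mu(y)\le C$; more carefully one uses the self-adjoint functional calculus to get norm exactly $1$, or absorbs the harmless constant). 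Hence if $\lambda>4\|g\|_\infty$, then $\{x\in B:|f-e^{-r_B^2L}f|>\lambda\}\subseteq\{x\in B:|h-e^{-r_B^2L}h|>\lambda/2\}$, and the John--Nirenberg inequality \eqref{e1.11} applied to $h$ (valid since $h\in{\rm BMO}_L(X)$ with $\|h\|_{\BL(X)}\le\|f-g\|_{\BL(X)}$, so one may take $\varepsilon=c\|f-g\|_{\BL(X)}$) gives the measure bound $e^{-\lambda/(2c\|f-g\|_{\BL(X)})}$ for $\lambda$ large. Therefore $\varepsilon_L(f)\le 2c\,\|f-g\|_{\BL(X)}$; taking the infimum over $g$ yields the claim with $c_1=1/(2c)$.

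\emph{The hard direction: ${\rm dist}(f,L^\infty)\le c_2\varepsilon_L(f)$.} Here one must, given that \eqref{e1.11} holds with some $\varepsilon>\varepsilon_L(f)$, produce a bounded function $g$ with $\|f-g\|_{\BL(X)}\lesssim\varepsilon$. Following Garnett--Jones, the idea is a stopping-time (Calder\'on--Zygmund-type) truncation: working over a fixed dyadic cube (using the random/adjacent dyadic lattices on the space of homogeneous type, as flagged in the abstract), for each large height one selects maximal stopping cubes where a suitable local oscillation of $f$ exceeds a threshold proportional to $\varepsilon$, and one defines $g$ by truncating/averaging $f$ on the complement of the bad set, summing the corrections over scales. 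The John--Nirenberg bound \eqref{e1.11} controls the measure of the bad set at each height geometrically, which makes the sum of corrections converge in $L^\infty$ with norm $O(\varepsilon)$, while on the good set $f$ agrees with $g$ up to $O(\varepsilon)$ oscillation.

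\emph{Main obstacle.} The genuinely new difficulty — and the reason for the ``geometric construction involving the semigroup $e^{-tL}$'' mentioned in the abstract — is that the oscillation in ${\rm BMO}_L$ is measured against $e^{-r_B^2L}f$, not against a cube average, so the classical bookkeeping (where passing from a cube to its dyadic parent changes the average by a controlled amount) must be replaced by estimates comparing $e^{-r_B^2L}f$ at nearby scales and centers. One needs: (i) a substitute for ``$|f_Q-f_{Q'}|\lesssim\|f\|$ for adjacent cubes'', namely a bound on $|e^{-s^2L}f(x)-e^{-t^2L}f(x')|$ when $s\sim t$ and $d(x,x')\lesssim s$, proved via the Gaussian bound \textbf{(H2)} and analyticity/time-derivative estimates $\|t\partial_t e^{-tL}\|_{\infty\to\infty}\lesssim 1$; and (ii) control of the ``tails'' of the heat kernel, i.e. the contribution to $e^{-r_B^2L}f(x)$ from $y$ far from $B$, which must be shown to be a bounded (indeed slowly varying) quantity on $B$ so that it can be shoved into $g$. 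Reconciling the stopping-time selection performed on honest dyadic cubes with the smooth, globally-supported averaging operator $e^{-tL}$ — ensuring the constructed $g$ is both genuinely in $L^\infty$ and makes $f-g$ have small ${\rm BMO}_L$ norm — is where the bulk of the technical work lies, and where the construction departs essentially from \cite{GJ1}.
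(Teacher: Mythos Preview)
Your easy direction is fine and matches the paper's one-line argument.

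For the hard direction, however, your sketch has a genuine gap at the point where you say ``the sum of corrections converge[s] in $L^\infty$ with norm $O(\varepsilon)$, while on the good set $f$ agrees with $g$ up to $O(\varepsilon)$ oscillation.'' In the classical Garnett--Jones argument the bad part is $h=\sum_Q c_Q\chi_Q$ with \emph{constants} $c_Q$, and one checks $\|h\|_{\BMO}\lesssim\varepsilon$ directly from the graded structure. Here the stopping-time decomposition produces
\[
h_{Q_0}(x)=\sum_Q a_Q(x)\chi_Q(x),\qquad a_Q(x)=e^{-\ell(Q)^2L}f(x)-e^{-\ell(Q^\ast)^2L}f(x),
\]
and the paper points out explicitly that this object need \emph{not} lie in ${\rm BMO}_L(X)$: the coefficients $a_Q$ are functions, not constants, they do not have compact support, and the sum oscillates too much. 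Your obstacle (i)--(ii) (comparing $e^{-s^2L}f$ at nearby scales/points, controlling tails) gives the size bounds $|a_Q|\lesssim\lambda$ on $2Q$ but does not by itself yield a ${\rm BMO}_L$ bound of order $\varepsilon$ for the sum.

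The paper closes this gap with three ingredients that are absent from your proposal. First, a refinement step (Proposition~\ref{mainth2}) inserts intermediate cubes to turn the $2^{-\lambda/\varepsilon}$-graded sequence into a $2^{-1}$-graded one, which manufactures the crucial factor $\varepsilon/\lambda$ in front of the graded sum. Second, the passage from a single dyadic system to the global decomposition is done by taking the \emph{expectation over the random dyadic lattice} $\omega\in\Omega$; this averaging (not merely the existence of a dyadic grid) is what tames the boundary effects coming from the cutoffs $\chi_{Q^\omega}$. Third, the ${\rm BMO}_L$ norm of $h$ is not estimated directly but by \emph{duality} $(H^1_L)^\ast={\rm BMO}_L$: one tests $h$ against $(1,\infty,2)$-atoms $a=L^2b$, and the estimate splits according to whether the graded cubes are smaller or larger than the atom's support, using sparseness for the small cubes and the derivative bound $|La_Q|\lesssim\ell(Q)^{-2}\lambda$ together with the random-lattice probability estimate \eqref{Rcondition} for the large ones. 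None of these three mechanisms --- the $\varepsilon/\lambda$ refinement, the expectation over $\omega$, or the duality testing against atoms --- appears in your outline, and without them the argument does not go through.
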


The main idea to prove this result is as follows:

(1) the first inequality  in \eqref{e1.12} follows from applying \eqref{e1.11} to $f-g$;

(2) the second inequality  in \eqref{e1.12} is proved by establishing the following result:  for
 every
$\varepsilon>\varepsilon(f)$,   there exist two positive constants $C_1$ and $C_2$ such that
 	\begin{align}\label{e1.13}
 	f(x)=g(x)+h(x)
 	\end{align}
 	with
 	$$
 	\|g\|_{L^\infty}\leq C_1\lambda_0 \ \ \ \ {\rm and}\ \  \ \
 	\|h\|_{\BL(X)}\leq C_2 \varepsilon.
 	$$
 However, if one attempts to establish \eqref{e1.13} by using the proof of Garnett--Jones \cite{GJ1, GJ2} for the classical  ${\rm BMO}$ case,   one immediately runs into essential difficulties. 	
 			Unlike the classical BMO space \cite{GJ2, JN},  we do not have a theory of  dyadic $\BMO_L$  		associated with operator $L$.	
Another  fact is  that under a dyadic system,   the standard  Calder\'on--Zygmund stopping time argument  gives  that for a large dyadic cube $Q_0$,
$$
\left(f(x)-e^{-\ell(Q_0)^2L}f(x)\right)\chi_{Q_0}(x)=g_{Q_0}(x)+h_{Q_0}(x),
$$
where $g_{Q_0}\in L^\infty$ and
\begin{align}\label{e1.133}
	h_{Q_0}(x)=\sum_Q a_Q(x)\chi_Q(x),
\end{align}
see Proposition~\ref{lem3.2} below.  The technical difficulty we encounter,  is that for the function $h_{Q_0}(x)$ in \eqref{e1.133}, the coefficient $a_Q(x)$ is a function,    the support of $a_Q(x)$ is  not compact and $h_{Q_0}(x)$   oscillates too much. So, the function $h_{Q_0}(x)$  does not belong to the space ${\rm BMO}_L(X)$. The    main idea of our proof    is to use the method of  random dyadic lattice method  and geometric  construction
from \cite{GJ2} (see also \cite{H, NTV}), and the fact that the the family of dyadic cubes   $\{ Q\}$ is  sparse  allows us to  add   more $L^\infty$ functions in   $h_{Q_0}(x)$  to lower the oscillation.    Our proof of~\eqref{e1.13}  will  be   done constructively in  Sections   3, 4  and 5 below.

The second    aim of this article is  to use the construction
 in proving \eqref{e1.12} to
give  a Carleson's characterization of the ${\rm BMO}_L$
space, which extends the result for the classical John--Nirenberg $\BMO$ space due to   Carleson \cite{C} in 1976.
Recall that a
  measure $\sigma$ defined on $X\times {\mathbb R^+}
 $ is said to be a
Carleson measure if there is a positive constant $c_{\sigma}$ such that
for each ball $B$
on $X$,
$$
\sigma({\widehat B})\leq c_{\sigma}\mu(B),
$$
where ${\widehat B}=\{(x,t)\in X\times {\mathbb R^+}: x\in B, \
0<t<r_B\}$, and $r_B$
is the radius of the ball $B$.
\iffalse
Consider the function
\begin{eqnarray}\label{e1.10}
	S_{\mu, L}(x)=\int_{X\times (0, \infty)}  h_{t}(x,y) d\mu(y, t),
\end{eqnarray}
where $\mu$ is a Borel measure on $X\times (0, \infty)$.
It is easy to see that if $\mu$
is finite then the integral in \eqref{e1.10} -- called the sweep or balayage of $\mu$ with respect $L$
-converges absolutely for a.e. $x\in X$, and   $\| S_{\mu, \, L}\|_1\leq C(n)\|\mu\|.$
\fi
Our   result can be stated as in the following.

\begin{theorem}\label{th1.3}   Suppose that $L$ is an operator
on $L^2(X)$ satisfying \textbf{(H1)} and \textbf{(H2)}.
	Suppose  that $f\in {\rm BMO}_L(X)$ has compact support. Then there exist $g\in L^\infty(X)$ and a finite Carleson measure $\sigma$ such that
	$$
	f(x)=g(x)+\int_{X\times [0,\infty)} h_{t^2}(x,y)\,d\sigma(y,t),
	$$
	in which $h_{t^2}(x,y)$ is the function in condition \textbf{(H2)} and $\|g\|_\infty+\|\sigma\|_{\mathcal C}\leq C\|f\|_{\BL(X)}$.
\end{theorem}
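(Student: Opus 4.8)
The plan is to bootstrap Theorem~\ref{th1.3} directly from the decomposition $f = g + h$ with $\|g\|_\infty \le C_1\lambda_0$ and $\|h\|_{\BL(X)} \le C_2\varepsilon$ that underlies the second inequality in \eqref{e1.12}, together with the explicit constructive form of $h$. The point is that the function $h$ produced in Sections~3--5 is not merely an abstract element of $\BL(X)$: it is built out of the semigroup $e^{-tL}$ evaluated at a sparse family of scales attached to the random dyadic lattice, so it should already be expressible as an integral of the heat kernel against a measure supported on $X\times[0,\infty)$. Concretely, since $f$ has compact support, say $\supp f \subseteq B_0$, I would first choose a large ball $B$ (a dyadic cube $Q_0$ in the random lattice, of side-length comparable to $r_{B_0}$ plus the "tail radius" of $f$) containing $B_0$, apply the stopping-time/geometric construction to $(f - e^{-\ell(Q_0)^2 L}f)\chi_{Q_0}$, and then handle the remaining piece $e^{-\ell(Q_0)^2 L}f$ separately.

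The key steps, in order, are as follows. First, I would record that $e^{-\ell(Q_0)^2 L}f = \int_X h_{\ell(Q_0)^2}(\cdot,y) f(y)\,d\mu(y)$ is itself of the desired form: it is $\int_{X\times[0,\infty)} h_{t^2}(x,y)\,d\nu_1(y,t)$ where $\nu_1 = f(y)\,d\mu(y)\otimes \delta_{t=\ell(Q_0)}$, a finite measure (because $f$ is compactly supported and locally integrable) whose Carleson norm over a ball $B$ is controlled by $\int_B |f|\,d\mu \le \mu(B)\,(\|f\|_{\BL} + |B_0|^{-1}\int_{B_0}|f|)$-type quantities; here one uses that $f\in\BL(X)$ with compact support forces $f\in L^1(B_0)$ with $L^1$-average bounded by $C\|f\|_{\BL(X)}$ after subtracting the (bounded, compactly controlled) term $e^{-r_{B_0}^2 L}f$. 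Second, I would unwind the definition of $h = h_{Q_0}$ plus the added $L^\infty$ corrections: each building block has the shape (constant)$\cdot(e^{-s_Q L} - e^{-t_Q L})$ applied to a localized piece, or $(I - e^{-s_Q L})$ of a localized piece, so summing over the sparse family $\{Q\}$ and writing $I - e^{-sL} = \int$ of the generator, or more simply writing each $e^{-sL}g_Q$ directly, yields $h = g' + \int_{X\times[0,\infty)} h_{t^2}(x,y)\,d\sigma_0(y,t)$ where $g'\in L^\infty$ absorbs the "non-semigroup" residue and $\sigma_0$ is a measure assembled from the stopping-time data. Third — the crucial estimate — I would show $\sigma_0$ is Carleson with $\|\sigma_0\|_{\mc C} \lesssim \|h\|_{\BL(X)} \lesssim \varepsilon \lesssim \|f\|_{\BL(X)}$: the sparseness of $\{Q\}$ (each $Q$ carrying a bounded overlap and geometric decay in generations) gives exactly the packing condition $\sum_{Q\subseteq B}(\text{mass of }Q) \lesssim \mu(B)$ that is the Carleson condition, while finiteness of $\sigma$ follows from compact support of $f$ truncating the construction at a largest scale $\sim \ell(Q_0)$. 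Finally I would combine: set $g = (\text{bounded part of the }(f-e^{-\ell(Q_0)^2L}f)\text{ decomposition}) + g'$, absorb all $L^\infty$ pieces, and let $\sigma = \nu_1 + \sigma_0$, obtaining $f = g + \int h_{t^2}\,d\sigma$ with $\|g\|_\infty + \|\sigma\|_{\mc C} \le C\|f\|_{\BL(X)}$.

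The main obstacle I anticipate is Step two and three: making precise that the geometric construction of $h$ from Sections~3--5 — which was engineered to control the $\BL(X)$ norm via the random lattice and the "adding $L^\infty$ functions to lower oscillation" trick — actually delivers a measure $\sigma_0$ on $X\times[0,\infty)$ with a genuine Carleson packing bound, rather than just a $\BL$ bound. The subtlety is that the $t$-variable (scale) of each contribution must be correctly identified with the stopping-time scale $\ell(Q)$ or $s_Q$, and the support of each piece must sit inside $\widehat{3Q}$ or a comparable Carleson box, so that bounded overlap of the sparse family translates into $\sigma_0(\widehat B)\lesssim \mu(B)$; one must also verify that the $L^\infty$-residue $g'$ collected from the differences $(I-e^{-sL})$ acting on the coefficient functions is genuinely bounded, which is where the stopping-time bound $|a_Q|\lesssim \lambda_0$ and the Gaussian bound \textbf{(H2)} (giving $\|e^{-sL}\|_{L^\infty\to L^\infty}\le C$) enter. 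I would handle the finiteness of $\sigma$ (as opposed to just local Carleson-ness) by exploiting compact support of $f$ twice: it bounds the tail in the $\nu_1$ piece, and it forces the stopping-time tree for $\sigma_0$ to have top cube of bounded size, so $\|\sigma\| = \sigma(X\times[0,\infty)) < \infty$ follows from the global packing sum over that finite tree. The rest of the argument is bookkeeping: collecting constants and invoking \eqref{e1.12}/\eqref{e1.13} to bound $\varepsilon$ by $\|f\|_{\BL(X)}$ via the John--Nirenberg inequality of \cite{DY1}.
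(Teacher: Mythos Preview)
Your plan has a genuine gap at the heart of Steps~2--3: you assume that once you extract the ``semigroup part'' of $h$ as $\int h_{t^2}\,d\sigma_0$, the residue $g'$ lands in $L^\infty$. It does not. If you define $\sigma_0$ in the natural way --- place the building block $f_k = \E_\omega \sum_{\ell(Q^\omega)=2^k} a_{Q^\omega}\chi_{Q^\omega}$ as a density on the slice $\{t=\delta 2^k\}$ --- then $\int h_{t^2}\,d\sigma_0 = \sum_k e^{-\delta^2 2^{2k}L}f_k$, and the residue is
\[
h - \int h_{t^2}\,d\sigma_0 \;=\; \sum_{k}\bigl(I-e^{-\delta^2 2^{2k}L}\bigr)f_k.
\]
This residue is \emph{not} bounded: the cutoffs $\chi_{Q^\omega}$ inside $f_k$ prevent any cancellation from the ``smoothness'' of $a_{Q^\omega}$, and the sum over $k$ has no reason to stay in $L^\infty$. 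Your sentence ``the $L^\infty$-residue $g'$ \ldots\ is genuinely bounded, which is where the stopping-time bound $|a_Q|\lesssim\lambda_0$ and the Gaussian bound enter'' is exactly where the argument fails --- those ingredients give you boundedness of each summand, not of the sum.

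What the paper does instead is an \emph{iteration}. One shows (Lemmas~\ref{lem5.1}--\ref{lem5.3}, which together occupy roughly four pages of duality estimates against $(1,\infty,1)$-atoms) that for $\delta$ small enough the residue satisfies
\[
\Bigl\|\sum_{k}\bigl(I-e^{-\delta^2 2^{2k}L}\bigr)f_k\Bigr\|_{\BL(X)} \;\le\; \tfrac12\,\|f\|_{\BL(X)},
\]
and moreover that its restriction to $(2Q_{\max})^c$ is small in $L^\infty$ (estimate~\eqref{e4.1+}). So the residue is again a compactly-supported $\BL$ function with half the norm, and one reapplies the whole decomposition to it, summing a geometric series in Carleson norms. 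The Carleson bound you sketch in Step~3 (Lemma~\ref{le6.1}) is the easy part; the hard part is precisely the $\BL$ control of the residue that enables iteration, and your proposal skips it entirely. Two smaller points: the paper uses Proposition~\ref{lem3.2} (not the refined Proposition~\ref{mainth2}) as the input here with $\varepsilon=\lambda=C_0\|f\|_{\BL}$, and it disposes of the top-scale term $e^{-\ell(Q_0)^2L}f$ by taking the initial cube large enough that this term is uniformly small (hence absorbed into $g$), rather than by encoding it as your measure~$\nu_1$.
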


Comparing our Theorem \ref{th1.3} to known results of the classical $\BMO(\RR)$ spaces, we note  that Carleson showed in 1976 that for
every $f$ in the classical $\BMO(\RR)$ with compact support,  there exist $g\in L^\infty(\R^n)$ and a finite Carleson measure $\sigma$ such that
$$
f(x)=g(x)+\int_{\R^n\times [0,\infty)} t^{-d}K((x-y)/t)\,d\sigma(y,t)
$$
with $\|g\|_{L^\infty}+\|\sigma\|_{\mathcal C}\leq C\|f\|_{\BMO}$,
where  the kernel $K(x)$ is in $L^1(\R^n)$ and is assumed to satisfy $\int K(x)dx=1$,  $|K(x)|  \leq (1+|x|)^{-n-1}$ and  $|\nabla K(x)|\leq (1+|x|)^{-n-2}$,
see also the work of A. Uchiyama \cite{U}, and   of Garnett--Jones \cite{GJ2}.  The proofs in \cite{C, GJ2, U}
are based on an iteration argument.
In \cite{W} Wilson  gave  a new proof of Theorem~\ref{th1.3} when $K$ is the Poisson kernel, i.e,
\begin{eqnarray}\label{ep}
	p(x)=c_n { (1 +|x|^2)^{-{n+1\over 2}}}, \ \ \ \ c_n={\Gamma\Big({n+1\over 2}\Big) \over \pi^{(n+1)/2}}
\end{eqnarray}
by using the Poisson semigroup property  and Green's theorem
to avoid the iteration and made the construction much more explicit.
Recently,   Theorem~\ref{th1.3}  was proved in \cite{CDLSY} in the special case that when
$L=-\Delta+V$
is a Schr\"odinger operator  on $L^2(\mathbb R^n)$, $n\geq3$, where the nonnegative
potential $V$ belongs to  the reverse H\"older class    $B_q$ for some $q\geq n.$

\medskip

\noindent
{\bf 1.3. Comments on the results and methods of the proof.} \
In our Theorems~\ref{th1.1} and ~\ref{th1.3},  we assume only an upper
bound on  the  kernel $h_t(x,y)$ of $e^{-tL}$. There is no regularity condition on spatial variables $x$ or $y$
(e.g.  H\"older continuity or H\"ormander condition
on the space variables $x$ or $y$). Another feature of our results
is that we do not assume the conservation property of the semigroup
 \begin{eqnarray*}%\label{e1.133}
	e^{-tL}1  =1, \ \ \ t>0,
\end{eqnarray*}
thus the classical  $\BMO$  theory \cite{GJ1, GJ2} are not applicable
 in the proofs of Theorems~\ref{th1.1} and ~\ref{th1.3}. 
The lacking of regularity on the spatial variables and conservation property  is indeed the  main obstacle in this article and makes our theory quite subtle and delicate.

Our method allows Theorems~\ref{th1.1} and ~\ref{th1.3} to be  applicable
to a larger class of  operators $L$, for example $L = \mathcal{L} + V$ with a general non-negative potential $V$ on
a doubling space $X$ whenever $\mathcal{L}$ is an operator which satisfies conditions \textbf{(H1)} and \textbf{(H2)}.
It is now understood that there are important examples where the
classical $\BMO$ theory is not applicable, and these situations, being tied to
the theory of partial differential operators beyond the Laplacian, return us in
some sense to
  the original point of view of \cite{JN}. That is, we shall consider the
space ${\rm BMO}_L(X)$  adapted to a linear operator $L$, in much the same way that the
classical $\BMO$ space  is adapted to the Laplacian. On the other hand, the real variable techniques of \cite{FS, GJ1, GJ2, JN, St2} are still of
fundamental importance to us here.

 The layout of the article is as follows. In Section 2 we recall  random dyadic lattices on spaces of homogeneous type. The proof of Theorem~\ref{th1.1} will be given
 in  Sections   3, 4  and 5.  The proof of Theorem~\ref{th1.3} will be given in Section 6.
 Finally, we conclude this article in Section 7 by
giving some remarks and comments
 for further study related to Theorems~\ref{th1.1} and \ref{th1.3} on the space ${\rm BMO}_L(X)$ associated with operators.

\medskip

\section{Random dyadic lattices on spaces of homogeneous type}
\setcounter{equation}{0}

\iffalse
 Throughout the paper, we denote by $Q$ the dyadic cube and $B$ the ball on $(X,d,\mu)$.
 We denote by $\lambda B$ the ball with radius $\lambda$ times that of $B$ and the same center as $B$.
 Moreover, we also denote by $\lambda Q$ the ball, not a dyadic cube, with  radius $\lambda$ times the side-length  of $Q$ and the same center as $Q$. Detail of the definition will be provided in Section 2.
 \fi

In this  section we recall the dyadic cubes in spaces of
homogeneous type, and the random dyadic lattices of cubes.
 As shown in~\cite{HK, HT}, building
on~\cite{Chr}, there exists a dyadic system $\D$ for this
space~$X$. There exist positive absolute constants $c_1$, $C_1$
and $0<\delta<1$ such that we can construct a set of points
$\{x_\alpha^k\}_{k,\alpha}$ and families of sets
$\{Q_\alpha^k\}_{k,\alpha}$ in $X$ satisfying the following
properties:
\begin{eqnarray}
    &&\mbox{if } \ell\leq k, \mbox{ then either } Q_\alpha^k\subset
        Q_\beta^\ell \mbox{ or } Q_\alpha^k
        \cap Q_\beta^\ell=\emptyset;\label {DyadicP1}\\
    &&\mbox{for every } k\in \Z \mbox{ and } \alpha\neq\beta,  Q_\alpha^k\cap
       Q_\beta^k=\emptyset; \label {DyadicP2}\\
    &&\mbox{for every } k\in \Z,\,
        X=\cup_\alpha Q_\alpha^k;\label {DyadicP3} \\
    && B(x_\alpha^k,c_1\delta^k)\subset Q_\alpha^k\subset
        B(x_\alpha^k,C_1\delta^k);\label {prop_cube4}\\
    &&\mbox{if } \ell\leq k \mbox{ and } Q_\alpha^k\subset Q_\beta^\ell,
        \mbox{ then } B(x_\alpha^k,C_1\delta^k)\subset
        B(x_\beta^\ell,C_1\delta^\ell).\label {DyadicP5}
\end{eqnarray}
Here for each $k\in\mathbb{Z}$, $\alpha$ runs over an
appropriate index set. We call the set $Q_\alpha^k$ a
\emph{dyadic cube} and $x_\alpha^k$ the \emph{center} of the
cube. Also, $k$ is called the \emph{level} of this cube. We
denote the collection of dyadic cubes at level $k$ by $\D^k$,
and the collection of all dyadic cubes by~$\D$. When
$Q_\alpha^k\subset Q_\beta^{k-1}$, we say $Q_\alpha^k$ is a
child of $Q_\beta^{k-1}$ and $Q_\beta^{k-1}$ is the parent
of~$Q_\alpha^{k}$. Because $X$ is a space of homogeneous type,
there is a uniform constant~$\mathcal{N}$ such that each cube
$Q\in\D$ has at most~$\mathcal{N}$ children.

Denote
$$
E_j(Q_\alpha^k):=\big\{Q_\beta^{k+j}\subset Q_\alpha^k: d(Q_\beta^{k+j}, X\backslash Q_\alpha^k)\leq C\delta^{k+j}\big\}
$$
and
$$
e_j(Q_\alpha^k):=\big\{x:x\in Q_\beta^{k+j}\, \mbox{for\, some}\, Q_\beta^{k+j}\in E_j(Q_\alpha^k)\big\}.
$$
We have the following fact (\cite[p. 12, Proof of (3.6)]{Chr}) that there exists an absolute constant $\eta>0$ such that
\begin{align}\label{annulars1}
\mu(e_j(Q_\alpha^k))\leq C\delta^{\eta j} \mu(Q_\alpha^k).
\end{align}

Denote by $Q_{\alpha,t}^k$  the  annulus of a dyadic cube $Q_\alpha^k$, that is
$Q_{\alpha,t}^k:=\{x\in X\backslash Q_\alpha^k: d(x, Q_\alpha^k)\leq t\delta^k\}$. Then it follows from \eqref{annulars1} and the doubling property that
\begin{align}\label{annulars2}
\mu(Q_{\alpha,t}^k)\leq Ct^\eta \mu(Q_\alpha^k).
\end{align}
Actually note that the set $\{x\in Q_\alpha^k: d(x,Q_\alpha^k)\leq t\}\subset e_j(Q_\alpha^k)$, see see the proof of (3.6) in \cite[p. 12]{Chr}. Then $Q_{\alpha,t}^k$ can be covered by the union of $10 Q_\beta^{k+j}$ where $Q_\beta^{k+j}\subset E_j(Q_\alpha^k)$ for $\delta^j\sim t$. By the doubling property,
$$
\mu(Q_{\alpha,t}^k)\leq C10^n \sum_{\beta} \mu(Q_\beta^{k+j})\leq C \mu(e_j(Q_\alpha^k))\leq Ct^\eta \mu(Q_\alpha^k).
$$

In~\cite[Theorem 5.2]{HT} Hyt\"onen and Tapiola constructed
random dyadic lattices on spaces of homogeneous type, extending
an earlier result of Nazarov, Treil and Volberg~\cite{NTV} and Hyt\"onen and Kairema~\cite{HK}.
Specifically, there exists a probability space $(\Omega,\PP)$
such that for each $\omega\in\Omega$ there is an associated
dyadic lattice $\D(\omega) = \{Q_\alpha^k(\omega)\}_{k,\alpha}$
related to dyadic points $\{x_\alpha^k(\omega)\}_{k,\alpha}$,
with the properties \eqref{DyadicP1}--\eqref{DyadicP5} above,
and the following property holds: there exist absolute
constants $C$, $\eta > 0$ such that
\begin{eqnarray}\label {Rcondition}
    \PP\left(\{\omega\in\Omega: x\in \bigcup_{\alpha} \partial_\varepsilon Q_\alpha^k(\omega) \}\right)
    \leq C\left(\frac{\varepsilon} {\delta^k}\right)^\eta
\end{eqnarray}
for all $x\in X,k\in \mathbb Z$ and $\varepsilon>0$. Here $\partial_\varepsilon A$ denotes the $\varepsilon$-boundary of a set $A\subset X$ defined by
$$
\partial_\varepsilon A:=\{x\in A: d(x,A^c)<\varepsilon\}\cup \{x\in A^c: d(x,A)<\varepsilon\}.
$$
Let $\E_\omega$ denote the expectation taken over $\omega\in \Omega$ of a family of functions $f^{(\omega)}$ indexed by $\omega\in \Omega$
$$
\E_\omega f^\omega(x)=\int_\Omega f^\omega(x)d\omega.
$$

Throughout the paper, for the sake of simplification of the natation, we always assume $\delta=2^{-1}$ and all the proofs can be checked directly for other $\delta$.
\bigskip

\section{Proof of Theorem~\ref{th1.1}: step one}
\setcounter{equation}{0}

In this section we will apply   the  John--Nirenberg inequality \eqref{e1.11}
  to build
  a decomposition   of $f\in {\rm BMO}_L(X)$ under a dyadic system
  by using a stopping time argument which resembles the proof of the John--Nirenberg theorem \cite{DY1}.

To state our   result,  we need to introduce the following {\em graded sequence} and  {\em graded function}.
Let $0<\gamma<1$. A sequence
$$
E_1\supset E_2\supset \ldots \supset E_k\supset \ldots
$$
is said to be $\gamma$-graded, if
\begin{itemize}
	\item[(1)]
 Each $E_k$ is the union of some disjoint dyadic cubes, say $E_k=\cup_j Q_j^{(k)}$;

	\item[(2)] Each $Q_j^{(k)}$ must be contained in some $Q_{j'}^{(k-1)}$;

	\item[(3)] For any $Q_j^{(k-1)}$, $|E_k \cap Q_j^{(k-1)}|\leq \gamma |Q_j^{(k-1)}|$.
\end{itemize}
 With conditions above, the $\gamma$-graded function is the form
$$
h(x)=\sum_k \sum_j a_{Q_j^{(k)}}(x)\chi_{Q_j^{(k)}}(x).
$$

In this section, we prove  the following result.

\begin{proposition}\label{lem3.2}
Fixed a dyadic system $\mathcal D$ and large dyadic cube $Q_0\in \mathcal D$. There is a constant $C$, depending only on the doubling constants in \eqref{doubling2} and the Gaussian upper bound, such that if $f\in \BL(X)$, $\varepsilon>\varepsilon_L(f)$ and $\lambda>\lambda_0(\varepsilon,f)$, then
$$
\big(f(x)-e^{-\ell(Q_0)^2L}f(x)\big)\chi_{Q_0}(x)=g_{Q_0}(x)+h_{Q_0}(x)
$$
with
$$
\|g_{Q_0}\|_{L^\infty}\leq C\lambda
$$
and
$$
 h_{Q_0}(x)=\sum_Q a_{Q}(x)\chi_{Q}(x),
$$
where $Q$ runs over a $\gamma$-graded sequence with $\gamma=2^{-\lambda/\varepsilon}$ and $a_{Q}(x)$ satisfies that
for almost all $x\in 2Q$,
%
%i) for almost all $x\in 2Q$,
\begin{align}\label{size condition3.1}
|a_{Q}(x)|\leq C\lambda,
\end{align}
and
\begin{align}\label{smooth condition3.3}
|La_Q(x)|\leq C\ell({Q})^{-2}\lambda.
\end{align}

\end{proposition}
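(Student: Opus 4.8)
The plan is to imitate the classical Calderón--Zygmund stopping-time proof of the John--Nirenberg inequality, but applied to the function $F(x):=\big(f(x)-e^{-\ell(Q_0)^2 L}f(x)\big)\chi_{Q_0}(x)$ inside the fixed large dyadic cube $Q_0$, and to be careful to record that the resulting "bad part'' is a $\gamma$-graded function whose pieces satisfy both a pointwise size bound and the smoothness bound \eqref{smooth condition3.3}. First I would run the stopping time: starting from $Q_0$, select maximal dyadic subcubes $Q\subset Q_0$ for which the local oscillation quantity $\frac{1}{\mu(Q)}\int_Q |f(x)-e^{-\ell(Q)^2 L}f(x)|\,d\mu(x)$ (or, more precisely, a quantity of the form $\frac{1}{\mu(Q)}\int_Q |f - e^{-\ell(Q)^2L}f|$ measured against the value on the parent) exceeds a large multiple of $\lambda$. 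By maximality the unselected cubes carry the bound $\lesssim \lambda$, which will produce $g_{Q_0}$; the union of selected cubes at the first generation forms $E_1$, iterating inside each selected cube gives $E_2\supset E_3\supset\cdots$. The key point that the sequence is $\gamma$-graded with $\gamma = 2^{-\lambda/\varepsilon}$ is exactly where the John--Nirenberg inequality \eqref{e1.11} for $\BL(X)$ enters: since $\varepsilon>\varepsilon_L(f)$ and $\lambda>\lambda_0(\varepsilon,f)$, the measure of the set where $|f-e^{-r_B^2L}f|>\lambda$ inside any ball is at most $e^{-\lambda/\varepsilon}=2^{-\lambda/\varepsilon}$ times the measure of the ball, and a comparison of the selected subcubes of a given $Q$ with such a sublevel set over a ball comparable to $Q$ yields condition (3) in the definition of $\gamma$-graded.

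Next I would extract the decomposition $F = g_{Q_0} + h_{Q_0}$ explicitly. On the "good'' region $Q_0\setminus E_1$ and, recursively, on $Q_j^{(k)}\setminus E_{k+1}$, the contribution of $f - e^{-\ell(Q_0)^2L}f$ differs from a bounded term only by telescoping differences $e^{-\ell(Q_j^{(k)})^2 L}f - e^{-\ell(Q_{j'}^{(k-1)})^2 L}f$ of heat-semigroup averages at consecutive stopping scales; these telescoping differences are the $a_Q(x)$, i.e. $a_Q(x) = e^{-\ell(\widehat Q)^2 L}f(x) - e^{-\ell(Q)^2 L}f(x)$ where $\widehat Q$ is the selected ancestor of $Q$ (or $Q_0$ at the top level), and $g_{Q_0}$ collects the genuinely bounded leftovers plus the value on $Q_0$. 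The pointwise bound \eqref{size condition3.1} for $x\in 2Q$ follows because $a_Q$ is a difference of averages of $f$ at scales between $\ell(Q)$ and $\ell(\widehat Q)$, each of which is controlled by the stopping condition on the parent of $Q$ together with the Gaussian bound \textbf{(H2)}: one writes $e^{-t L}f(x) - e^{-sL}f(x) = \int_s^t \frac{d}{du} e^{-uL}f(x)\,du$ or directly estimates $|e^{-\ell(Q)^2L}f(x) - (f)_{\widehat Q}^{L}|$ using that $x\in 2Q$ lies well inside $\widehat Q$, so the Gaussian tails of $h_{\ell(Q)^2}(x,\cdot)$ outside $\widehat Q$ contribute only an $O(\lambda)$ error via the $\mathcal M_{x_0,\beta}$ membership and the $\BL$-normalization. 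The smoothness bound \eqref{smooth condition3.3}, $|La_Q(x)|\le C\ell(Q)^{-2}\lambda$, comes from applying $L$ to $a_Q = (e^{-\ell(\widehat Q)^2L} - e^{-\ell(Q)^2L})f$ and using the analyticity of the semigroup: $L e^{-\ell(Q)^2 L} = \ell(Q)^{-2}\cdot \big(\ell(Q)^2 L e^{-\ell(Q)^2 L}\big)$, and $\ell(Q)^2 L e^{-\ell(Q)^2 L}$ has a kernel with the same Gaussian-type bound (with an extra harmless polynomial factor) as $e^{-\ell(Q)^2 L}$, by the self-adjoint functional calculus and \textbf{(H2)}; hence $|Le^{-\ell(Q)^2L}f(x)|\lesssim \ell(Q)^{-2}\lambda$ on $2Q$ by the same local argument, and similarly (with a gain, since $\ell(\widehat Q)\ge\ell(Q)$) for the $e^{-\ell(\widehat Q)^2L}$ term.

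The main obstacle I anticipate is precisely the passage from "averages $e^{-tL}f$ at the two stopping scales are close to each other and to the stopping value'' to the two clean bounds on $2Q$ — rather than merely on $Q$ — without any regularity of the heat kernel in the spatial variables and without the conservation property $e^{-tL}1=1$. Extending control from $Q$ to the slightly larger $2Q$ is needed so that the pieces $a_Q\chi_Q$ can later be modified by adding $L^\infty$ functions supported near $Q$ (as announced in the introduction), and it forces one to estimate $e^{-\ell(Q)^2 L}f(x)$ for $x\in 2Q\setminus Q$ by splitting the integral over $\widehat Q$ and its complement: the near part is handled by the stopping condition on the parent of $\widehat Q$, and the far part by the Gaussian decay integrated against the weight defining $\mathcal M_{x_0,\beta}$, using $d(x, X\setminus \widehat Q)\gtrsim \ell(Q)$ since $Q$ is strictly interior to its selected ancestor. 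Carrying the same splitting through for $Le^{-\ell(Q)^2L}f$ and $\ell(Q)^2L^2 e^{-\ell(Q)^2 L}f$ (to get \eqref{smooth condition3.3}) is routine once the kernel bounds for $(\ell(Q)^2L)^k e^{-\ell(Q)^2L}$ are in hand, and I would record those kernel bounds as a preliminary lemma invoking only \textbf{(H1)}, \textbf{(H2)} and the doubling property \eqref{doubling2}.
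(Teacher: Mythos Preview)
Your overall architecture matches the paper's: a Calder\'on--Zygmund stopping time inside $Q_0$, telescoping $f-e^{-\ell(Q_0)^2L}f$ along the selected generations, with $a_Q(x)=e^{-\ell(Q)^2L}f(x)-e^{-\ell(Q^\ast)^2L}f(x)$ (where $Q^\ast$ is the selected ancestor). The decomposition into $g_{Q_0}$ and $h_{Q_0}$ you describe is exactly (3.8)--(3.9).

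However, there is a genuine gap in your argument for the pointwise bound on $2Q$. Your claim that ``$d(x,X\setminus\widehat Q)\gtrsim \ell(Q)$ since $Q$ is strictly interior to its selected ancestor'' is simply false: a selected $Q$ can touch the boundary of $\widehat Q=Q^\ast$, and $2Q$ can spill outside $Q^\ast$. The paper's remedy is to build the $2Q$-control into the stopping rule itself: one selects maximal $Q_k\subset Q_{k-1}$ for which
\[
\frac{1}{\mu(2Q_k)}\int_{2Q_k}\big|f-e^{-\ell(Q_{k-1})^2L}f\big|\,d\mu>\lambda
\]
(note the average over $2Q_k$, against the parent's scale). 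Then Lemma~B-k bounds $|a_{Q_k}(x)|$ on $2Q_k$ by splitting the integral $e^{-\ell(Q_k)^2L}(f-e^{-\ell(Q_k^\ast)^2L}f)(x)$ into (i) the near piece over $2Q_k$, controlled by the stopping condition itself; (ii) intermediate annuli $2\widetilde{Q_k}^{\,i}\setminus 2\widetilde{Q_k}^{\,i-1}$, where the unselected ancestors give the $\lambda$-bound; and (iii) the far piece over $X\setminus 2\widetilde{Q_k}^{\,i_0}$, handled not via the $\mathcal M_{x_0,\beta}$ weight but via the BMO telescoping estimate $|(e^{-tL}-e^{-KtL})f|\le c(1+\log K)\|f\|_{\BL(X)}$. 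Your use of the $\mathcal M_{x_0,\beta}$ weight for the tail would produce a bound depending on that norm, not on $\lambda$.

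Two smaller points. First, the $\gamma$-graded property is not a direct consequence of the distributional John--Nirenberg inequality, because the selected cubes are defined by an \emph{average} exceeding $\lambda$, not a pointwise value; the paper bridges this via Jensen's inequality (Lemma~C-k): if $\frac{1}{\mu(E)}\int_E|f-e^{-\ell(Q)^2L}f|>\lambda$ then $\mu(E)\le C_{\varepsilon'}e^{-\lambda/\varepsilon'}\mu(Q)$, and a Vitali argument on the doubled cubes finishes. Second, for the smoothness bound \eqref{smooth condition3.3} you try to bound $|Le^{-\ell(Q)^2L}f(x)|$ and $|Le^{-\ell(\widehat Q)^2L}f(x)|$ separately; this is not obviously $O(\ell(Q)^{-2}\lambda)$ for $f\in\BL(X)$. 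The paper instead proves (Lemma~3.6) the difference estimate $|(tL)(e^{-tL}-e^{-KtL})f(x)|\le C\|f\|_{\BL(X)}$ uniformly in $K\ge1$, which immediately yields $|La_Q|\le C\ell(Q)^{-2}\lambda$.
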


\begin{proof}
	Let  $f\in \bmo$.  It follows from  \cite[Proposition 2.6]{DY1} that  for every $t>0$ and every $K\geq 1$, there exists a constant
	$c>0$ such that for almost all $x\in X$,
	\begin{eqnarray}\label{bmo1}
		 |(e^{-tL}-e^{-KtL})f(x)|\leq c(1+\log K)\|f\|_{\bmo}.
	\end{eqnarray}
Now we fix the constant $\lambda>C\|f\|_{\bmo}$ such  that
\begin{align*}
	 &\frac{1}{\mu(2Q_0)}\int_{2Q_0}|f(y)-e^{-\ell(Q_0)^2L}f(y)|d\mu(y)\\
	&\leq \frac{1}{\mu(2Q_0)}\int_{2Q_0}\Big(|f(y)-e^{-4\ell(Q_0)^2L}f(y)|+|e^{-4\ell(Q_0)^2L}f-e^{-\ell(Q_0)^2L}f|\Big)d\mu(y)\\
	&\leq C\|f\|_{\bmo}\\
	&<\lambda.
\end{align*}
Then we make the Calder\'on--Zygmund decomposition of $Q_0$ for the function
$(f-e^{-\ell(Q_0)^2L}f)$  relative to $\lambda$.  We first obtain dyadic cubes $Q_1$ which are maximal dyadic subcubes of
$Q_0$ satisfying
\begin{eqnarray}\label{selection0}
 \lambda <	 \frac{1}{\mu(2Q_1)}\int_{2Q_1}|f(y)-e^{-\ell(Q_0)^2L}f(y)|d\mu(y).
\end{eqnarray}
Denote by
$$
\mathcal{G}_1:=\{Q_1\in \D: Q_1\subset Q_0, {\rm maximal\, and\, satisfies\, \eqref{selection0}}\}.
$$
So we have that
\begin{align}\label{de}
	(f-e^{-\ell(Q_0)^2L}f)\chi_{Q_0}
	%&=(f-e^{-\ell(Q_0)^2L}f)\chi_{Q_0\backslash\cup_{j^{(1)}} Q_{j^{(1)}}}+ \sum_{{j^{(1)}}} %(f-e^{-\ell(Q_0)^2L}f) \chi_{Q_{j^{(1)}}}\nonumber\\
	 &=\left((f-e^{-\ell(Q_0)^2L}f)\chi_{Q_0\backslash\cup_{Q_1\in\G_1} Q_1} +
\sum_{Q_1\in\G_1} (e^{-\ell(Q_1)^2L}-e^{-\ell(Q_0)^2L}f) \chi_{Q_1}\right)\nonumber\\
	&\quad + \sum_{Q_1\in\G_1} (f-e^{-\ell(Q_1)^2L}f) \chi_{Q_1}.
\end{align}
For each $Q_1\in\G_1$   in \eqref{de} above, we make the Calder\'on--Zygmund decomposition for the
function $(f-e^{-\ell(Q_1)^2L}f)$ relative to $\lambda$. Thus we obtain a family
of dyadic cubes $\{Q_2\}$ which are  subcubes of $Q_1$ and satisfy
\begin{eqnarray}\label{selection1}
	\lambda <	 \frac{1}{\mu(2Q_2)}\int_{2Q_2}|f(y)-e^{-\ell(Q_1)^2L}f(y)|d\mu(y).
\end{eqnarray}
Now we put
together all the families $Q_2$  corresponding to different  $Q_1'$s to a new set $\G_2$. That is,
$\mathcal{G}_2:=\{Q_2\in\D: \ {\rm for \  some\, } Q_1,\, Q_2\subset Q_1, {\rm maximal\, and\, satisfies\, \eqref{selection1}}\}$.

 Subsequently,  for each natural number $k\in{\mathbb N}$ we obtain
a family of non-overlapping cubes $Q_k$ such that
\begin{eqnarray}\label{eq3.1}
	 \lambda<\frac{1}{\mu(2Q_k)}\int_{2Q_k}|f(y)-e^{-\ell(Q_{k-1})^2L}f(y)|d\mu(y).
\end{eqnarray}
And then denote by
\begin{align}\label{Gk}
\mathcal{G}_k:=\{Q_k\in \D: {\rm for\, some\, } Q_{k-1},\, Q_k\subset Q_{k-1}, {\rm maximal\, and\, satisfies\, \eqref{eq3.1}}\}.
\end{align}
Denote by $\G_0:=\{Q_0\}$ and ${Q_k^\ast}\in \G_{k-1}$ the dyadic cube containing $Q_k\in \G_k$.
Now
\begin{align*}
	(f-e^{-\ell(Q_0)^2L}f)\chi_{Q_0}&=\sum_{k=1 }^{K} \sum_{Q_{k-1}\in \G_{k-1}}  %\sum_{j^{(k)}\subset j^{(k-1)}}
	 (f-e^{-\ell(Q_{k-1})^2L}f) \chi_{Q_{k-1}\backslash\cup_{Q_k\in\G_k} Q_k}\nonumber\\
	&\quad+\sum_{k= 1}^K \sum_{Q_k\in \G_{k}}
	%\sum_{j^{(k)}\subset j^{(k-1)}}
	 (e^{-\ell(Q_k)^2L}f-e^{-\ell({Q_k^\ast})^2L}f) \chi_{Q_k}\nonumber\\
&\quad+\sum_{Q_K\in \G_{K}} (f-e^{-\ell(Q_{K})^2L}f) \chi_{Q_{K}}.
\end{align*}
Note that when $K\to \infty$, $f-e^{-\ell(Q_{K})^2L}f\to 0$ and it follows from Lemma~\ref{C-k} below that
$\mu(\cup_{Q_K\in\G_K}Q_K)\to 0$.
Finally, it is seen that
\begin{align}\label{e3.5}
	(f-e^{-\ell(Q_0)^2L}f)\chi_{Q_0}&=\sum_{k\geq 1 } \sum_{Q_{k-1}\in \G_{k-1}}  %\sum_{j^{(k)}\subset j^{(k-1)}}
	 (f-e^{-\ell(Q_{k-1})^2L}f) \chi_{Q_{k-1}\backslash\cup_{Q_k\in\G_k} Q_k}\nonumber\\
	&\quad+\sum_{k\geq 1} \sum_{Q_k\in \G_{k}}
	%\sum_{j^{(k)}\subset j^{(k-1)}}
	 (e^{-\ell(Q_k)^2L}f-e^{-\ell({Q_k^\ast})^2L}f) \chi_{Q_k}.
\end{align}
We further investigate several fundamental properties with respect to the cubes $Q_k\in \G_k$ as in the decomposition \eqref{e3.5} above, which consist of Lemmas  \ref{A-k}--\ref{D-k}.

\begin{lemma}\label{A-k}
For every $Q_k$,
$$
\lambda<\frac{1}{\mu(2Q_k)}\int_{2Q_k}|f(y)-e^{-\ell({Q_k^\ast})^2L}f(y)|\,d\mu(y)\leq 2^n \lambda.
$$
\end{lemma}
\begin{proof}
	The left-hand inequality is due to the criterion~\eqref{eq3.1}. To  prove the right-hand inequality,
we denote by $\widetilde{Q_k}$  the father of $Q_k$. From our selection of cubes,
\begin{align*}
&\frac{1}{\mu(2Q_k)}\int_{2Q_k}|f(y)-e^{-\ell({Q_k^\ast})^2L}f(y)|d\mu(y)\\
&\quad\quad\leq \frac{\mu(2\widetilde{Q_k})}{\mu(2Q_k)}\frac{1}{\mu(2\widetilde{Q_k})}
\int_{2\widetilde{Q_k}}|f(y)-e^{-\ell({Q_k^\ast})^2L}f(y)|d\mu(y)
<2^n\lambda.
\end{align*}
This completes the proof.
\end{proof}

\begin{lemma}\label{B-k}
For every $Q_k$ and almost every $x\in 2Q_k$,
$$
|e^{-\ell(Q_k)^2L}f(x)-e^{-\ell({Q_k^\ast})^2L}f(x)|\leq C2^{2n}\lambda.
$$
\end{lemma}
\begin{proof}
For every $Q_k$, by ~\eqref{bmo1},
\begin{align*}
&|e^{-\ell(Q_k)^2L}f(x)-e^{-\ell({Q_k^\ast})^2L}f(x)|\\
&\leq |e^{-\ell(Q_k)^2L}(f-e^{-\ell({Q_k^\ast})^2L}f)(x)|+|e^{-(\ell(Q_k)^2+\ell({Q_k^\ast})^2)L}f(x)-e^{-\ell({Q_k^\ast})^2L}f(x)|\\
&\leq |e^{-\ell(Q_k)^2L}(f-e^{-\ell({Q_k^\ast})^2L}f)(x)|+\|f\|_{\bmo}.
\end{align*}
Then we estimate $|e^{-\ell(Q_k)^2L}(f-e^{-\ell({Q_k^\ast})^2L}f)(x)|$ with $x\in 2Q_k$. Denote by $\widetilde {Q_k}^{i}$  the $i$-th ancestor of $Q_k$ and let $i_0$ be the number such that $\widetilde {Q_k}^{i_0}$ is one of the children of ${Q_k^\ast}$.
We write
\begin{align}\label{eq:B-1}
 |e^{-\ell(Q_k)^2L}(f-e^{-\ell({Q_k^\ast})^2L}f)(x)|
&\leq \int_{X} |h_{\ell(Q_k)^2}(x,y)|\ |f(y)-e^{-\ell({Q_k^\ast})^2L}f(y)|d\mu(y)\nonumber\\
&\leq \int_{2Q_k} |h_{\ell(Q_k)^2}(x,y)|\ |f(y)-e^{-\ell({Q_k^\ast})^2L}f(y)|d\mu(y)\nonumber\\
&\quad+\int_{2\widetilde {Q_k}^{i_0}\backslash 2Q_k} |h_{\ell(Q_k)^2}(x,y)|\ |f(y)-e^{-\ell({Q_k^\ast})^2L}f(y)|d\mu(y)\nonumber\\
&\quad+\int_{X\backslash 2{\widetilde {Q_k}^{i_0}}} |h_{\ell(Q_k)^2}(x,y)|\ |f(y)-e^{-\ell({Q_k^\ast})^2L}f(y)|d\mu(y)\nonumber\\
&=:{\rm I} + {\rm II} + {\rm III}.
\end{align}
%For $x,y\in \R^n$, we have the upper bound for the heat kernel as follows.
%$$
%p_{t}(x,y)\leq \frac{C}{t^n}(1+\frac{d(x,y)}{t})^{-n-1}.
%$$
For the term ${\rm I}$,  from the pointwise upper bound of the heat kernel $h_{\ell(Q_k)^2}(x,y)$ and from Lemma~\ref{A-k}, we have
\begin{align*}
{\rm I}&\leq \frac{1}{\mu(Q_k)} \int_{2Q_k} |f(y)-e^{-\ell({Q_k^\ast})^2L}f(y)|d\mu(y)\\
&\leq \frac{2^n}{\mu(2Q_k)} \int_{2Q_k} |f(y)-e^{-\ell({Q_k^\ast})^2L}f(y)|d\mu(y)\\
&\leq2^{2n} \lambda.
\end{align*}
For the term ${\rm II}$,  we consider the chain of the dyadic cubes $\Big\{\widetilde {Q_k}^{i} \Big\}_{i=1}^{i_0}$ subject to the partial order via inclusion ``$\subseteq$'',  with the initial
dyadic cube $\widetilde {Q_k}^{1} $ which is the father of  $Q_k$  and the terminal one $\widetilde {Q_k}^{i_0} $ which is a child of ${Q_k^\ast}$.
From the pointwise upper bound  \textbf{(H2)} of the heat kernel $h_{\ell(Q_k)^2}(x,y)$,  we get
\begin{align*}
{\rm II}&\leq \sum_{i=1}^{i_0}\frac{1}{\mu(Q_k)}
e^{-c2^i}\int_{2\widetilde {Q_k}^{i}\backslash 2\widetilde {Q_k}^{i-1}} |f(y)-e^{-\ell({Q_k^\ast})^2L}f(y)|d\mu(y)\\
&\leq C\sum_{i=1}^{i_0}\frac{\mu(2\widetilde {Q_k}^{i})}{\mu(Q_k)}
e^{-c2^i}\frac{1}{\mu(2\widetilde {Q_k}^{i})}\int_{2\widetilde {Q_k}^{i}} |f(y)-e^{-\ell({Q_k^\ast})^2L}f(y)|d\mu(y).
\end{align*}
Then from our selection of cubes, we further have
\begin{align*}
{\rm II}\leq  C\lambda\sum_{i=1}^{i_0}2^{ni} e^{-c2^i}
\leq C\lambda.
\end{align*}
We now consider the term ${\rm III}$.
Note that in this case $d(x,y)\geq 2^{i_0}\ell(Q_k)=\ell({Q_k^\ast})$
for any $x\in 2Q_k $ and for $y\in M\backslash 2\widetilde {Q_k}^{i_0}$.
Then  we see that for any $x\in 2Q_k $ and for any $y\in M\backslash 2\widetilde {Q_k}^{i_0}$,
\begin{align*}
|h_{\ell(Q_k)^2}(x,y)|&\leq C\frac{1}{\mu(Q_k)} e^{-\frac{d(x,y)^2}{\ell(Q_k)^2}}\\
&\leq  C\frac{\mu({Q_k^\ast})}{\mu(Q_k)} \frac{1}{\mu({Q_k^\ast})}e^{-\frac{d(x,y)^2}{\ell({Q_k^\ast})^2}} e^{-2^{i_0}/2}\\
&\leq  C\frac{1}{\mu({Q_k^\ast})}e^{-\frac{d(x,y)^2}{\ell({Q_k^\ast})^2}}.
\end{align*}
Observe the fact that
$$
|e^{-2^{2i}\ell({Q_k^\ast})^2L}f-e^{-\ell({Q_k^\ast})^2L}f|\leq C  i \|f\|_{\bmo}, \ \ \ i=1, \ldots, i_0.
$$
A similar argument  as in the proof of ${\rm II}$ shows  that
${\rm III}\leq C \|f\|_{\bmo}\leq C\lambda$.

Combining the estimates of the terms ${\rm I}$, ${\rm II}$ and ${\rm III}$, we complete the proof of Lemma~\ref{B-k}.
\end{proof}

\begin{lemma}\label{C-k}
For arbitrary $\varepsilon'>\varepsilon$ with $\varepsilon$ in the John--Nirenberg inequality~\eqref{e1.11},
$$
\sum_{Q_k\in \G_k,Q_k\subset Q_k^\ast} \mu(Q_k)\leq Ce^{-\lambda/\varepsilon'}\mu({Q_k^\ast}).
$$
\end{lemma}
\begin{proof}
Recall the John--Nirenberg inequality that
$$
\mu(\{x\in Q: |f(x)-e^{-\ell(Q)^2L}f(x)|>\lambda\})\leq Ce^{-\lambda/\varepsilon}\mu(Q).
$$
If a set $E\subset Q$ satisfies
$$
\frac{1}{\mu(E)}\int_E |f(x)-e^{-\ell(Q)^2L}f(x)| \, d\mu(x)> \lambda,
$$
then we claim that for any $\varepsilon'>\varepsilon$ we have
\begin{align}\label{claim:JN}
\mu(E)\leq C_{\varepsilon'}e^{-\lambda/\varepsilon'}\mu(Q).
\end{align}
Actually Jensen's inequailty gives
$$
e^{\lambda/\varepsilon'}\leq e^{\frac{1}{\mu(E)}\int_E |f(x)-e^{-\ell(Q)^2L}f(x)|\, d\mu/\varepsilon'}\leq \frac{1}{\mu(E)}\int_E e^{|f(x)-e^{-\ell(Q)^2L}f(x)|/\varepsilon'} \, d\mu(x).
$$
And then
\begin{align*}
\mu(E)&\leq e^{-\lambda/\varepsilon'} \int_E e^{|f(x)-e^{-\ell(Q)^2L}f(x)|/\varepsilon'} d\mu(x)\\
&\leq e^{-\lambda/\varepsilon'} \int_Q e^{|f(x)-e^{-\ell(Q)^2L}f(x)|/\varepsilon'} d\mu(x)\\
&\leq e^{-\lambda/\varepsilon'} \left(\int_{|f-e^{-\ell(Q)^2L}f|\leq \lambda_0} e^{|f(x)-e^{-\ell(Q)^2L}f(x)|/\varepsilon'} d\mu(x)\right.\\
&\left.\qquad\qquad\qquad+ \sum_{k\geq 1}\int_{k\lambda_0<|f-e^{-\ell(Q)^2L}f|\leq (k+1)\lambda_0} e^{|f(x)-e^{-\ell(Q)^2L}f(x)|/\varepsilon'} d\mu(x)\right)\\
&\leq e^{-\lambda/\varepsilon'}\left( e^{\lambda_0/\varepsilon'}\mu(Q)+\sum_{k\geq 1}e^{(k+1)\lambda_0/\varepsilon'}\mu\big(\{x\in Q: |f(x)-e^{-\ell(Q)^2L}f(x)|>k\lambda_0\}\big)\right)\\
&\leq e^{-\lambda/\varepsilon'}\left( e^{\lambda_0/\varepsilon}\mu(Q)+\sum_{k\geq 1}e^{(k+1)\lambda_0/\varepsilon'}e^{-k\lambda_0/\varepsilon}\mu(Q)\right)\\
&\leq C_{\varepsilon,\varepsilon',\lambda_0} e^{-\lambda/\varepsilon'}\mu(Q).
\end{align*}
It follows from Vitali Covering Lemma~\cite[p. 9]{St3} that we can choose disjoint subsequce of $\{2Q_{k_i}\}$ from $\{2Q_k\}$ where
 $Q_k\in \G_k,Q_k\subset Q_k^\ast$ such that
$$
\sum_{Q_k\in \G_k,Q_k\subset Q_k^\ast} \mu(Q_k)\leq C \sum_{i} \mu(2Q_{k_i})=C\mu\Bigg(\bigcup_{i}2Q_{k_i}\Bigg),
$$
where $C$ depends only the doubling constants in \eqref{doubling2}.
For every ball $2Q_{k_i}$, from our selection criterion~\eqref{eq3.1} we have
$$
\frac{1}{\mu(2Q_{k_i})}\int_{2Q_{k_i}} |f(x)-e^{-\ell({Q_k^\ast})^2L}f(x)| \, d\mu(x)> \lambda
$$
and thus
$$
\frac{1}{\mu(\cup_{i}2Q_{k_i})}\int_{\bigcup_{i}2Q_{k_i}} |f(x)-e^{-\ell({Q_k^\ast})^2L}f(x)| \, d\mu(x)> \lambda.
$$
Then from our claim~\eqref{claim:JN}, we have
$$\sum_{Q_k\in \G_k,Q_k\subset Q_k^\ast} \mu(Q_k)\leq C_{n,\varepsilon'}e^{-\lambda/\varepsilon'}\mu({Q_k^\ast}).$$
The proof is complete.
\end{proof}

\begin{lemma}\label{D-k}
For almost every $x$ on the set ${Q_k^\ast}\backslash\cup_{Q_k\in \G_k,Q_k\subset Q_k^\ast} Q_k$,
$$
|f(x)-e^{-\ell({Q_k^\ast})^2L}f(x)|\leq \lambda.
$$
\end{lemma}
\begin{proof}
The criterion~\eqref{eq3.1} gives that for $x\in {Q_k^\ast}\backslash \cup Q_k$, we have that
$$
|f(x)-e^{-\ell({Q_k^\ast})^2L}f(x)|\leq \overline{\lim_{\ell(R)\to 0,x\in 2R}}\frac{1}{\mu(2R)}\int_{2R}|f(y)-e^{-\ell({Q_k^\ast})^2L}f(y)|d\mu(y)\leq \lambda,
$$
where $R$ are the descendants of ${Q_k^\ast}$ which contain $x$ and are not selected.
\end{proof}

 \medskip

\noindent
{\bf Back to the proof of Proposition~\ref{lem3.2}}  From \eqref{e3.5},   we rewrite the function $(f-e^{-\ell(Q_0)^2L}f)$ in the following:
\begin{eqnarray} \label{e3.8}
(f-e^{-\ell(Q_0)^2L}f)\chi_{Q_0} = g_{Q_0}+h_{Q_0},
\end{eqnarray}
where
$$
g_{Q_0}=\sum_{k\geq 1 } \sum_{Q_{k-1}\in \G_{k-1}}
	 (f-e^{-\ell(Q_{k-1})^2L}f) \chi_{Q_{k-1}\backslash\cup_{Q_k\in\G_k} Q_k}
	$$
and
$$
h_{Q_0}= \sum_{k\geq 1} \sum_{Q_k\in \G_{k}}
		 (e^{-\ell(Q_k)^2L}f-e^{-\ell({Q_k^\ast})^2L}f) \chi_{Q_k}
$$

 From Lemma~\ref{D-k}, it is seen that $\|g_{Q_0}\|_\infty\leq C\lambda$. From Lemma~\ref{C-k}, we know that $$
 \left\{E_k:=\cup_{Q_k\in \G_k}Q_k\right\}_{k}
 $$
  are $2^{-\lambda/\varepsilon'}$-graded sequence.
Denote by
	\begin{eqnarray}\label{aaab}
 a_{Q_k}(x)=e^{-\ell(Q_k)^2L}f(x)-e^{-\ell({Q_k^\ast})^2L}f(x).
	\end{eqnarray}
Lemma~\ref{B-k} shows that $ a_{Q_k}(x)$ satisfies condition~\eqref{size condition3.1}.

The proof of  the estimation  \eqref{smooth condition3.3}  of  $ a_{Q_k}(x)$  is a consequence of
  the following lemma. Once it is proved,    the proof of Proposition~\ref{lem3.2} is complete.
\end{proof}

\begin{lemma}\label{lem3.6}
	If $f\in \bmo$, then for every $t>0$ and every $K\geq 1$, there exists a constant
	$C>0$ such that for almost all $x\in X$, we have
	\begin{eqnarray}\label{bmo2}
		 |(tL)(e^{-tL}-e^{-KtL})f(x)|\leq C\|f\|_{\bmo}.
	\end{eqnarray}
\end{lemma}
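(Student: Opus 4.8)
The plan is to deduce \eqref{bmo2} from the scale-free pointwise estimate
$$
\sup_{\rho>0}\big\|(\rho L)e^{-\rho L}f\big\|_{L^\infty(X)}\le C\|f\|_{\bmo}.
$$
This is enough: since $tLe^{-KtL}=\tfrac1K(KtL)e^{-KtL}$, one has the elementary identity
$$
(tL)(e^{-tL}-e^{-KtL})f=(tL)e^{-tL}f-\tfrac1K(KtL)e^{-KtL}f,
$$
whence $|(tL)(e^{-tL}-e^{-KtL})f(x)|\le\|(tL)e^{-tL}f\|_{\infty}+\|(KtL)e^{-KtL}f\|_{\infty}\le 2C\|f\|_{\bmo}$, which is \eqref{bmo2}; note that, in contrast to the naive bound coming from \eqref{bmo1}, no factor $\log K$ appears. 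Equivalently, from $(tL)(e^{-tL}-e^{-KtL})f=t\int_t^{Kt}L^2e^{-uL}f\,du=t\int_t^{Kt}u^{-2}(uL)^2e^{-uL}f\,du$ one may instead reduce to $\sup_{u>0}\|(uL)^2e^{-uL}f\|_{\infty}\le C\|f\|_{\bmo}$ and integrate; both reductions are treated in the same way.

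The pointwise bound on $(\rho L)e^{-\rho L}f$ rests on a ``matched-scale'' estimate: for every $\sigma>0$ and a.e.\ $x$,
$$
\big|(\sigma L)e^{-\sigma L}\big(f-e^{-\sigma L}f\big)(x)\big|\le C\|f\|_{\bmo},
$$
which is proved just as \eqref{bmo1} (cf.\ \cite[Proposition 2.6]{DY1}). The kernel of $(\sigma L)e^{-\sigma L}$ equals $-\sigma\,\partial_\sigma h_\sigma(x,y)$ and inherits from \textbf{(H2)} the Gaussian bound $|{-\sigma\,\partial_\sigma h_\sigma(x,y)}|\le C\mu(B(x,\sqrt\sigma))^{-1}\exp(-d(x,y)^2/c\sigma)$; one splits $X$ into $B:=B(x,\sqrt\sigma)$ and the annuli $2^{j+1}B\setminus 2^{j}B$, $j\ge0$. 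On $B$ the semigroup time matches the radius, $r_B^2=\sigma$, so \eqref{e1.9} bounds that contribution by $C\|f\|_{\bmo}$; on the $j$-th annulus the Gaussian factor $e^{-4^j/c}$ absorbs the doubling growth $2^{jn}$ together with the extra $(1+j)\|f\|_{\bmo}$ produced by replacing $e^{-\sigma L}f$ with $e^{-4^{j+1}\sigma L}f$ (the average $e^{-r^2L}f$ for the ball $2^{j+1}B$) via \eqref{bmo1}, and the resulting series in $j$ converges.

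Next I would telescope along the dyadic scales $t_p=2^p\rho$. Using $e^{-t_pL}f-e^{-t_{p+1}L}f=e^{-t_pL}(f-e^{-t_pL}f)$,
$$
(\rho L)e^{-\rho L}f=\sum_{p=0}^{P-1}(\rho L)e^{-t_pL}\big(f-e^{-t_pL}f\big)+(\rho L)e^{-t_PL}f,
$$
and, since $(\rho L)e^{-t_pL}=2^{-p}(t_pL)e^{-t_pL}$, the matched-scale estimate at $\sigma=t_p$ bounds the $p$-th summand by $C2^{-p}\|f\|_{\bmo}$; hence the partial sums are $\le 2C\|f\|_{\bmo}$ for every $P$. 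It then remains to check that the remainder $(\rho L)e^{-t_PL}f=2^{-P}(t_PL)e^{-t_PL}f$ tends to $0$ as $P\to\infty$: from the Gaussian kernel bound and $f\in\mathcal M_{x_0,\beta}$ one obtains $|(t_PL)e^{-t_PL}f(x)|\le C_{x,f}(1+t_P)^{\beta/2}$, so the remainder is $O\big(\rho^{\beta/2}2^{P(\beta/2-1)}\big)$, and since $\bmo$-functions grow at most logarithmically one may take $\beta<2$, whereupon the remainder vanishes. Letting $P\to\infty$ gives $|(\rho L)e^{-\rho L}f(x)|\le 2C\|f\|_{\bmo}$ for a.e.\ $x$ and all $\rho>0$, and \eqref{bmo2} follows.

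The delicate point is exactly this vanishing of the remainder. Because the semigroup is not assumed conservative, the kernel of $(\rho L)e^{-\rho L}$ need not have vanishing integral, so the classical device of subtracting a constant is unavailable; the telescoping substitutes a comparison with $e^{-t_pL}f$ at every scale, but one must then beat the (slow) growth of $(\sigma L)e^{-\sigma L}f$ in $\sigma$ by the geometric factor $2^{-P}$. Quantitatively this requires $f\in\mathcal M_{x_0,\varepsilon}$ for arbitrarily small $\varepsilon>0$ (equivalently $\mu(B(x_0,r))^{-1}\int_{B(x_0,r)}|f|\lesssim\log r$), a sharpening of $f\in\mathcal M$ that must be extracted from \eqref{e1.9} and \eqref{bmo1}; if only $f\in\mathcal M_{x_0,\beta_0}$ with $\beta_0$ large is available, one instead first proves the analogous bound for $(\rho L)^{M}e^{-\rho L}f$ with $M>\beta_0/2$, whose telescoped remainder is $O\big(2^{-MP}(1+t_P)^{\beta_0/2}\big)\to0$, and then descends to $M=1$.
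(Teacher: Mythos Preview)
Your core ingredients---the Gaussian bound \eqref{e3.10} on the time-derivative kernel, the ``matched-scale'' estimate $|(\sigma L)e^{-\sigma L}(I-e^{-\sigma L})f(x)|\le C\|f\|_{\bmo}$ proved by annulus decomposition, and a dyadic telescoping with geometric weights $2^{-p}$---are exactly those of the paper.  Where you diverge is in routing the argument through the \emph{stronger} assertion $\sup_{\rho>0}\|(\rho L)e^{-\rho L}f\|_{L^\infty}\le C\|f\|_{\bmo}$ and then deducing \eqref{bmo2} from it.  This forces you to control a remainder at infinity, $2^{-P}(t_PL)e^{-t_PL}f(x)$ as $P\to\infty$, and your treatment of that remainder is where the argument is incomplete.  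The claim that every $f\in\bmo$ lies in $\mathcal{M}_{x_0,\varepsilon}$ for every $\varepsilon>0$ is plausible but is not proved here; carrying it out requires uniform control of $e^{-L}f$ over growing balls, which in turn feeds back into the averages of $|f|$ one is trying to bound.  The alternative ``descent from $(\rho L)^M$ to $M=1$'' is asserted but not executed: integrating the bound $\|(\sigma L)^Me^{-\sigma L}f\|_\infty\le C$ from $\sigma=\rho$ to $\sigma=t_P$ only yields $|(t_PL)^{M-1}e^{-t_PL}f|\lesssim t_P^{M-1}$, which exactly cancels the geometric gain $2^{-(M-1)P}$, so the remainder at level $M-1$ does not obviously vanish.

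The paper sidesteps the remainder entirely by applying the \emph{same} telescoping directly to the difference rather than to $(\rho L)e^{-\rho L}f$.  With $\ell=\lfloor\log_2 K\rfloor$ one writes
\[
(tL)(e^{-tL}-e^{-KtL})f=\sum_{k=0}^{\ell-1}2^{-k}\,(2^ktL)e^{-2^ktL}(I-e^{-2^ktL})f\;+\;2^{-\ell}(2^\ell tL)(e^{-2^\ell tL}-e^{-KtL})f,
\]
a \emph{finite} sum whose first $\ell$ terms are bounded by your matched-scale estimate and whose last term is the same estimate at ratio $K/2^\ell\in[1,2)$.  No limit $P\to\infty$, hence no a~priori growth information on $f$, is needed.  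In short: keep your matched-scale bound and your telescoping, but telescope the difference $e^{-tL}-e^{-KtL}$ rather than the single term $e^{-\rho L}$; then the detour through the stronger (and harder) inequality disappears.
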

\begin{proof} The proof of \eqref{bmo2} is based on the following fact that
 the Gaussian upper bounds for $p_t(x,y)$ are further
	inherited by the time derivatives of $p_t(x,y)$. That is, for each
	$k\in{\mathbb N}$, there exist two positive constants $c_k$ and $C_k$ such
	that	
	\begin{eqnarray}\label{e3.10}
		\Big|{\partial^k \over\partial t^k} p_t(x,y) \Big|\leq
		{C_k\over  t^k\mu(B(x, \sqrt{t}))}\exp\Big(-\frac{d^2(x,y)}{c_kt}\Big),\
		\quad\forall \,t>0,
	\end{eqnarray}
	  for almost every $x,y\in X$ (see  for example,    \cite[Theorem~6.17]{Ou}).

	  Now for any $K>1$, let $\ell$ be an integer such that $2^\ell \leq K< 2^{\ell+1}$. One can write
	  \begin{align*}
	  	 |(tL)(e^{-tL}-e^{-KtL})f(x)|&\leq \sum_{k=0}^{\ell-1} 2^{-k}|(2^ktL)(e^{-2^ktL}-e^{-2^{k+1}tL})f(x)|\\
	  	&\qquad+2^{-\ell}|(2^\ell tL)(e^{-2^\ell tL}-e^{-KtL})f(x)|.
	  \end{align*}
From \eqref{bmo1} and \eqref{e3.10}, we see  that  %for almost all $x\in X$,
   	 \begin{align*}
   		 |(tL)(e^{-tL}-e^{-KtL})f(x)|
   		&\leq C\|f\|_{\bmo} + 2^{-\ell}|(2^\ell tL)(e^{-2^\ell tL}-e^{-KtL})f(x)|,\,\,{\rm a.e.}\,x\in X.
   	\end{align*}
	An  the argument as in \cite[p. 1386--1387]{DY1} shows that
	$
	|(2^\ell tL)(e^{-2^\ell tL}-e^{-KtL})f(x)|\leq C\|f\|_{\bmo},
	$
	and so 	$	 |(tL)(e^{-tL}-e^{-KtL})f(x)| \leq C\|f\|_{\bmo}$. This proves \eqref{bmo2}, and completes
	 the proof of Lemma~\ref{lem3.6}.
\end{proof}

\medskip

 Next we state the following properties of the  family of functions $\{ a_{Q_k}(x)\}$, which will be useful in Section 5 below.

\begin{lemma}\label{lem3.7}
	 Let  $\{a_{Q_k}\} $ be the  family of functions given  in \eqref{e3.5}.
Then for almost all $x\in 2Q_k$ and $0<t\leq \ell(Q_k),$
	\begin{align}\label{smooth condition3.2}
		 |(I-e^{-t^2L})a_{Q_k}(x)|\leq C \left(\frac{t}{\ell(Q_k)}\right)^2\lambda
	\end{align}
	and
	\begin{align}\label{support condition3.4}
		\int_X |h_{t^2}(x,y)|\ |(\ell(Q_k)^2L)^m a_{Q_k}(y)|d\mu(y)\leq C\lambda, \ \ \ \ \  m=0, 1.
	\end{align}
	\end{lemma}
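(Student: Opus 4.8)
The plan is to exploit the explicit form $a_{Q_k}(x) = e^{-\ell(Q_k)^2 L}f(x) - e^{-\ell({Q_k^\ast})^2 L}f(x)$ from \eqref{aaab} and reduce everything to the kernel bounds \textbf{(H2)} and their time-derivative refinements \eqref{e3.10}, together with the oscillation estimate \eqref{bmo1} and its variant \eqref{bmo2} already established above. Write $s = \ell(Q_k)$ and $s^\ast = \ell({Q_k^\ast})$, so $s^\ast = 2^{i_0} s$ for the integer $i_0$ of the proof of Lemma~\ref{B-k}; recall that the key point is $2 Q_k$ is well inside $Q_k^\ast$, so on $2Q_k$ we already control $|f - e^{-(s^\ast)^2 L}f|$ by $C\lambda$ "on average over $2Q_k$" via Lemma~\ref{A-k}.

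For \eqref{smooth condition3.2}, I would write
$$
(I - e^{-t^2 L}) a_{Q_k}(x) = -\int_0^{t^2} L e^{-uL} a_{Q_k}(x)\, du
= -\int_0^{t^2} L e^{-uL}\big(e^{-s^2 L} - e^{-(s^\ast)^2 L}\big) f(x)\, du,
$$
and for each fixed $u \in (0, t^2]$ bound $|L e^{-uL}(e^{-s^2 L} - e^{-(s^\ast)^2 L})f(x)|$. The operator $Le^{-uL}e^{-s^2 L} = Le^{-(u+s^2)L}$ has kernel obeying \eqref{e3.10} with $k=1$ and time $u + s^2 \sim s^2$ (since $u \le t^2 \le s^2$), and similarly with $s^\ast$ in place of $s$. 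Splitting the integration in $y$ as in \eqref{eq:B-1} into $2Q_k$, the annular pieces $2\widetilde{Q_k}^i \setminus 2\widetilde{Q_k}^{i-1}$ up to $2\widetilde{Q_k}^{i_0}$, and $X \setminus 2\widetilde{Q_k}^{i_0}$, and applying Lemma~\ref{A-k} on each annulus (with the Gaussian decay $e^{-c2^i}$ absorbing the doubling growth $2^{ni}$), one gets $|Le^{-(u+s^2)L}(f - e^{-(s^\ast)^2 L}f)(x)| \le C s^{-2}\lambda$ — here the crucial gain is the factor $(u + s^2)^{-1} \sim s^{-2}$ from the time derivative, and the $-e^{-(s^\ast)^2 L}f$ term is harmless because $|e^{-(u+s^2+(s^\ast)^2)L}f - e^{-(s^\ast)^2 L}f| \le C\lambda$ by \eqref{bmo1}. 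Integrating over $u \in (0, t^2]$ produces $t^2 \cdot C s^{-2}\lambda = C(t/s)^2\lambda$, which is \eqref{smooth condition3.2}.

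For \eqref{support condition3.4}, the case $m=0$ is handled by the same annular decomposition of the $y$-integral: $|h_{t^2}(x,y)|$ obeys \textbf{(H2)}, and since $0 < t \le s = \ell(Q_k)$, the Gaussian factor $\exp(-d(x,y)^2/ct^2)$ gives at least the decay $\exp(-d(x,y)^2/cs^2)$, so after rescaling the kernel mass to $\mu(Q_k^\ast)$ (exactly as in the term III estimate of Lemma~\ref{B-k}) we reduce to averages of $|f - e^{-(s^\ast)^2L}f|$ over annuli of $Q_k^\ast$, each bounded by $C\lambda$ by Lemma~\ref{A-k} and its ancestor analogue; the total is $C\lambda$. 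For $m=1$ we use $(s^2 L)^1 a_{Q_k} = s^2 L(e^{-s^2 L} - e^{-(s^\ast)^2 L})f$; since $s^2 L e^{-s^2 L} f$ and $s^2 L e^{-(s^\ast)^2 L}f$ again have kernels with the \eqref{e3.10} bound at time $\sim s^2$ and $\gtrsim (s^\ast)^2$ respectively (the extra $s^2/(s^\ast)^2 \le 1$ factor being harmless), the pointwise bound $|(s^2 L) a_{Q_k}(x)| \le C\lambda$ on $2Q_k$ follows from Lemma~\ref{lem3.6}/\eqref{smooth condition3.3}, and then $\int_X |h_{t^2}(x,y)| |(s^2 L) a_{Q_k}(y)|\, d\mu(y)$ is bounded by the same annular argument, using \eqref{smooth condition3.3} on $2Q_k$ and \eqref{bmo2}-type bounds on the outer regions.

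I expect the main obstacle to be the \textbf{off-diagonal / outer region} bookkeeping: the function $a_{Q_k}$ is a difference of two semigroup outputs whose kernels live at two different scales $s$ and $s^\ast$, and $a_{Q_k}$ is \emph{not} supported near $Q_k$, so when $x \in 2Q_k$ but $y$ is far away one must carefully track which semigroup scale controls the decay and convert "$f - e^{-(s^\ast)^2L}f$ averages over large annuli of $Q_k^\ast$" into $O(\lambda)$ using only Lemma~\ref{A-k} (which is stated for the selected cubes) plus \eqref{bmo1}. The cleanest route is to run, for each of \eqref{smooth condition3.2}, \eqref{support condition3.4}, the identical three-piece decomposition used in the proof of Lemma~\ref{B-k}, and in each piece replace the bare kernel bound by the time-differentiated one \eqref{e3.10} when a derivative in $t$ or a power of $L$ is present — the extra scaling factor it supplies is precisely what makes \eqref{smooth condition3.2} quantitatively sharp in $t/\ell(Q_k)$. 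Everything else is routine summation of geometric series $\sum_i 2^{ni} e^{-c2^i} < \infty$.
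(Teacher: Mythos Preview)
Your argument for \eqref{smooth condition3.2} and for the case $m=1$ of \eqref{support condition3.4} is correct but more elaborate than necessary. The paper observes that \eqref{bmo2} is a \emph{global} bound, $\|(\ell(Q_k)^2 L)a_{Q_k}\|_{L^\infty(X)}\le C\lambda$, so for \eqref{smooth condition3.2} one simply writes $(I-e^{-t^2L})a_{Q_k}=\int_0^{t^2} e^{-uL}(La_{Q_k})\,du$ and uses the uniform $L^\infty$-boundedness of $e^{-uL}$ (from $\int_X |h_u(x,y)|\,d\mu(y)\le C$) to obtain $Ct^2\ell(Q_k)^{-2}\lambda$ directly, with no annular decomposition needed; likewise $m=1$ of \eqref{support condition3.4} is then one line. (Incidentally, in your error-term step you invoke \eqref{bmo1} where \eqref{bmo2} is what is actually required, since an $L$ is still present.)

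The genuine gap is in the case $m=0$ of \eqref{support condition3.4}. Your plan is to run the annular decomposition of Lemma~\ref{B-k} on $\int_X |h_{t^2}(x,y)|\,|a_{Q_k}(y)|\,d\mu(y)$ and ``reduce to averages of $|f-e^{-(s^\ast)^2L}f|$''. But the integrand here is $|a_{Q_k}(y)|=|e^{-s^2L}f(y)-e^{-(s^\ast)^2L}f(y)|$, not $|f(y)-e^{-(s^\ast)^2L}f(y)|$, and Lemma~\ref{A-k} gives no control on annular averages of $|a_{Q_k}|$. Globally one only has $|a_{Q_k}(y)|\le C(1+\log(s^\ast/s))\lambda$ from \eqref{bmo1}, and the factor $\log(s^\ast/s)\sim i_0$ is \emph{unbounded} over the stopping-time family, so the tail $\int_{(2Q_k)^c}|h_{t^2}(x,y)|\,|a_{Q_k}(y)|\,d\mu(y)$ is not dominated by $C\lambda$ with a uniform constant. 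The paper's remedy is the split
\[
a_{Q_k}=e^{-s^2L}\big(f-e^{-(s^\ast)^2L}f\big)+\big(e^{-(s^2+(s^\ast)^2)L}-e^{-(s^\ast)^2L}\big)f.
\]
The second piece lies in $L^\infty(X)$ with norm $\le C\lambda$ by \eqref{bmo1} (the relevant ``$K$'' is at most $2$), while for the first piece the composed kernel $\int_X|h_{t^2}(x,y)|\,|h_{s^2}(y,z)|\,d\mu(y)$ has a Gaussian bound at scale $\sim s$, which puts you exactly in the setting of \eqref{eq:B-1} with integrand $|f(z)-e^{-(s^\ast)^2L}f(z)|$. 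Without this split your $m=0$ argument does not close.
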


	\begin{proof}
	We write
		$
		 1-e^{-t^2\lambda}=\int_0^{t^2} \lambda e^{-s\lambda} ds.
		$
	It follows from \eqref{bmo2} that
		\begin{align*}
			 |(I-e^{-t^2L})a_{Q_k}(x)|&\leq [{\ell(Q_k)]^{-2}}\int_0^{t^2} |e^{-sL} (\ell(Q_k)^2L)(e^{-\ell(Q_k)^2L}-e^{-\ell({Q_k^\ast})^2L})f(x)|\, ds\\
			&\leq C[{\ell(Q_k)]^{-2}}\int_0^{t^2} \|(\ell(Q_k)^2L)(e^{-\ell(Q_k)^2L}-e^{-\ell({Q_k^\ast})^2L})f\|_{L^\infty} \, ds\\
			&\leq C t^2[{\ell(Q_k)]^{-2}}\|f\|_{\bmo}\\
			&\leq C t^2\lambda[{\ell(Q_k)]^{-2}}.
		\end{align*}
		This completes the proof of  \eqref{smooth condition3.2}.
		
		We now prove \eqref{support condition3.4}.  The case $m=1$ is a consequence of ~\eqref{bmo2} and the heat kernel $h_t(x,y)\in L^1$ uniformly for $t$. For $m=0$, one can write
		\begin{align*}
			 e^{-\ell(Q_k)^2L}f(x)-e^{-\ell({Q_k^\ast})^2L}f(x)&=(e^{-\ell(Q_k)^2L}-
			 e^{-(\ell({Q_k^\ast})^2+\ell(Q_k)^2)L})f(x)\\
			 &\quad+(e^{-(\ell({Q_k^\ast})^2+\ell(Q_k)^2)L}-e^{-\ell({Q_k^\ast})^2L})f(x).
		\end{align*}
		It follows from ~\eqref{bmo1} that
		$
		 |(e^{-\ell({Q_k^\ast})^2L}-e^{-(\ell({Q_k^\ast})^2+\ell(Q_k)^2)L})f(x)|\leq C\|f\|_{\bmo}.
		$
	A similar argument as in estimate~\eqref{eq:B-1} in Lemma~\ref{B-k} shows  that
$$
		\int_X |h_{t^2}(x,y)|\ |e^{-\ell(Q_k)^2L}(I-e^{-\ell({Q_k^\ast})^2L})f(y)| \, d\mu(y)\leq C \lambda,
		$$
		and   completes the proof of \eqref{support condition3.4}. The proof of Lemma~\ref{lem3.7} is complete.
	\end{proof}

\medskip

\section{Proof of Theorem~\ref{th1.1}: step two}
\setcounter{equation}{0}

Theorem~\ref{th1.1} consists of fixing $\varepsilon>\varepsilon_L (f)$, choosing a large $\lambda>\lambda_0(\varepsilon,f)$, and constructing $
h\in {\rm BMO}_L(X)$ in \eqref{e1.13} so that $\|h\|_{{\rm BMO}_L(X)}\leq c\varepsilon$
and that $\|f-h\|_{L^{\infty}(X)}\leq C\lambda$. That is, in the construction of $h$, the term $\|h\|_{{\rm BMO}_L(X)}$ should be small enough.
To do so, we continue the following construction of ${\rm BMO}_L(X)$ functions which are supported on a given cube $Q_0$.

\begin{proposition}\label{mainth2}Suppose that $L$ is a densely-defined operator
	on $L^2(X)$ satisfying \textbf{(H1)} and \textbf{(H2)}.
	Fixed a dyadic system $\mathcal D$ and large dyadic cube $Q_0\in \mathcal D$. There are constants $A_1$ and $A_2$, depending only on the doubling constants in \eqref{doubling2} and the Gaussian upper bound, such that if $f\in \BL(X)$ and if $\varepsilon>\varepsilon_L(f)$, then
	$$
	 \big(f(x)-e^{-\ell(Q_0)^2L}f(x)\big)\chi_{Q_0}(x)=g_{Q_0}(x)+h_{Q_0}(x)
	$$
	with
	$$
	\|g\|_{L^\infty}\leq A_1\lambda
	$$
	and
	$$
	h_{Q_0}(x)= {\varepsilon\over \lambda} \sum_Q a_{Q}(x)\chi_{Q}(x),
	$$
	where the cubes $\{Q\}$ run over a $\gamma$-graded sequence with $0<\gamma<1$ and $a_{Q}(x)$ satisfies that for almost all $x\in 2Q$ and $0<t\leq \ell({Q})$ ,
	\begin{align}\label{size condition}
		|a_{Q}(x)|\leq C\lambda,
	\end{align}
	and
	\begin{align}\label{smooth condition1}
		|La_Q(x)|\leq C\ell({Q})^{-2}\lambda.
	\end{align}
\end{proposition}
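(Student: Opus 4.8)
The proof starts from Proposition~\ref{lem3.2}, which already writes $(f-e^{-\ell(Q_0)^2L}f)\chi_{Q_0}=\widetilde g_{Q_0}+\widetilde h_{Q_0}$ with $\|\widetilde g_{Q_0}\|_\infty\le C\lambda$ and $\widetilde h_{Q_0}=\sum_Q a_Q\chi_Q$ over the $2^{-\lambda/\varepsilon}$-graded family $\bigcup_k\mathcal G_k$, the coefficients $a_Q=e^{-\ell(Q)^2L}f-e^{-\ell(Q^\ast)^2L}f$ satisfying \eqref{size condition3.1}, \eqref{smooth condition3.3} and the sharper bounds of Lemma~\ref{lem3.7}. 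The task of Proposition~\ref{mainth2} is to re-expand $\widetilde h_{Q_0}$ over a much finer, but only \emph{absolutely} sparse, family, at the price of the prefactor $\varepsilon/\lambda$ and of enlarging the $L^\infty$ part by a bounded factor. The plan is: (i) refine the graded family by inserting $\sim\lambda/\varepsilon$ intermediate layers inside each $Q_k$; (ii) re-define the coefficients by telescoping heat-semigroup differences along the refined tree; (iii) verify the size, smoothness and graded properties by the estimates already developed in Section~3.

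First I refine $\bigcup_k\mathcal G_k$. For each $k$ and each $Q_k\in\mathcal G_k$ I insert layers $Q_k=F_{k,0}\supset F_{k,1}\supset\cdots\supset F_{k,N}\supset\bigcup_{Q_{k+1}\subset Q_k}Q_{k+1}$, $N\sim\lambda/\varepsilon$, each a union of dyadic subcubes of $Q_k$, constructed so that the concatenation over all $k$ is $\gamma$-graded for an absolute $\gamma=\gamma(n,C,c)<1$. Each refinement step is obtained by interleaving two mechanisms: a Calderón--Zygmund stopping layer — where the layer that captures the points at which the $e^{-\ell(R^\dagger)^2L}$-centered average of $|f-e^{-\ell(R^\dagger)^2L}f|$ over $2R$ is too large has controlled relative measure by the John--Nirenberg inequality \eqref{e1.11} (the claim \eqref{claim:JN} used in Lemma~\ref{C-k}); and a dyadic boundary-shell layer $\{\,y:\ d(y,\partial(\text{current cube}))\le c\,2^{-i}\ell\,\}$, whose relative measure is geometrically small \emph{unconditionally} by the thin-boundary estimates \eqref{annulars1}--\eqref{annulars2}. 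The boundary-shell layers guarantee $\gamma<1$ at every step regardless of the stopping behaviour, while the stopping layers force the process to reach the original cubes $Q_{k+1}$ after $\sim\lambda/\varepsilon$ steps; re-centering the semigroup at each layer keeps the oscillation per step of order $\varepsilon$. This combined ``geometric construction involving the semigroup $e^{-tL}$'' is the technical heart of the proof.

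Next, with $R^\dagger$ the predecessor of a refinement cube $R$ in the refined tree, set $a_R:=\frac{\lambda}{\varepsilon}\bigl(e^{-\ell(R)^2L}f-e^{-\ell(R^\dagger)^2L}f\bigr)$ and $h_{Q_0}:=\frac{\varepsilon}{\lambda}\sum_R a_R\chi_R$. Telescoping $\frac{\varepsilon}{\lambda}a_R\chi_R$ along each maximal chain recovers $e^{-\ell(R_{\mathrm{leaf}})^2L}f-e^{-\ell(Q_0)^2L}f$ on the corresponding leaf cell, so $h_{Q_0}=(f-e^{-\ell(Q_0)^2L}f)\chi_{Q_0}-g_{Q_0}$ with $g_{Q_0}=\sum_{\text{leaf cells}}(f-e^{-\ell(R_{\mathrm{leaf}})^2L}f)\chi_{R_{\mathrm{leaf}}\setminus(\text{children})}$ together with the smooth ``defect'' terms $\bigl[e^{-(\ell(R)^2+\ell(R^\dagger)^2)L}-e^{-\ell(R^\dagger)^2L}\bigr]f$ produced by the re-centering, which collapse along a chain by the semigroup identity and \eqref{bmo1}--\eqref{bmo2}. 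Then $\|g_{Q_0}\|_\infty\le A_1\lambda$ because on a leaf cell the non-selection of descendants in the next stopping layer forces $|f-e^{-\ell(R_{\mathrm{leaf}})^2L}f|\lesssim\lambda$ pointwise (Lemma~\ref{D-k} at the refinement scale), plus the $O(\lambda)$ contributions of $\widetilde g_{Q_0}$ and of the absorbed defects; $|a_R|\le C\lambda$ on $2R$ follows by the argument of Lemma~\ref{B-k} — split $X$ into $2R$, the dyadic annuli around $2R$ inside $R^\dagger$, and the complement of $R^\dagger$, use \textbf{(H2)} against the average bound $\frac1{\mu(2R)}\int_{2R}|f-e^{-\ell(R^\dagger)^2L}f|\lesssim\varepsilon$ forced by the position of $R$ (as in Lemma~\ref{A-k}) on the main term and \eqref{bmo1} on the defect, so that in fact $|e^{-\ell(R)^2L}f-e^{-\ell(R^\dagger)^2L}f|\lesssim\varepsilon$; and $|La_R|\le C\ell(R)^{-2}\lambda$ on $2R$ follows by the argument of Lemmas~\ref{lem3.6}--\ref{lem3.7}, writing $La_R=\frac{\lambda}{\varepsilon}\ell(R)^{-2}(\ell(R)^2L)e^{-\ell(R)^2L}(f-e^{-\ell(R^\dagger)^2L}f)$ up to a harmless defect and estimating the kernel $(\ell(R)^2L)h_{\ell(R)^2}$ by the time-derivative Gaussian bound \eqref{e3.10}. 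Finally, the $\gamma$-graded property of $\{R\}$ with $0<\gamma<1$ absolute is exactly the output of the first step.

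The main obstacle lies in the first two steps: designing a refinement that is \emph{at once} absolutely $\gamma$-graded and compatible with a clean telescoping — which is what forces the interleaving of John--Nirenberg stopping layers with dyadic boundary shells — and controlling the accumulation along long chains of the non-conservation defects $e^{-sL}e^{-tL}f\neq e^{-tL}f$. This is precisely where the absence of $e^{-tL}1=1$ and of any spatial regularity of $h_t(x,y)$ rules out the classical Garnett--Jones argument, and where the semigroup estimates \eqref{bmo1}--\eqref{bmo2} and \eqref{e3.10} must take the place of pointwise kernel regularity; the random dyadic lattice \eqref{Rcondition} is not needed here and enters only in the subsequent assembly of $h$ from the $h_{Q_0}$'s.
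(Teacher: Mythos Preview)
Your proposal has a genuine gap at the key step: the claim that $|e^{-\ell(R)^2L}f-e^{-\ell(R^\dagger)^2L}f|\lesssim\varepsilon$ on $2R$. Following the splitting in Lemma~\ref{B-k},
\[
e^{-\ell(R)^2L}f-e^{-\ell(R^\dagger)^2L}f
=e^{-\ell(R)^2L}\bigl(f-e^{-\ell(R^\dagger)^2L}f\bigr)
+\bigl[e^{-(\ell(R)^2+\ell(R^\dagger)^2)L}-e^{-\ell(R^\dagger)^2L}\bigr]f,
\]
the second ``defect'' term is controlled only by \eqref{bmo1}, which gives $c(1+\log K)\|f\|_{{\rm BMO}_L(X)}\lesssim\lambda$, \emph{not} $\varepsilon$; recall that $\varepsilon$ may be arbitrarily small while $\|f\|_{{\rm BMO}_L(X)}$ stays fixed (this is exactly what happens when $f\in L^\infty$). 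Your assertion that these defects ``collapse along a chain'' is not justified: along a maximal refinement chain there are $N\sim\lambda/\varepsilon$ such terms, each of size $\sim\|f\|_{{\rm BMO}_L(X)}$, and the sum $\sum_j\bigl[e^{-(s_j+t_j)L}-e^{-t_jL}\bigr]f$ has no visible telescoping or cancellation. Consequently $|a_R|=\frac{\lambda}{\varepsilon}|e^{-\ell(R)^2L}f-e^{-\ell(R^\dagger)^2L}f|$ is only $\lesssim\lambda^2/\varepsilon$, and \eqref{size condition} fails. The interleaving with boundary shells is also extraneous to the argument and does not help here.

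The paper's mechanism is different and avoids this obstruction. The refinement (Lemma~\ref{lem3.3}) is purely measure--theoretic: one inserts $m\sim\lambda/\varepsilon'$ layers $E_{k,i}$ between $E_{k-1}$ and $E_k$ by repeatedly choosing, for each $Q_k$, the smallest ancestor $Q_k'$ with $2^{-n-1}\mu(Q_k')\le\mu(E_k\cap Q_k')\le 2^{-1}\mu(Q_k')$; there is no re-stopping and no boundary layer. Since the intermediate cubes $Q_{k,i}$ are ancestors of stopped cubes, they fail the selection criterion at level $\lambda$, and the argument of Lemma~\ref{B-k} gives only $|e^{-\ell(Q_{k,i})^2L}f-e^{-\ell(Q_{k,i}^\ast)^2L}f|\le C\lambda$ (see~\eqref{emm}), which is all that is needed. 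The prefactor $\varepsilon/\lambda$ does \emph{not} come from small coefficients but from an averaging trick: one replaces $e^{-\ell(Q_k)^2L}$ by $E_{Q_k,L}:=\frac{1}{m}\sum_{i=0}^m e^{-\ell(Q_{k,i})^2L}$ in the decomposition of Proposition~\ref{lem3.2}, so that $h=\frac{1}{m}\sum_{k,i,Q_{k,i}}(e^{-\ell(Q_{k,i})^2L}f-e^{-\ell(Q_{k,i}^\ast)^2L}f)\chi_{Q_{k,i}}$ and the factor $1/m\sim\varepsilon'/\lambda$ supplies the prefactor. The bounded remainder $g''+g'''$ is handled by observing that on the smallest cubes $Q_k$ the averaged and un-averaged telescopes agree exactly, while off those cubes there are at most $m$ overlapping contributions each of size $\lesssim\lambda/m$.
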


 To begin with, we first adjust the graded sequence $\{E_k:=\cup_{Q_k\in\G_k}Q_k\}_{k}$ by adding more cubes into it.
 Recall $\G_k$ is defined by \eqref{Gk} and ${Q_k^\ast}\in \G_{k-1}$ is the dyadic cube containing $Q_k\in \G_k$.
  We need the following lemma.

\begin{lemma}\label{lem3.3}
For $2^{-\lambda/\varepsilon'}$-graded sequence $\{E_k:=\cup_{Q_k\in\G_k}Q_k\}_{k}$, we can  construct a new $2^{-1}$-graded sequence $\{E_{k,i}\}$, $0\leq i\leq m,k\geq 1$,  such that
$$
E_k\subset E_{k,m-1}\subset E_{k,m-2}\subset \ldots\subset E_{k,1}\subset E_{k-1},
$$
 where we denote by $E_{k,0}=E_{k-1}$ and $E_{k,m}=E_k$.
\end{lemma}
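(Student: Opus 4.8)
The goal is to interpolate between two consecutive levels $E_{k-1}$ and $E_k$ of a $2^{-\lambda/\varepsilon'}$-graded sequence by inserting a chain of intermediate sets, each a union of dyadic cubes, so that the density ratio drops by at most a factor of $2$ at each step. Since $\mu(E_k\cap Q)\le 2^{-\lambda/\varepsilon'}\mu(Q)$ for every parent cube $Q\in\G_{k-1}$ with $Q\supset$ the relevant child cubes, and $2^{-\lambda/\varepsilon'}$ is (for $\lambda$ large relative to $\varepsilon'$) a very small number, we may choose the integer $m$ so that $2^{-m}\le 2^{-\lambda/\varepsilon'}\le 2^{-m+1}$, i.e. $m$ is essentially $\lceil\lambda/\varepsilon'\rceil$. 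The plan is then, working separately inside each $Q_{k-1}\in\G_{k-1}$, to define $E_{k,i}\cap Q_{k-1}$ for $1\le i\le m-1$ by a Calder\'on--Zygmund / stopping-time selection applied to the indicator $\chi_{E_k}$ at threshold $2^{-1}$: having constructed $E_{k,i-1}$, let $E_{k,i}$ be the union of the maximal dyadic subcubes $R\subset E_{k,i-1}$ for which $\mu(E_k\cap R)>2^{-1}\mu(R)$ — equivalently, one runs the standard stopping time for $\chi_{E_k}$ relative to the collection of dyadic cubes contained in $E_{k,i-1}$ at height $1/2$. By maximality the selected cubes are disjoint and contained in $E_{k,i-1}$, so condition (1) (each $E_{k,i}$ is a union of disjoint dyadic cubes) and the nesting $E_{k,i}\subset E_{k,i-1}$ hold; and the stopping condition gives $\mu(E_{k,i}\cap R')\le 2^{-1}\mu(R')$ for every selected cube $R'$ of the previous generation, which is exactly the $2^{-1}$-graded condition (3).

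First I would verify the two endpoint normalizations: set $E_{k,0}:=E_{k-1}$ by fiat, and check that after $m-1$ iterations one indeed reaches $E_k$, so that one can set $E_{k,m}:=E_k$ consistently. The point is a measure-decay estimate: each stopping step shrinks the measure of the portion of $E_{k-1}$ under consideration that is \emph{not} yet inside $E_k$. Concretely, a Lebesgue-differentiation argument shows that $\mu(E_{k,m}\setminus E_k)=0$ once $2^{-m}$ drops below the density bound $2^{-\lambda/\varepsilon'}$, because $E_k$ is itself a union of dyadic cubes and any point of density $1$ for $E_k$ (which is a.e.\ point of $E_k$) is eventually captured, while conversely on $E_{k-1}\setminus E_k$ the density of $E_k$ is $0$ a.e., so such points are never selected past a certain generation. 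Thus $E_{k,m}=E_k$ up to a null set, and one verifies $E_k\subset E_{k,m-1}$ since every cube of $\G_k$ satisfies the stopping inequality $\mu(E_k\cap Q_k)=\mu(Q_k)>2^{-1}\mu(Q_k)$ at every intermediate generation, hence survives into all the $E_{k,i}$.

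Finally I would assemble the global sequence: relabel the doubly-indexed family $\{E_{k,i}:k\ge 1,\ 0\le i\le m\}$ in lexicographic order in $(k,i)$ into a single decreasing sequence, observing that the junction $E_{k,m}=E_k=E_{k+1,0}$ makes the chain genuinely nested across the change of $k$, and that the $2^{-1}$-graded property holds at every consecutive pair — within a block this is the stopping-time inequality above, and at the junction it is immediate since $E_{k+1,1}\subset E_{k+1,0}=E_k$ with density bound $2^{-1}$ again by the first stopping step of the next block. The main obstacle, and the only place requiring care, is the endpoint claim $E_{k,m}=E_k$: one must argue that the iterated $2^{-1}$-stopping for $\chi_{E_k}$ started from $E_{k-1}$ terminates \emph{exactly} at $E_k$ and not at something strictly larger, which is where the quantitative relation between $m$ and $\lambda/\varepsilon'$ and the dyadic structure of $E_k$ (so that $E_k$ equals the set where the dyadic maximal function of $\chi_{E_k}$ equals $1$) both enter; everything else is a routine unwinding of the stopping-time construction.
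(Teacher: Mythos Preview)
Your iteration stalls after a single step. You define $E_{k,i}$ as the union of the maximal dyadic subcubes $R\subset E_{k,i-1}$ with $\mu(E_k\cap R)>2^{-1}\mu(R)$; but the cubes $R'$ making up $E_{k,i-1}$ (for $i\ge 2$) were themselves selected precisely because $\mu(E_k\cap R')>2^{-1}\mu(R')$. Hence each such $R'$ is already a maximal cube satisfying the stopping criterion at the next step, so $R'\subset E_{k,i}$ and $E_{k,i}=E_{k,i-1}$ for all $i\ge 1$. In particular your claimed inequality $\mu(E_{k,i}\cap R')\le 2^{-1}\mu(R')$ is false: you get $\mu(E_{k,i}\cap R')=\mu(R')$. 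A fixed threshold $2^{-1}$ applied to the density of $E_k$ cannot produce a strictly decreasing chain; some scale must change from step to step.

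The paper's construction runs in the opposite direction, bottom-up from $E_k$. For each $Q_k\in\G_k$ it takes the \emph{smallest} ancestor $Q_k'$ whose $E_k$-density lies in the two-sided window
\[
2^{-n-1}\mu(Q_k')\le \mu(E_k\cap Q_k')\le 2^{-1}\mu(Q_k'),
\]
and (after removing redundancies) sets $E_{k,m-1}$ to be the union of these $Q_k'$; then iterates. The upper bound $2^{-1}$ gives the graded property directly. The lower bound $2^{-n-1}$ --- which compensates for the doubling constant, since the density can drop by at most a factor comparable to $2^n$ when passing from a cube to its parent --- forces $\mu(E_{k,i-1})\le 2^{n+1}\mu(E_{k,i})$, so after $m-1$ steps $\mu(E_{k,1}\cap Q_k^\ast)\le 2^{(n+1)(m-1)}\cdot 2^{-\lambda/\varepsilon'}\mu(Q_k^\ast)\le 2^{-1}\mu(Q_k^\ast)$ provided $m$ is the largest integer below $\lambda/((n+1)\varepsilon')$. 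Note this value of $m$ is smaller, by a factor $n+1$, than your choice $m\approx\lambda/\varepsilon'$; the doubling exponent must enter the count.
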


\begin{proof}
Let $m$ be the largest integer that is smaller than $\lambda/((n+1)\varepsilon')$.
The first step is that we construct $2^{-1}$-graded sequence between $E_k$ and $E_{k-1}$, that is,
$$
E_k\subset E_{k,m-1}\subset E_{k,m-2}\subset \ldots\subset E_{k,1}\subset E_{k-1}.
$$
For each $Q_k\in \G_k$, choose the smallest $Q_k'$ such that $Q_k'$ is an ancestor of $Q_k$ and satisfies
$$
2^{-n-1} \mu(Q_k')\leq \mu(E_k\cap Q_k')\leq 2^{-1} \mu(Q_k').
$$
Now consider the family set of $\{Q_k'\}$ constructed from all $Q_k\in \G_k$. Delete those cubes  which are from $\{Q_k'\}$ and contained in another cube from $\{Q_k'\}$. Denote by $\mathcal G_{k,m-1}$ the set of $\{Q_k'\}$ after deletion. Then
$$
E_{k,m-1}:=\bigcup_{Q\in \G_{k,m-1}} Q.
$$
Iterating the above process, we get $E_{k,m-2}$ from $E_{k,m-1}$ and all other $E_{k, i}$  and the corresponding sets of cubes $\mathcal G_{k,i}$, $i=1,2,\ldots,m-1$. These $E_{k,i}$ are $2^{-1}$-graded sequence automatically from the construction. What remains to show is that it is still $2^{-1}$-graded from $E_{k,1}$ to $E_{k-1}$, that is,
$$
\mu(E_{k,1}\cap {Q_k^\ast})\leq 2^{-1} \mu({Q_k^\ast}).
$$
In fact, since $E_k$ is $2^{-\lambda/\varepsilon'}$-graded, that is,
$$
\mu(E_{k}\cap {Q_k^\ast})\leq 2^{-\lambda/\varepsilon'} \mu({Q_k^\ast}),
$$
we obtain that
\begin{align*}
\mu(E_{k,m-1}\cap {Q_k^\ast})=\sum_{Q_k'\subset {Q_k^\ast}} \mu(Q_k')\leq \sum_{Q_k'\subset {Q_k^\ast}}  2^{n+1}\mu(E_k\cap Q_k')
\leq 2^{n+1} \mu({Q_k^\ast}\cap E_k).%\leq 2^n\gamma^{m-1} |{Q_k^\ast}|.
\end{align*}
Inductively, we have
$$
\mu(E_{k,1}\cap {Q_k^\ast})\leq 2^{(n+1)(m-1)} \mu({Q_k^\ast}\cap E_k)\leq  2^{(n+1)(m-1)-\lambda/\varepsilon'}\mu({Q_k^\ast})\leq 2^{-1}\mu({Q_k^\ast}).
$$
The proof is complete.
\end{proof}

\medskip

We now  prove  Proposition~\ref{mainth2}. With the new $2^{-1}$-graded sequence $\{E_{k,i}\}$, $0\leq i\leq m,k\geq 1$,
based on the decomposition in Proposition~\ref{lem3.2}, we will add   more $L^\infty$ functions in   $h_{Q_0}(x)=\sum_Q a_Q(x)\chi_Q(x)$  to lower the oscillation. However, before we do that, because $a_Q(x)$ is not a constant, we should rewrite the decomposition in Proposition~\ref{lem3.2} by some kind of average of $a_Q(x)$.

\begin{proof}[Proof of Proposition~\ref{mainth2}]
It follows from  Proposition~\ref{lem3.2} that we have a decomposition of function $(f-e^{-\ell(Q_0)^2L}f)\chi_{Q_0}=g_{Q_0}+h_{Q_0}$
where $h_{Q_0}$ is a $2^{-\lambda/\varepsilon'}$-graded sequence function. Based on this decomposition and the new $2^{-1}$-graded sequence of $\{E_{k,i}\}$ constructed in Lemma~\ref{lem3.3}, we construct a new decomposition of function $(f-e^{-\ell(Q_0)^2L}f)\chi_{Q_0}$.
Let $m$ be the largest integer that is smaller than $\lambda/((n+1)\varepsilon')$. Denote by $Q_{k,i}$ the related cubes in $E_{k,i}$ such that $Q_k\subset Q_{k,i}$. And denote by $Q_{k,i}^\ast$ the cube $Q_{k-1,i}$ such that $Q_{k,i}\subset Q_{k-1}$.

Define
$$
Q_{0}^{i}:=Q_0,\quad\quad 0\leq i\leq m
$$
and
$$
 E_{Q_k,L}:=\frac{1}{m}\sum_{i=0}^m e^{-\ell(Q_{k,i})^2L}, \quad\quad k\geq 0.
$$

Then we replace $e^{-\ell(Q_k)^2L}$ in \eqref{e3.5} by $E_{Q_k,L}$ to get
\begin{align}%\label{eq:3.3}
 (f-e^{-\ell(Q_0)^2L}f)\chi_{Q_0} =:g''+g'''+h\nonumber,
\end{align}
where
\begin{eqnarray*}
g''&=&\sum_{k\geq 1 } \sum_{Q_{k-1}\in \G_{k-1}}
	 (f-E_{Q_{k-1},L}f) \chi_{Q_{k-1}\backslash\cup_{Q_k\in\G_k} Q_k},\\
g'''&=&\sum_{k\geq 1} \sum_{Q_{k}\in \G_{k}} (E_{Q_k,L}f-E_{{Q_k^\ast},L}f) \chi_{Q_k}\\
& &-\sum_{k\geq 1}\sum_{0\leq i\leq m} \sum_{Q_{k,i}\in \G_{k,i}} \frac{e^{-\ell(Q_{k,i})^2L}f-e^{-\ell({Q_{k,i}^\ast})^2L}f}{m}\chi_{Q_{k,i}}
\end{eqnarray*}
and
$$
h=\frac{1}{m}\sum_{k\geq 1} \sum_{0\leq i\leq m} \sum_{Q_{k,i}\in \G_{k,i}} \big({e^{-\ell(Q_{k,i})^2L}f-e^{-\ell({Q_{k,i}^\ast})^2L}f}\big)\chi_{Q_{k,i}}.
$$

Next we shall prove $\|g''\|_\infty\leq C\lambda$ and $\|g'''\|_\infty\leq C\lambda$. Before the proof, we note that the cube $Q_{k,i}$, $i\neq 0, m$, is an ancestor of some cube $Q_k$. So $Q_{k,i}$ does not satisfy the
selection criterion, nor the  ancestors of $Q_{k,i}$ except ${Q_k^\ast}$. Thus it follows the similar argument as in the proof of Lemma~\ref{B-k} that for $x\in 2Q_{k,i}$,
\begin{eqnarray}\label{emm}
|e^{-\ell(Q_{k,i})^2L}f(x)-e^{-\ell({Q_k^\ast})^2L}f(x)|\leq C\lambda.
\end{eqnarray}
 For $g''$, note that the supports of every function in the summation of $g''$ do not intersect and it follows from Lemmas \ref{B-k} and \ref{D-k} and estimate \eqref{emm} that for $x\in Q_{k}\backslash\cup_{Q_{k+1}\in\G_{k+1}} Q_{k+1}$,
\begin{align*}
 |f(x)-E_{{Q_k^\ast},L}f(x)|
&\leq \frac{1}{m}\sum_{i=0}^m |f(x)-e^{-\ell(Q_{k,i})^2L}f(x)|\\
&\leq \frac{1}{m}\sum_{i=0}^m \Big\{|f(x)-e^{-\ell(Q_k)^2L}f(x)|+|e^{-\ell(Q_{k,i})^2L}f(x)-e^{-\ell({Q_k^\ast})^2L}f(x)|\\
&\hskip 4cm+|e^{-\ell(Q_k)^2L}f(x)-e^{-\ell({Q_k^\ast})^2L}f(x)|\Big\}\\
&\leq \frac{1}{m}\sum_{i=0}^m C2^n \lambda\\
& \leq C\lambda.
\end{align*}
For $g'''$, note that if $x\in Q_k$, then $x\in Q_{k,i}$ for all $i$, and thus
$$
\sum_{k\geq 1} \sum_{Q_{k}\in \G_{k}} (E_{Q_k,L}f-E_{{Q_k^\ast},L}f) \chi_{Q_k}
-\sum_{k\geq 1} \sum_{0\leq i\leq m} \sum_{Q_{k,i}\in \G_{k,i}}\frac{e^{-\ell(Q_{k,i})^2L}f-e^{-\ell({Q_{k,i}^\ast})^2L}f}{m} \chi_{\cup_{Q_{k}\subset Q_{k,i}}Q_k}=0.
$$
Note that for different $k$, $Q_{k,i}\backslash \cup_{Q_{k}\subset Q_{k,i}}Q_k$ does not intersect each other.
With a fixed $x$ and the same $k$, there are at most number $m$ of $Q_{k,i}$s such that $x\in Q_{k,i}$. Thus  it follows from  Lemmas \ref{B-k} and \ref{D-k} and estimate \eqref{emm} that
\begin{eqnarray}\label{eq:a_Q} \hspace{1cm}
&&\hspace{-1cm} \sum_{k\geq 1}  \sum_{0\leq i\leq m}\sum_{Q_{k,i}\in \G_{k,i}} \frac{e^{-\ell(Q_{k,i})^2L}f-e^{-\ell({Q_{k,i}^\ast})^2L}f}{m}\chi_{Q_{k,i}\backslash \cup_{Q_{k}\subset Q_{k,i}}Q_k}\nonumber\\
&\leq& \sup_{x\in Q_{k,i}\backslash \cup_{Q_{k}\subset Q_{k,i}}Q_k} |e^{-\ell(Q_{k,i})^2L}f(x)-e^{-\ell({Q_{k,i}^\ast})^2L}f(x)|\nonumber\\
&\leq& \sup_{x\in Q_{k,i}\backslash \cup_{Q_{k}\subset Q_{k,i}}Q_k} |e^{-\ell(Q_{k,i})^2L}f(x)-e^{-\ell({Q_k^\ast})^2L}f(x)|+|e^{-\ell({Q_k^\ast})^2L}f(x)-e^{-\ell(Q_k^{\ast\ast})^2L}f(x)|\nonumber\\
&&+
 |e^{-\ell({Q_{k,i}^\ast})^2L}f(x)-e^{-\ell(Q_k^{\ast\ast})^2L}f(x)|\nonumber\\
&\leq& C\lambda,
\end{eqnarray}
where $Q_k^{\ast\ast}=(Q_k^{\ast})^\ast\in \G_{k-2}$ such that $Q_k\subset Q_k^\ast\subset Q_k^{\ast\ast}$.
\medskip

Now we have
$$
\big(f(x)-e^{-\ell(Q_0)^2L}f(x)\big)\chi_{Q_0}(x)=g_{Q_0}(x)+h_{Q_0}(x)
$$
with
$g_{Q_0}=g''+g'''$ and $h_{Q_0}=h$. Based on the estimates of $g''$ and $g'''$, we have
$$
\|g_{Q_0}\|_{L^\infty}\leq A_1\lambda
$$
For $h_{Q_0}$, we write
$$
h_{Q_0}=h=\frac{1}{m}\sum_{k\geq 1} \sum_{0\leq i\leq m}\sum_{Q_{k,i}\in \G_{k,i}} \big(e^{-\ell(Q_{k,i})^2L}f-e^{-\ell({Q_{k,i}^\ast})^2L}f\big)\chi_{Q_{k,i}}=\frac{\varepsilon'}{\lambda} \sum_Q a_{Q}(x)\chi_{Q}(x),
$$
where $Q=Q_{k,i}$ runs over all the $2^{-1}$-graded sequence which we have already constructed and
$$
a_{Q}(x):=(n+1) \big(e^{-\ell(Q_{k,i})^2L}f(x)-e^{-\ell({Q_{k,i}^\ast})^2L}f(x)\big).
$$

A similar argument as in \eqref{eq:a_Q}  shows that  \eqref{size condition} holds. From \eqref{bmo2} of Lemma~\ref{lem3.6},   \eqref{smooth condition1} follows readily.
 The proof of Proposition~\ref{mainth2} is complete.
\end{proof}

\medskip
\section{Proof of Theorem~\ref{th1.1}: step three}
\setcounter{equation}{0}

In this section we  give
a decomposition   \eqref{e1.13} of    $f\in {\rm BMO}_L(X)$
  from  the dyadic result in Section 3.

\begin{theorem}\label{mainth1}
	Suppose $L$ is an operator
	on $L^2(X)$ satisfying \textbf{(H1)} and \textbf{(H2)}.
	There are constants $A_1$ and $A_2$, depending only on the doubling constants as in \eqref{doubling2} and the Gaussian upper bound, such that if $f\in \BL(X)$ and if $\varepsilon>\varepsilon_L(f)$, then
	$$
	f(x)=g(x)+h(x)
	$$
	with
	$$
	\|g\|_{L^\infty}\leq A_1\lambda_0
	$$
	and
\begin{eqnarray}\label{4.0}
	\|h\|_{\BL(X)}\leq A_2 \varepsilon.
\end{eqnarray}
\end{theorem}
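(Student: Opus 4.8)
The strategy is to globalize the dyadic decomposition of Proposition~\ref{mainth2} using a randomization over dyadic lattices to eliminate the defects of working with a single lattice. First I would fix $\varepsilon>\varepsilon_L(f)$ and set $\lambda=\lambda_0(\varepsilon,f)$ (up to a fixed multiplicative constant), so that Proposition~\ref{mainth2} applies for every large dyadic cube $Q_0$ in a given lattice $\mathcal D(\omega)$. For each $\omega\in\Omega$ and each $Q_0\in\mathcal D(\omega)$ we obtain $(f-e^{-\ell(Q_0)^2L}f)\chi_{Q_0}=g_{Q_0}^{(\omega)}+h_{Q_0}^{(\omega)}$ with $\|g_{Q_0}^{(\omega)}\|_\infty\le A_1\lambda$ and $h_{Q_0}^{(\omega)}=\tfrac{\varepsilon}{\lambda}\sum_Q a_Q^{(\omega)}\chi_Q$ over a $\gamma$-graded family, where $|a_Q^{(\omega)}|\le C\lambda$ and $|La_Q^{(\omega)}|\le C\ell(Q)^{-2}\lambda$ on $2Q$. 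To pass from $Q_0$ to all of $X$ I would let $\ell(Q_0)\to\infty$ along an exhausting increasing sequence; the term $e^{-\ell(Q_0)^2L}f$ is controlled on compact sets by \eqref{bmo1} (it differs from $e^{-tL}f$ by at most $C\log(\cdot)\|f\|_{\bmo}$ but in the limit one shows it can be absorbed into the $L^\infty$ part after subtracting a suitable constant, or directly kept bounded on each fixed ball), so in the limit one gets a decomposition $f=g^{(\omega)}+h^{(\omega)}$ with $\|g^{(\omega)}\|_\infty\le A_1\lambda_0$ and $h^{(\omega)}=\tfrac{\varepsilon}{\lambda}\sum_Q a_Q^{(\omega)}\chi_Q^{(\omega)}$ summed over the union of graded families at all scales, still $2^{-1}$-graded on each lattice.

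The key point is then to estimate $\|h^{(\omega)}\|_{\BL(X)}$, i.e.\ to bound $\tfrac{1}{\mu(B)}\int_B|h^{(\omega)}(x)-e^{-r_B^2L}h^{(\omega)}(x)|\,d\mu(x)$ uniformly in $B$ and $\omega$ by $C\varepsilon$. Here I would split, for a fixed ball $B$ of radius $r_B$, the cubes $Q=Q_{k,i}^{(\omega)}$ appearing in $h^{(\omega)}$ into the \emph{small} cubes ($\ell(Q)\le r_B$) and the \emph{large} cubes ($\ell(Q)>r_B$). For the small cubes, since $0<t=r_B\le\ell(Q)$ fails, one instead uses property \eqref{smooth condition3.2} of Lemma~\ref{lem3.7} on those with $\ell(Q)\ge r_B$ and a direct $L^\infty$-plus-graded-sparseness estimate on the genuinely small ones: the contribution of $\sum_{\ell(Q)\le r_B}a_Q\chi_Q$ to the mean oscillation over $B$ is $\le C\lambda\cdot\tfrac{1}{\mu(B)}\sum_{\ell(Q)\le r_B}\mu(Q\cap B)$, and the $2^{-1}$-graded (sparse) structure gives $\sum_{\ell(Q)\le r_B,\,Q\cap B\ne\emptyset}\mu(Q)\le C\mu(B)$ wait — this only yields $C\lambda$, not $C\varepsilon$, so one must be more careful: the graded structure controls the measure at each level geometrically, $\mu(E_{k,i+1}\cap Q_{k,i})\le 2^{-1}\mu(Q_{k,i})$, which after summing the $m\sim\lambda/((n+1)\varepsilon')$ levels inside each original generation gives the crucial gain, so that the factor $\tfrac{\varepsilon}{\lambda}$ in front is compensated by at most $\sim\lambda/\varepsilon$ overlapping levels and the geometric decay in the remaining levels produces an absolute constant times $\varepsilon$.

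For the large cubes ($\ell(Q)>r_B$), on $B$ the function $a_Q\chi_Q$ is (on the part of $B$ inside $Q$) a smooth function with $|La_Q|\le C\ell(Q)^{-2}\lambda$, so $|(I-e^{-r_B^2L})a_Q(x)|\le C(r_B/\ell(Q))^2\lambda$ by \eqref{smooth condition3.2}; summing the geometric series $\sum_{j\ge 0}2^{-2j}$ over dyadic scales $\ell(Q)=2^j r_B$ and using that at each scale at most boundedly many such cubes meet $B$ (here the randomization enters: one takes $\E_\omega$ and uses \eqref{Rcondition} to ensure that, in expectation, $B$ is not cut by the boundary of any large $Q$, so $B$ lies inside a single $Q$ at each large scale and the boundary/overlap terms vanish in expectation), one again gets $\le C\tfrac{\varepsilon}{\lambda}\cdot\lambda=C\varepsilon$. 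The boundary contributions — the parts of $B$ that straddle $\partial Q$ for a large $Q$, where $h^{(\omega)}$ jumps and is not controlled pointwise — are the main obstacle, and they are exactly what the random lattice is designed to kill: by \eqref{Rcondition} the expected measure of $B\cap\bigcup_Q\partial_{r_B}Q$ over scales is $\le C r_B^{\eta}\sum_j 2^{-\eta j}\mu(B)$-type, which is $O(\mu(B))$ with a small constant, and one absorbs it into the bound. Taking $\E_\omega$ and invoking that $\|\E_\omega h^{(\omega)}\|_{\BL(X)}\le\E_\omega\|h^{(\omega)}\|_{\BL(X)}$ (by the triangle/convexity inequality for the $\BL$ seminorm, since $g:=\E_\omega g^{(\omega)}$ still satisfies $\|g\|_\infty\le A_1\lambda_0$), we obtain the final decomposition $f=g+h$ with $h=\E_\omega h^{(\omega)}$, $\|g\|_\infty\le A_1\lambda_0$ and $\|h\|_{\BL(X)}\le A_2\varepsilon$, completing the proof. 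The hard part, as indicated, is the uniform-in-$B$ bound $\|h\|_{\BL(X)}\le A_2\varepsilon$: controlling the boundary overlaps of large cubes (handled by the random lattice and \eqref{Rcondition}) and extracting the gain $\varepsilon$ rather than $\lambda$ from the enforced $2^{-1}$-graded (sparse) structure built in Lemma~\ref{lem3.3}.
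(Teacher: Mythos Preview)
Your proposal contains a genuine logical gap in the order of operations between the expectation $\E_\omega$ and the supremum over balls defining the $\BL$ norm. You write that you will ``estimate $\|h^{(\omega)}\|_{\BL(X)}$ \dots\ uniformly in $B$ and $\omega$'', and later invoke $\|\E_\omega h^{(\omega)}\|_{\BL}\le\E_\omega\|h^{(\omega)}\|_{\BL}$. But $\|h^{(\omega)}\|_{\BL}$ is \emph{not} bounded by $C\varepsilon$ for a fixed $\omega$: for any single lattice one can choose a ball $B$ straddling the boundary of a large cube $Q$, and the jump of $a_Q\chi_Q$ across $\partial Q$ contributes $\sim\lambda$ (not $\varepsilon$) to the oscillation over $B$. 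The paper explicitly warns (end of \S1.2) that the function $h_{Q_0}$ ``does not belong to the space ${\rm BMO}_L(X)$''. So the right-hand side $\E_\omega\|h^{(\omega)}\|_{\BL}$ is not controlled, and your use of the randomization \eqref{Rcondition} \emph{inside} an estimate you describe as ``uniform in $\omega$'' is incoherent: for fixed $\omega$ there is nothing to average. The correct order is $\sup_B\E_\omega[\cdots]$, not $\E_\omega\sup_B[\cdots]$; these are very different quantities and only the former is what you need.

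The paper resolves this by passing to the \emph{dual} formulation: it uses $(H^1_L(X))^*=\BL(X)$ and the atomic decomposition (Lemma~\ref{def1.3}) to reduce \eqref{4.0} to showing $|\langle h,a\rangle|\le C\varepsilon$ for every $(1,\infty,2)$-atom $a=L^2b$ supported in a ball $Q$. Since $\langle h,a\rangle=\langle\E_\omega h^{(\omega)},a\rangle=\E_\omega\langle h^{(\omega)},a\rangle$, the atom $a$ is fixed once and for all and the expectation sits \emph{inside}, so \eqref{Rcondition} can legitimately be applied. For large cubes $\ell(Q_k^\omega)>\ell(Q)$ one then splits $\Omega$ into $\Lambda_k=\{Q\subset\text{some }Q^\omega\in\D^k(\omega)\}$ and its complement: on $\Lambda_k$ one uses $a=L^2b$ to move one power of $L$ onto $a_{Q_k^\omega}$ and invokes \eqref{smooth condition1}, giving decay $(\ell(Q)/2^k)^2$; on $\Lambda_k^c$ one uses $\PP(\Lambda_k^c)\le C(\ell(Q)/2^k)^\eta$ with the crude bound $|\langle a_{Q_k^\omega}\chi_{Q_k^\omega},a\rangle|\le C\lambda$. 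This is precisely the mechanism you gesture at, but it only works because the ball $Q$ is pinned down by the atom before averaging. Your small-cube argument via sparseness is closer to correct, though the paper pairs against $\|a\|_{L^2}$ and uses $\sum(\mu(Q_k^\omega\cap Q)/\mu(Q))^{1/2}\le C$ rather than the vaguer overlap count you sketch. Finally, the passage $\ell(Q_0)\to\infty$ is handled in the paper by taking a weak-$*$ limit of $\sum_\alpha g_{Q_\alpha^K(\omega)}\chi_{Q_\alpha^K(\omega)}$ in $L^\infty$ and showing the residual $e^{-\ell(Q_\alpha^K)^2L}f$ term pairs to $o(1)$ against any fixed atom; your treatment of this step is too loose to be a proof.
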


In order to estimate the $\BL(X)$ norm, we will use
the fact that
$\BL(X)$ is the dual of the Hardy space $ H_L^1(X)$ associated with the operator $L$, which was  proved in \cite{DY2}. That is,
$$
(H^1_{L}(X))^{\ast}={{\rm BMO}_L(X)}.
$$
Recall that
the quadratic operators associated
with $L$ is defined by
\begin{eqnarray*}
S_{L}f(x)=\left(\int_0^{\infty}\!\!\!\!\int_{\substack{  d(x,y)<t}}
|(t^2L) e^{-t^2L} f(y)|^2 {d\mu(y)\over \mu(B(x,t))}{dt\over t}\right)^{1/2},
\quad x\in X
\end{eqnarray*}
where $f\in L^2(X)$.
Define the spaces $ H_L^1(X)$ as the completion of $L^2(X)$ in
the norms given by the $L^1$ norm of the square function
$\|f\|_{H^1_{L}(X)}:=\|S_{L}f\|_{L^1(X)}$.
The  Hardy  space $ H_L^1(X)$  associated with $L$ was first
introduced by   Auscher, Duong and McIntosh \cite{ADM}, and then Duong and Yan  \cite{DY2},
introduced Hardy and ${\rm BMO}$ spaces explicitly adapted to an
operator $L$ whose heat kernel enjoys  a pointwise Gaussian upper bound (but see also the
earlier, more specific work of Auscher and Russ \cite{AR});
see
\cite{AMR, DY1, DY2, HLMMY, HM, HMM}
and the references therein. Atom decomposition of $ H_L^1(X)$ was introduced in \cite{HLMMY}.
The following
$(1,\infty, M)$-atom associated with
an operator $L$ was  established on $\R^n$ in \cite{SY1} and on space of homogenous type in \cite{SY2}.
	Given  $M\in {\mathbb N}$,
a function $a\in L^2(X)$ is called a $(1,\infty, M)$-atom associated with
the operator $L$ if there exist a function $b\in {\mathcal D}(L^M)$
and a ball $B\subset X$ such that

\smallskip

{  (i)}\ $a=L^M b$;

\smallskip

{  (ii)}\ {\rm supp}\  $L^{k}b\subset B, \ k=0, 1, \dots, M$;

\smallskip

{  (iii)}\ $\|(r_B^2L)^{k}b\|_{L^{\infty} (X)}\leq r_B^{2M}
{\mu(B)}^{-1},\ k=0,1,\dots,M$.

%The  atomic Hardy space $H^p_{L, {\rm at}, q, M}(X)
%$ is defined as follows.

\begin{lemma} \label{def1.3}
	 Suppose $L$ is an operator
	on $L^2(X)$ satisfying \textbf{(H1)} and \textbf{(H2)}.
	For every $M\in{\mathbb N}$,
	   $f\in H^1_L(X)$ if and only if  there exist
	$\lambda_j\in{\mathbb R}$ and  $(1, \infty, M)$-atom $a_j, j=1, 2, \cdots, $ such that	
  $$
  f= \sum\limits_{j=0}^{\infty}\lambda_j
	a_j \ \ \ {\rm  in } \ L^1(X)
	$$
	and
	$$
	c\|f\|_{H^1_L(X)}\leq
\sum_{j=0}^{\infty} |\lambda_j|\leq 	C \|f\|_{H^1_L(X)}.
	$$
\end{lemma}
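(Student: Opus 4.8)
The statement is essentially that of Song--Yan \cite{SY1,SY2} (see also \cite{HLMMY}), and the plan is to prove separately the two inclusions: that every $(1,\infty,M)$-atom has $H^1_L(X)$-norm bounded by a constant depending only on $n$ and the constants in \textbf{(H1)}--\textbf{(H2)}, and conversely that every $f\in H^1_L(X)$ admits such an atomic decomposition with controlled coefficients. For the first estimate we may assume $M$ is large enough in terms of $n$, which is the only range needed here.

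For the first inclusion, let $a=L^M b$ be a $(1,\infty,M)$-atom with $b$ supported in $B=B(x_B,r_B)$; I would bound $\|S_L a\|_{L^1(X)}$ by splitting $X=4B\cup\bigcup_{j\ge2}(2^{j+1}B\setminus 2^jB)$. On $4B$ one uses H\"older's inequality, the normalization $\|a\|_{L^2}\lesssim\mu(B)^{-1/2}$, and the $L^2$-boundedness of $S_L$, which is immediate from the spectral theorem since $\int_0^\infty(t^2\lambda)^2e^{-2t^2\lambda}\,\tfrac{dt}{t}$ is a finite absolute constant and $\int_{\{x:d(x,y)<t\}}\tfrac{d\mu(x)}{\mu(B(x,t))}$ is uniformly bounded. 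Away from $4B$ one exploits the cancellation through the identity $(t^2L)e^{-t^2L}a=t^{-2M}(t^2L)^{M+1}e^{-t^2L}b$ and splits the $t$-integral at $t\sim2^jr_B$: for $t\lesssim2^jr_B$ the time-derivative Gaussian bounds \eqref{e3.10} for $(t^2L)^{M+1}e^{-t^2L}$ supply a factor $\exp\!\big(-c(2^jr_B/t)^2\big)$ which, together with $\|b\|_{L^1}\le r_B^{2M}$, beats every polynomial; for $t\gtrsim2^jr_B$ the gain $t^{-2M}$ together with $\|(t^2L)^{M+1}e^{-t^2L}b\|_{L^2}\lesssim\|b\|_{L^2}\lesssim r_B^{2M}\mu(B)^{-1/2}$ yields, after H\"older on the annulus, a bound $\lesssim2^{(n/2-2M)j}$, summable once $M>n/4$. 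Hence $\|a\|_{H^1_L(X)}\le C$ uniformly. Given then $f=\sum_j\lambda_ja_j$ in $L^1(X)$, the partial sums are Cauchy in $H^1_L(X)$ and so converge there by completeness; since $H^1_L(X)$ embeds continuously in $L^1(X)$ the two limits coincide, giving $f\in H^1_L(X)$ with $\|f\|_{H^1_L(X)}\le C\sum_j|\lambda_j|$.

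For the converse, observe that $f\in H^1_L(X)$ says precisely that $F(y,t):=(t^2L)e^{-t^2L}f(y)$ lies in the tent space $T^1\big(X\times(0,\infty)\big)$ with $\|F\|_{T^1}=\|S_Lf\|_{L^1(X)}=\|f\|_{H^1_L(X)}$. I would invoke the atomic decomposition of $T^1$ on spaces of homogeneous type (Coifman--Meyer--Stein): $F=\sum_j\lambda_jA_j$ with $\sum_j|\lambda_j|\lesssim\|F\|_{T^1}$ and each $A_j$ a $T^1$-atom supported in a tent $\widehat{B_j}$. Plugging this into a Calder\'on reproducing formula $f=c_M\int_0^\infty(t^2L)^Me^{-t^2L}F(\cdot,t)\,\tfrac{dt}{t}$ — valid in $L^2(X)$ by the spectral theorem, after writing $(t^2L)^Me^{-t^2L}\big((t^2L)e^{-t^2L}f\big)=(t^2L)^{M+1}e^{-2t^2L}f$ and computing the resulting Gamma-type constant — produces $f=c_M\sum_j\lambda_ja_j$ with $a_j=L^Mb_j$ and $b_j=\int_0^{r_{B_j}}t^{2M}e^{-t^2L}A_j(\cdot,t)\,\tfrac{dt}{t}$; the size bounds (iii) for $L^kb_j$, $0\le k\le M$, then follow routinely from $\|t^{2M}L^ke^{-t^2L}\|_{L^2\to L^\infty}$-type estimates and the $T^1$-atom normalization of $A_j$. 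The main obstacle is the support condition (ii): $e^{-t^2L}$ does not preserve supports, so the $b_j$ above is only \emph{concentrated} near $B_j$. I would resolve this exactly as in \cite{SY1,SY2,HLMMY}: since the Gaussian bound \textbf{(H2)} implies the Davies--Gaffney estimate, the wave operator $\cos(s\sqrt L)$ has finite propagation speed, so one may replace $e^{-t^2L}$ in the synthesis by a suitable operator $(t^2L)^M\phi(t\sqrt L)$ with $\phi$ even, Schwartz and $\widehat\phi$ supported in $[-1,1]$; then $\phi(t\sqrt L)$ has kernel supported in $\{d(x,y)\le t\}$, and since $A_j(\cdot,t)$ lives in $B_j$ while $t<r_{B_j}$, the corresponding $b_j$ is supported in $2B_j$, so after rescaling the $\lambda_j$ by a fixed constant the $a_j$ become genuine $(1,\infty,M)$-atoms. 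Convergence of $\sum_j\lambda_ja_j$ to $f$ in $L^1(X)$ and the remaining lower bound $\|f\|_{H^1_L(X)}\lesssim\sum_j|\lambda_j|$ then follow from the first part applied to the partial sums. I expect the delicate point to be the construction of $\phi$ and the verification that this modified synthesis operator still reproduces $f$, which requires an auxiliary $L^2$-approximation since the integral need not converge unconditionally for general $f\in L^2(X)$.
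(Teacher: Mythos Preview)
The paper itself gives no proof of this lemma: its entire argument is the sentence ``For the proof, we refer the reader to \cite[Theorem~2.5]{HLMMY}.'' Your sketch is a correct outline of precisely the argument in the cited literature \cite{HLMMY,SY1,SY2}: the uniform $H^1_L$-bound on atoms via annular square-function estimates, and the converse via the Coifman--Meyer--Stein tent-space decomposition combined with finite propagation speed of $\cos(s\sqrt{L})$ (which follows from \textbf{(H2)} through Davies--Gaffney) to produce genuinely compactly supported $b_j$. The one place your sketch falls short of the stated lemma is the restriction $M>n/4$ in the first direction; this is the threshold one obtains under Davies--Gaffney alone, as in \cite{HLMMY}, but under the full Gaussian hypothesis \textbf{(H2)} one can use the pointwise kernel bound \eqref{e3.10} directly in the large-$t$ regime rather than passing through $L^2$ and H\"older on the annulus, which removes the $2^{jn/2}$ loss and yields the bound for every $M\ge1$. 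In any case the only use of the lemma in this paper is the atomic-decomposition direction (to reduce the estimate of $\|h\|_{{\rm BMO}_L(X)}$ to uniform pairings against $(1,\infty,1)$- and $(1,\infty,2)$-atoms), and your argument for that direction is valid for all $M\ge1$.
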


\begin{proof}
For the proof, we refer the reader to  \cite[Theorem 2.5]{HLMMY}.
 \end{proof}

 Now we  prove Theorem~\ref{mainth1}.

\begin{proof}[Proof of Theorem~\ref{mainth1}]
Now for dyadic system $\omega\in \Omega$, denote by $Q_\alpha^K(\omega)\in \mathcal D^K(\omega)$ the dyadic cube with level $K$ in this dyadic system.
 Note that $\cup_\alpha Q_{\alpha}^K(\omega)=X$. $f$ can be decomposed as
 $$
 f(x)=\sum_\alpha f(x)\chi_{Q_\alpha^K(\omega)}(x).
 $$
In each $Q_{\alpha}^K(\omega)$,
it follows from Theorem~\ref{mainth2} that we have a decomposition of $f$ as
$$
\left(f(x)-e^{-\ell(Q_{\alpha}^K(\omega))^2L}f(x)\right)\chi_{Q_\alpha^K(\omega)}(x)=
g_{Q_{\alpha}^K(\omega)}(x)+h_{Q_{\alpha}^K(\omega)}(x).
$$

Then we have
\begin{align}\label{e4.0}
f=\sum_{\alpha} \left(e^{-\ell(Q_{\alpha}^K(\omega))^2L}f\chi_{Q_{\alpha}^K(\omega)}+
g_{Q_{\alpha}^K(\omega)}\chi_{Q_{\alpha}^K(\omega)}+h_{Q_{\alpha}^K(\omega)}\chi_{Q_{\alpha}^K(\omega)} \right).
\end{align}
 Here $\|g_{Q_{\alpha}^K(\omega)}\|_\infty<C \lambda$ and $h_{Q_{\alpha}^K(\omega)}=\frac{\varepsilon}{\lambda}\sum_{Q^\omega} a_{Q^\omega}\chi_{Q^\omega}$, where $a_{Q^\omega}$ and $Q^\omega$ are as in Theorem~\ref{mainth2}.  Recall that $Q^\omega$ is in the $2^{-1}$-graded sequence.
%So for any ball $Q$
%\begin{align}\label{Qcondition}
%\sum_{\ell(Q^\omega)\leq \ell(Q)} \frac{\mu(Q\cap Q^\omega)}{\mu(Q)}\leq C_{n,\gamma},
%\end{align}
%where $C_{n,\gamma}$ is absolute constant depending only on $n$ and $\gamma$.
We also recall the properties of function $a_{Q^\omega}(x)$:

i) size condition: for almost all $x\in 2Q^\omega$,
\begin{align}\label{size condition2}
|a_{Q^\omega}(x)|\leq C\lambda;
\end{align}

ii) smooth condition: for almost all $x\in 2Q^\omega$,
\begin{align}\label{smooth condition}
|La_{Q^\omega}(x)|\leq C\ell({Q^\omega})^{-2}\lambda,
\end{align}
%iii) support condition: for $0<t\leq \ell({Q^\omega})$ and almost all $x\in 2Q^\omega$,
%\begin{align}\label{support condition}
%\int_X h_t(x,y)|a_{Q^\omega}(y)|d\mu(y)\leq C\lambda,
%\end{align}
where $C$ just depends on $n$ in the doubling condition~\eqref{doubling2}.

Since $\sum_{\alpha}g_{Q_{\alpha}^K(\omega)}\chi_{Q_{\alpha}^K(\omega)}$ is uniformly bounded in $L^\infty(X)$, we have weak $\ast$ type limit $g^\omega$ for a subsequence of $K\to \infty$. For simplicity, we also denote by $K$ this subsequence. Define $h^\omega:=f-g^\omega$. Then
$$
f=g+h:=\E_\omega g^\omega+\E_\omega h^\omega.
$$
Note that $g\in L^\infty(X)\subset \bmo$ and $f\in \bmo$ imply that $h\in \bmo$. So what we need to prove is $\|h\|_{\bmo}\leq C\varepsilon$ and equivalently by duality  $(H_L^1(X))^*=\bmo$
$$
|\langle h,a\rangle|\leq C\varepsilon
$$
uniformly for all $(1,\infty, 2)$ atoms $a$.
Denote by $h^\omega_K=\sum_\alpha h_{Q_{\alpha}^K(\omega)}\chi_{Q_{\alpha}^K(\omega)}$. It follows from the definition of $h$
and equality~\eqref{e4.0} that
\begin{align}\label{e4.2}
\langle (h-\E_\omega h^\omega_K),a\rangle&=  \langle (f-\E_\omega g^\omega-\E_\omega h^\omega_K),a\rangle\nonumber\\
&= \Big\langle \E_\omega \Big(\sum_{\alpha}
g_{Q_{\alpha}^K(\omega)}\chi_{Q_{\alpha}^K(\omega)}- g^\omega\Big),a\Big\rangle+ \Big\langle \E_\omega\Big(\sum_{\alpha}e^{-\ell(Q_{\alpha}^K(\omega))^2L}f\chi_{Q_{\alpha}^K(\omega)}\Big),a\Big\rangle
\nonumber\\
&=:{\rm I}+{\rm II}.
\end{align}
%For term ${\rm I}$, note that $a\in L^1(X)$ and
%$$
%\|(1-A_Q)a\|_{L^1(X)}\leq 2\|a\|_{L^1(X)}\leq 2\mu(Q)^{1/2}.
%$$
For the term ${\rm I}$, note that $g^\omega$ is the weak $\ast$ type limit of $\sum_{\alpha}g_{Q_{\alpha}^K(\omega)}\chi_{Q_{\alpha}^K(\omega)}$.
Thus for any $\varepsilon>0$, taking large enough $K$ gives
\begin{align*}
\left|\Big\langle \E_\omega\Big(\sum_{\alpha}
g_{Q_{\alpha}^K(\omega)}\chi_{Q_{\alpha}^K(\omega)}- g^\omega\Big),a\Big\rangle\right|&\leq  \E_\omega \left|\Big\langle
g_{Q_{\alpha}^K(\omega)}\chi_{Q_{\alpha}^K(\omega)}- g^\omega,a\Big\rangle\right|\\
&\leq \frac{1}{100}\varepsilon.
\end{align*}
Denote by $Q$ the support of the atom $a$. Set
\begin{eqnarray*}
    &&\La_k := \{\omega\in\Omega: \mbox{ there exists a cube }Q^\omega\in\D^{k+1}(\omega)
        \mbox{ such that }Q\subset Q^\omega\},\\
    &&\La := \bigcap_{k\in\Z}(\La_k)^c
        = \{\omega\in\Omega:\mbox{ there is no cube\ } Q^\omega\in\D(\omega)
        \mbox{\ that contains\ } Q   \}.
\end{eqnarray*}
Note that $(\La_k)^c\supset (\La_{k-1})^c$. By
condition~\eqref{Rcondition},
$$
\PP(\La)=\lim_{k\rightarrow -\infty}P((\La_k)^c)\leq\lim_{k\rightarrow -\infty}
\Big(\frac{\ell(Q)}{2^{-k}}\Big)^\eta=0.
$$
To handle the term ${\rm II}$, we will fix the value of $\widehat{K}=\widehat{K}(\omega)$, as follows.
For almost every $\omega\in \La^c$ there is a cube $Q^\omega\in
\D^{\widehat{K}}(\omega)$ that contains $Q$.
Note that $a\in L^1$ and has compact support and $f\in L_{loc}^1$. It is easy to see that
$$
\E_\omega\left(\sum_{\alpha}\Big\langle \Big|e^{-\ell(Q_{\alpha}^K(\omega))^2L}f\chi_{Q_{\alpha}^K(\omega)}\Big|,|a|\Big\rangle\right)<\infty.
$$
It follows from $\PP(\La)=0$ that
\begin{align*}
\Big\langle \E_\omega\Big(\sum_{\alpha}e^{-\ell(Q_{\alpha}^K(\omega))^2L}f\chi_{Q_{\alpha}^K(\omega)}\Big),a\Big\rangle&=\E_\omega\left(\sum_{\alpha}\Big\langle e^{-\ell(Q_{\alpha}^K(\omega))^2L}f\chi_{Q_{\alpha}^K(\omega)},a\Big\rangle\right)\\
&=\int_{\La^c}\sum_{\alpha}\Big\langle e^{-\ell(Q_{\alpha}^K(\omega))^2L}f\chi_{Q_{\alpha}^K(\omega)},a\Big\rangle d\omega\\
&=\int_{\La^c}\Big\langle e^{-\ell(Q_{\alpha,\widehat K}^\omega)^2L}f\chi_{Q_{\alpha,\widehat K}^\omega},a\Big\rangle d\omega\\
&=\int_{\La^c}\Big\langle e^{-\ell(Q_{\alpha,\widehat K}^\omega)^2L}f,a\Big\rangle d\omega.
\end{align*}
Let $a=L^2b$ as in the definition  of  $(1,\infty, 2)$. It is easy to check that $\ell(Q)^{-2}Lb$ is $(1,\infty,1)$ atom and thus
$\|\ell(Q)^{-2}Lb\|_{H_L^1}\leq 1$, which shows
\begin{align*}
\Big|\Big\langle e^{-\ell(Q_{\alpha,\widehat K}^\omega)^2L}f,a\Big\rangle\Big|&=\Big|\Big\langle Le^{-\ell(Q_{\alpha,\widehat K}^\omega)^2L}f(x),Lb\Big\rangle\Big|\\
&\leq \frac{\ell(Q)^2}{\ell(Q_{\alpha,\widehat K}^\omega)^2}\Big|\Big\langle \ell(Q_{\alpha,\widehat K}^\omega)^2Le^{-\ell(Q_{\alpha,\widehat K}^\omega)^2L}f(x),\ell(Q)^{-2}Lb\Big\rangle\Big|\\
&\leq \frac{\ell(Q)^2}{\ell(Q_{\alpha,\widehat K}^\omega)^2} \|f\|_{\bmo}\|\ell(Q)^{-2}Lb\|_{H_L^1}\\
&\leq \frac{\ell(Q)^2}{\ell(Q_{\alpha,\widehat K}^\omega)^2} \|f\|_{\bmo},
\end{align*}
since the multiplier operator $t^2Le^{-t^2L}$ is bounded on $H_L^1$ and by duality on $\bmo$ uniformly for $t>0$, see for example \cite[Theorem 1.1]{DY3}. Thus for any $\varepsilon>0$, taking large enough $K$ gives
$$
|{\rm II}|=\Big|\Big\langle \E_\omega\Big(\sum_{\alpha}e^{-\ell(Q_{\alpha}^K(\omega))^2L}f\chi_{Q_{\alpha}^K(\omega)}\Big),a\Big\rangle\Big|\leq \frac{1}{100}\varepsilon.
$$

It follows from \eqref{e4.2} and estimates for ${\rm I}$ and ${\rm II}$ that we only need to show
$$
\Big|\Big\langle \E_\omega\Big(\sum_\alpha h_{Q_{\alpha}^K(\omega)}\chi_{Q_{\alpha}^K(\omega)}\Big),a\Big\rangle\Big|\leq C\varepsilon.
$$
For each $\omega\in \La^c$, we follow the proof of the term ${\rm II}$ to find large enough $K=K(\omega)$  and dyadic cube $Q_{\alpha}^K(\omega)$ such that $Q\subset Q_{\alpha}^K(\omega)$. Since $\PP(\La)=0$, it suffices to show
$$
\left|\int_{\La^c}\langle h_{Q_{\alpha}^K(\omega)},a\rangle d\omega\right|\leq C\varepsilon.
$$
Note that
$$
h_{Q_{\alpha}^K(\omega)}=\frac{\varepsilon}{\lambda}\sum_{Q_k^\omega}\alpha_{Q_k^\omega}(x)\chi_{Q_k^\omega}.
$$
So it suffices to show
$$
\left|\int_{\La^c}\Big\langle \sum_{Q_k^\omega}\alpha_{Q_k^\omega}\chi_{Q_k^\omega},a\Big\rangle d\omega\right|\leq C\lambda.
$$
We divide $Q_k^\omega$ to two classes according to its size. When $\ell(Q_k^\omega)\leq \ell(Q)$, the integral of $\omega$ does not help. It follows from \eqref{size condition} that
\begin{align*}
\left|\left\langle \sum_{\ell(Q_k^\omega)\leq \ell(Q)}\alpha_{Q_k^\omega}\chi_{Q_k^\omega},a \right\rangle \right|
&\leq  \sum_{\ell(Q_k^\omega)\leq \ell(Q)}\left|\left\langle  \alpha_{Q_k^\omega}\chi_{Q_k^\omega},a\right\rangle \right|\\
&\leq \sum_{\ell(Q_k^\omega)\leq \ell(Q)}\sup_{x\in Q_k^\omega} |\alpha_{Q_k^\omega}(x)|\,| \langle\chi_{Q_k^\omega},a\rangle|\\
&\leq C\lambda \sum_{\ell(Q_k^\omega)\leq \ell(Q)} \mu(Q_k^\omega\cap Q)^{1/2} \|a\|_{L^2}\\
&\leq C\lambda \sum_{\ell(Q_k^\omega)\leq \ell(Q)} \left(\frac{\mu(Q_k^\omega\cap Q)}{\mu(Q)}\right)^{1/2}.
\end{align*}
Note that $Q_k^\omega$ are $\gamma=2^{-1}$ graded sequence, that is, sparse collection of dyadic cubes. So
$$
\sum_{\ell(Q_k^\omega)\leq \ell(Q)} \left(\frac{\mu(Q_k^\omega\cap Q)}{\mu(Q)}\right)^{1/2}\leq C
$$
and thus
\begin{align*}
\left|\left\langle \sum_{\ell(Q_k^\omega)\leq \ell(Q)}\alpha_{Q_k^\omega}(x)\chi_{Q_k^\omega},a\right\rangle\right|\leq C\lambda.
\end{align*}

When $\ell(Q_k^\omega)> \ell(Q)$, we need the integral of $\omega$. Note that when $\ell(Q_k^\omega)> \ell(Q)$, for each level of dyadic cubes, there are only finite dyadic cubes intersected with $Q$. So we can use the level index $k$, that is, the size is $2^k$, to label $Q_k^\omega$. Then we change the order of summation of the level index $k$ and the integration of $\omega$ to get
\begin{align*}
&\left|\int_{\La^c} \left\langle \sum_{\ell(Q_k^\omega)>\ell(Q)}\alpha_{Q_k^\omega}(x)\chi_{Q_k^\omega},a \right \rangle d\omega \right|\\
&=\left|\int_{\La^c}\sum_{\ell(Q)< k\leq K(\omega)} \left \langle \sum_{\ell(Q_k^\omega)=2^k}\alpha_{Q_k^\omega}(x)\chi_{Q_k^\omega},a
\right\rangle d\omega\right|\\
&\leq \sum_{k>\ell(Q)}\int_{\La^c}|\langle\alpha_{Q_k^\omega}(x)\chi_{Q_k^\omega},a\rangle| \, d\omega\\
&\leq \sum_{k>\ell(Q)}\int_{{\La_k}}|\langle\alpha_{Q_k^\omega}(x)\chi_{Q_k^\omega},a\rangle| \, d\omega+\sum_{k>\ell(Q)}\int_{\La_k^c}|\langle\alpha_{Q_k^\omega}(x)\chi_{Q_k^\omega},a\rangle| \, d\omega\\
&=:{\rm III+IV}.
\end{align*}
For the term  ${\rm III}$, since $k>\ell(Q)$ and $\omega\in \La_k$, then $Q\subset Q_k^\omega$ and thus
\begin{align*}
 \sum_{k>\ell(Q)}\int_{{\La_k}}|\langle\alpha_{Q_k^\omega}(x)\chi_{Q_k^\omega},a\rangle |d\omega&\leq  \sum_{k>\ell(Q)}\int_{{\La_k}}|\langle\alpha_{Q_k^\omega}(x),a\rangle |d\omega=  \sum_{k>\ell(Q)}\int_{{\La_k}}|\langle \alpha_{Q_k^\omega}(x),L^2b\rangle |d\omega\\
 &=  \sum_{k>\ell(Q)} \frac{1}{2^{2k}}\int_{{\La_k}}|\langle (2^{2k}L)\alpha_{Q_k^\omega}(x),Lb\rangle| d\omega\\
 &\leq  \sum_{k>\ell(Q)} \frac{1}{2^{2k}} \sup_{x\in Q,\omega\in \La_k} |(2^{2k}L)\alpha_{Q_k^\omega}(x)|\cdot \|Lb\|_{L^1}\\
  &\leq  C \lambda \sum_{k>\ell(Q)} \frac{\ell(Q)^2}{2^{2k}}\\
  &\leq  C \lambda,
\end{align*}
where we used the estimate~\eqref{smooth condition} for $L\alpha_{Q_k^\omega}$.

For ${\rm IV}$, now we do not have $Q\subset Q_k^\omega$, but the integration of $\omega$ are small, that is,
\begin{align*}
 \sum_{k>\ell(Q)}\int_{\La_k^c}|\langle\alpha_{Q_k^\omega}(x)\chi_{Q_k^\omega},a\rangle |d\omega
  &\leq  \sum_{k>\ell(Q)} \sup_{x\in Q_k^\omega,\omega\in \La^c_k} |\alpha_{Q_k^\omega}(x)|\cdot \|a\|_{L^1}\cdot \PP(\La_k^c) \\
 &\leq  C \lambda \sum_{k>\ell(Q)} \PP(\La_k^c)\\
 &\leq  C \lambda \sum_{k>\ell(Q)} \left(\frac{\ell(Q)}{2^k}\right)^\eta\\
 &\leq  C \lambda.
\end{align*}
The proof of Theorem~\ref{mainth1} is complete.
\end{proof}

\medskip

In the end of this section, we give a direct application of our Theorem~\ref{th1.1}. Consider  the
Schr\"odinger operator
\begin{eqnarray}
	L=-\Delta+V(x)\ \ \ \ {\rm on}\ {\mathbb R}^n, \ \ \ n\geq 3,
	\label{e1.14}
\end{eqnarray}
where  $V\not\equiv 0$   is a nonnegative potential.  We assume that
$V$ belongs to reverse H\"older class  $  B_q$  for  some $q\geq n/2$,
that is,      $V$ is locally integrable and
\begin{equation}\label{e1.15}
	\left(\frac{1}{ |B|}\int_B V(y)^q~dy\right)^{1/q}\leq\frac{C}{|B|}\int_BV(y)~dy, \ \ \ \ {\rm for\ every\ ball \ }
	B.
\end{equation}
Then the operator   $ L$ is a self-adjoint
operator on $L^2(\RR)$.
From the Feynman-Kac formula, it is well-known that the    kernel $p_t(x,y)$
of the semigroup
$e^{-tL}$  satisfies the estimate
\begin{eqnarray}
	0\leq p_t(x,y)\leq {1\over{ (4\pi t)^{n/2} }} e^{-{{|x-y|^2}\over 4t}}.
	\label{e1.16}
\end{eqnarray}
However,
unless  $V$ satisfies additional conditions, the heat kernel can be
a discontinuous function of the
space variables and the H\"older continuity estimates may fail to
hold (see, for example, \cite{Da}).

By the Fefferman--Stein theorem \cite[Theorem 1.7]{DZ},  it is seen that
for $f\in {\rm BMO}_L({\mathbb R^n})$,
\begin{eqnarray}\label{e1.17}
	f=g_0 +\sum_{j=1}^n    R_j^{\ast} g_j
\end{eqnarray}
where $R_j= \partial_{x_j} L^{-1/2}$ is the Riesz transform associated with $L=-\Delta+V$, and where $g_0, g_1, \cdots, g_n\in L^{\infty}$ with $\|f\|_{\ast}\sim \sum_{j=0}^n \|g_j\|_{\infty}$. Theorem~\ref{th1.1}   can be formulated as

\begin{coro}\label{coro1.2}   Assume that $L=-\Delta +V(x)$ on ${\mathbb R}^n,   n\geq 3$, where
	$V$ is a nonnegative nonzero  function with $
	V\in B_q$  for  some $q\geq n/2$. Then
	there are constants $c_1$ and $c_2$  depending only on the dimension   such that
	\begin{eqnarray*}
	c_1\varepsilon_L(f)  \leq \inf \Big\{  \sum_{j=1}^n \|g_j\|_{\infty}:\
		\eqref{e1.17}\ {\rm holds\ for\ some\ }  g_0\in L^{\infty}  \Big\}  \leq   c_2\varepsilon_L(f)
	\end{eqnarray*}
\end{coro}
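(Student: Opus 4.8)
\textbf{Proof proposal for Corollary~\ref{coro1.2}.}
The plan is to derive Corollary~\ref{coro1.2} as a direct reformulation of Theorem~\ref{th1.1}, using the Fefferman--Stein--type decomposition \eqref{e1.17} established in \cite{DZ} for $L=-\Delta+V$ with $V\in B_q$. The key observation is that for this Schr\"odinger operator the heat kernel $p_t(x,y)$ satisfies \eqref{e1.16}, so conditions \textbf{(H1)} and \textbf{(H2)} hold, and Theorem~\ref{th1.1} applies: $\operatorname{dist}(f,L^\infty)\sim \varepsilon_L(f)$, with constants depending only on the dimension $n$. Thus it suffices to prove that
$$
\inf\Big\{\sum_{j=1}^n\|g_j\|_\infty:\ \eqref{e1.17}\text{ holds for some }g_0\in L^\infty\Big\}\ \sim\ \operatorname{dist}(f,L^\infty),
$$
with dimensional constants, after which the corollary follows by transitivity of $\sim$.

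For the direction $\operatorname{dist}(f,L^\infty)\lesssim \inf\{\sum_j\|g_j\|_\infty:\ \eqref{e1.17}\}$, I would take any admissible decomposition $f=g_0+\sum_{j=1}^n R_j^\ast g_j$ with $g_0\in L^\infty$, set $g:=g_0\in L^\infty$, and estimate $\|f-g\|_{\BL(X)}=\big\|\sum_{j=1}^n R_j^\ast g_j\big\|_{\BL(X)}\leq \sum_{j=1}^n\|R_j^\ast g_j\|_{\BL(X)}\lesssim \sum_{j=1}^n\|g_j\|_\infty$, using the fact from \cite{DZ} (or the boundedness of the adjoint Riesz transforms $R_j^\ast=L^{-1/2}\partial_{x_j}$ from $L^\infty$ to $\BMO_L(\R^n)$) that each $R_j^\ast$ maps $L^\infty$ boundedly into $\BMO_L(\R^n)$. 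Taking the infimum over all such decompositions gives $\operatorname{dist}(f,L^\infty)\lesssim \inf\{\sum_j\|g_j\|_\infty\}$.

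For the reverse direction, I would start from a near-optimal $g\in L^\infty$ with $\|f-g\|_{\BL(X)}\le 2\operatorname{dist}(f,L^\infty)$, so that $h:=f-g\in\BMO_L(\R^n)$ with $\|h\|_{\BL(X)}\lesssim \operatorname{dist}(f,L^\infty)$. Applying the Fefferman--Stein theorem \cite[Theorem 1.7]{DZ} to $h$ produces $\widetilde g_0,\widetilde g_1,\dots,\widetilde g_n\in L^\infty$ with $h=\widetilde g_0+\sum_{j=1}^n R_j^\ast \widetilde g_j$ and $\sum_{j=0}^n\|\widetilde g_j\|_\infty\lesssim \|h\|_{\BL(X)}$. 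Then $f=(g+\widetilde g_0)+\sum_{j=1}^n R_j^\ast\widetilde g_j$ is an admissible decomposition of the form \eqref{e1.17} with $g_0:=g+\widetilde g_0\in L^\infty$ and $g_j:=\widetilde g_j$, giving $\sum_{j=1}^n\|g_j\|_\infty\lesssim \|h\|_{\BL(X)}\lesssim \operatorname{dist}(f,L^\infty)$; taking the infimum finishes this direction.

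The only subtlety, and the point I would be most careful about, is making sure the quantitative version of \cite[Theorem 1.7]{DZ} is exactly the two-sided estimate $\|h\|_{\ast}\sim\inf\sum_{j=0}^n\|g_j\|_\infty$ quoted in \eqref{e1.17}, in particular that the implied constants depend only on $n$ (and the $B_q$ data of $V$, which here only affects that $L$ satisfies \textbf{(H1)}--\textbf{(H2)}, not the final constants in Theorem~\ref{th1.1}); if \cite{DZ} is stated only with one inequality, one would supply the easy converse as in the first direction above. Granting that, the corollary is immediate by chaining the two equivalences with Theorem~\ref{th1.1}.
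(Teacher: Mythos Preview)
Your proposal is correct and matches the paper's intended argument: the paper presents Corollary~\ref{coro1.2} simply as a reformulation of Theorem~\ref{th1.1} via the Fefferman--Stein decomposition~\eqref{e1.17} from \cite{DZ}, without spelling out the two directions, and your chaining of $\varepsilon_L(f)\sim\operatorname{dist}(f,L^\infty)\sim\inf\sum_{j=1}^n\|g_j\|_\infty$ is exactly the natural way to make this precise. Your caution about the constants is well placed; note that by~\eqref{e1.16} the Gaussian bound holds with constants independent of $V$, so the constants from Theorem~\ref{th1.1} are purely dimensional, and the only place $V$ could enter is through the constants in \cite[Theorem~1.7]{DZ}.
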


We also note that the condition $V\in  B_q$  for  some $q\geq n/2$ may not be necessary for certain special Schr\"odinger operators. One typical example is the Bessel operator $S_\lambda$, $\lambda>0$, introduced by Muckenhoupt--Stein \cite{MS} as follows:
\begin{align}\label{defn of Besl Schr opr}
S_\lambda f(x):=-\frac{d^2}{dx^2}f(x)+\frac{\lambda^2-\lambda}{x^2}f(x),\,x>0.
\end{align}
The operator $S_\lambda$  in \eqref{defn of Besl Schr opr} is a positive self-adjoint operator on $\mathbb R_+:=(0,\infty)$ and it can  be
written in divergence form as
$$S_\lambda=-x^{-\lambda}\ \frac{d}{dx}\ x^{2\lambda}\ \frac{d}{dx}\ x^{-\lambda}=:A_\lambda^\ast A_\lambda,$$
where $A_\lambda:=x^\lambda \ \frac{d}{dx}\ x^{-\lambda}$ and $A_\lambda^\ast:= -x^{-\lambda}\ \frac{d}{dx}\  x^\lambda$ is the adjoint operator of $A_\lambda$. The heat kernel of $S_\lambda$ satisfies the Gaussian upper bound and ${\rm BMO}_{S_\lambda}(\mathbb R_+)$ (the BMO space associated with $S_\lambda$) has the decomposition as in \eqref{e1.17} via the Riesz transform $R_{\lambda}= A_\lambda S^{-1/2}_\lambda$. That is, for
$f\in {\rm BMO}_{S_\lambda}(\mathbb R_+)$, $f=g_0 +    R_\lambda g_1$ with $g_0, g_1\in L^{\infty}$ and
$\|f\|_{{\rm BMO}_{S_\lambda}(\mathbb R_+)}\sim \sum_{j=0}^1 \|g_j\|_{\infty}$. Thus, Corollary \ref{coro1.2} holds for $L=S_\lambda$ on $\mathbb R_+$.

\medskip

\section{Proof of Theorem~\ref{th1.3}}
\setcounter{equation}{0}

To  prove Theorem~\ref{th1.3}, we assume that  $f\in {\rm BMO}_L(X)$ has compact support. For dyadic system $\omega\in \Omega$, we denote by $Q_\alpha^K(\omega)\in \mathcal D^K(\omega)$ the dyadic cube with level $K$ in this dyadic system.
 Noting that $\cup_\alpha Q_{\alpha}^K(\omega)=X$, we have
  $$
 f(x)=\sum_\alpha f(x)\chi_{Q_\alpha^K(\omega)}(x).
 $$
For each $Q_{\alpha}^K(\omega)$, it follows from Proposition~\ref{lem3.2} that
$$
\left(f(x)-e^{-\ell(Q_{\alpha}^K(\omega))^2L}f(x)\right)\chi_{Q_\alpha^K(\omega)}(x)=
g_{Q_{\alpha}^K(\omega)}(x)+h_{Q_{\alpha}^K(\omega)}(x).
$$
It implies that
\begin{eqnarray}\label{e55.1}
f(x)=\sum_{\alpha}\left(e^{-\ell(Q_{\alpha}^K(\omega))^2L}f(x)\chi_{Q_{\alpha}^K(\omega)}(x)+
g_{Q_{\alpha}^K(\omega)}(x)\chi_{Q_{\alpha}^K(\omega)}(x)+h_{Q_{\alpha}^K(\omega)}(x)\chi_{Q_{\alpha}^K(\omega)}(x)\right).
\end{eqnarray}
Let   $\varepsilon=\lambda=C_0\|f\|_{\BL(X)}$. From Proposition~\ref{lem3.2},
$$\|g_{Q_{\alpha}^K(\omega)}\|_\infty<C \|f\|_{\BL(X)}\ \ \ \
{\rm and}\ \ \ \ h_{Q_{\alpha}^K(\omega)}=C_0\sum_{Q^\omega} a_{Q^\omega}\chi_{Q^\omega},
$$
where   $a_{Q^\omega}$
  satisfies estimates~\eqref{size condition3.1},  \eqref{smooth condition3.3}, \eqref{smooth condition3.2} and \eqref{support condition3.4},
  and $\{Q^\omega\}$ is the $2^{-1}$-graded sequence in $Q_{\alpha}^K(\omega)$
   and $Q^\omega$ are as in Proposition~\ref{lem3.2}.

%we can take $K$ large enough such that the summation for $\alpha$ is a finite summation and there is a uniform bound for the number of $\alpha$, and $Q_{\alpha}^K(\omega)$ intersects with the support of $f$.

 Since $f\in L_{\rm loc}^1(X)$   and has compact support, $$e^{-\ell(Q_{\alpha}^K(\omega))^2L}f(x)\to 0
 $$
  uniformly for $x$ when $K\to \infty$. Therefore, there exits a positive number $K_0=K_0(f)$, independent of $\omega, x$, such that for $K>K_0$, $\|\sum_{\alpha}e^{-\ell(Q_{\alpha}^K(\omega))^2L}f\chi_{Q_{\alpha}^K(\omega)}\|_{L^\infty}
  \leq \frac{1}{8}\|f\|_{\BL(X)}$.
   We  denote by $Q_{\rm max}$ a  ball which contains $\bigcup_{\alpha,\omega}\{Q_{\alpha}^K(\omega): \ Q_{\alpha}^K(\omega)\cap \supp f\neq \emptyset\}$.

 We take the expectation over $\omega\in \Omega$ in \eqref{e55.1} to obtain
\begin{eqnarray}\label{ekk}
f(x)&=&g(x)+h(x),
\end{eqnarray}
where
\begin{align}\label{def:g}
g(x)= \E_\omega\left(\sum_{\alpha}\left(e^{-\ell(Q_{\alpha}^K(\omega))^2L}f(x)\chi_{Q_{\alpha}^K(\omega)}(x)+
g_{Q_{\alpha}^K(\omega)}(x)\chi_{Q_{\alpha}^K(\omega)}(x)\right)\right)
\end{align}
and
\begin{align}\label{def:h}
h(x)=\sum_{k\leq K} f_k(x)=\sum_{k\leq K} \E_\omega\left(\sum_{\alpha}\sum_{\ell(Q_\alpha^\omega)=2^k} a_{Q_\alpha^\omega}(x)\chi_{Q_\alpha^\omega}(x)\right).
\end{align}
It is clear that  $\|g\|_\infty<C \|f\|_{\BL(X)}$. Define
\begin{eqnarray*}
    &&\La = \{\omega\in\Omega:\mbox{ there is no cube\ } Q^\omega\in\D(\omega)
        \mbox{\ that contains\ the\ support\ of\ } f   \}.
\end{eqnarray*}
By condition~\eqref{Rcondition},
$$
\PP(\La)=0.
$$
If $\omega\in \La^c$, there exist $K=K(\omega)$  and dyadic cube $Q_{\alpha}^K(\omega)$ such that the support of $f$ is included in $ Q_{\alpha}^K(\omega)$. From this point of view, we can assume that there is just one $\alpha$ in the summation of \eqref{def:h} when we take expectation over $\omega\in \Omega$.
That is,
$$
h(x)=\sum_{k\leq K} f_k(x)=\sum_{k\leq K} \E_\omega\left(\sum_{\ell(Q^\omega)=2^k} a_{Q^\omega}(x)\chi_{Q^\omega}(x)\right).
$$
Recall $a_{Q^\omega}$
  satisfies estimates~\eqref{size condition3.1},  \eqref{smooth condition3.3}, \eqref{smooth condition3.2} and \eqref{support condition3.4} and $\{Q^\omega\}$ is the $2^{-1}$-graded sequence as in Proposition~\ref{lem3.2}.

Theorem~\ref{th1.3} will proved by an iteration argument. First,
we have the following result.
\begin{lemma}\label{le6.1}
	Let $\{f_k\}$ be a family of functions in \eqref{ekk}.
For every $\delta>0$, we denote by  $d\sigma_k$   surface measure on
	$X\times \{t=\delta 2^k\}$. Then
	$$
	d\sigma=\sum_{k\leq K} f_k(x)\,d\sigma_k
	$$
   is a Carleson measure with $\|\sigma\|_{\mathcal C}\leq C \delta^{-n}\|f\|_{\BL(X)} $.	
\end{lemma}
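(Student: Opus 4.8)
The plan is to prove the Carleson bound ball by ball. Since the $f_k$ change sign, $\sigma$ is a signed measure and I would estimate its total variation $|\sigma|$; as the slices $X\times\{t=\delta2^k\}$ are mutually singular, for any ball $B$,
$$
|\sigma|(\widehat B)=\sum_{k\le K:\ \delta2^k<r_B}\int_B|f_k(x)|\,d\mu(x).
$$
Recall that $\lambda=C_0\|f\|_{\BL(X)}$, that for fixed $\omega$ and fixed $k$ the cubes $Q^\omega$ with $\ell(Q^\omega)=2^k$ are pairwise disjoint, and that $|a_{Q^\omega}(x)|\le C\lambda$ on $2Q^\omega$ by~\eqref{size condition3.1}; hence
$$
|f_k(x)|\le\E_\omega\sum_{\ell(Q^\omega)=2^k}|a_{Q^\omega}(x)|\chi_{Q^\omega}(x)\le C\lambda\,\E_\omega\sum_{\ell(Q^\omega)=2^k}\chi_{Q^\omega}(x).
$$
Integrating over $B$, summing in $k$, and noting the resulting bound is uniform in $\omega$, the claim reduces to
$$
\Sigma(\omega):=\sum_{\substack{Q^\omega:\ \ell(Q^\omega)<r_B/\delta\\ Q^\omega\cap B\ne\emptyset}}\mu(Q^\omega\cap B)\le C\delta^{-n}\mu(B)\qquad\text{uniformly in }\omega.
$$

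I would split $\Sigma(\omega)=\Sigma_1+\Sigma_2$ according to $\ell(Q^\omega)\le r_B$ or $r_B<\ell(Q^\omega)<r_B/\delta$. For $\Sigma_1$, each cube sits inside its dyadic ancestor $R\in\D(\omega)$ of side length in $[r_B,2r_B)$; such an $R$ meets $B$, so by~\eqref{prop_cube4} and the doubling property~\eqref{doubling2} there are only $O(1)$ of them and each has $\mu(R)\le C\mu(B)$. Since $\{Q^\omega\}$ is a $2^{-1}$-graded, hence sparse, collection, $\sum_{Q^\omega\subseteq R}\mu(Q^\omega)\le C\mu(R)$; summing over the finitely many $R$ gives $\Sigma_1\le C\mu(B)$. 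For $\Sigma_2$, bound $\mu(Q^\omega\cap B)\le\mu(B)$; at each dyadic scale $2^j\in(r_B,r_B/\delta)$ only $O(1)$ cubes of $\D(\omega)$ meet $B$ (again by~\eqref{prop_cube4} and doubling, as $2^j>r_B$), and there are at most $\log_2(1/\delta)+1$ such scales, so $\Sigma_2\le C\big(1+\log_2(1/\delta)\big)\mu(B)\le C\delta^{-n}\mu(B)$ (with $\Sigma_2=0$ when $\delta\ge1$). Combining, $\Sigma(\omega)\le C\delta^{-n}\mu(B)$ uniformly in $\omega$, whence $|\sigma|(\widehat B)\le C\lambda\delta^{-n}\mu(B)\le CC_0\delta^{-n}\|f\|_{\BL(X)}\mu(B)$; taking the supremum over $B$ yields $\|\sigma\|_{\mathcal C}\le C\delta^{-n}\|f\|_{\BL(X)}$.

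The work here is combinatorial rather than analytic. The two facts to nail down are: (i) a $2^{-1}$-graded family of dyadic cubes is sparse, so the small cubes can be disjointified inside a fixed bounded collection of cubes of scale $\sim r_B$ — this follows by taking $E_{Q^\omega}:=Q^\omega\setminus E_{j+1}$ at each graded level $j$, which yields pairwise disjoint sets with $\mu(E_{Q^\omega})\ge\tfrac12\mu(Q^\omega)$; and (ii) a ball of radius $r_B$ meets only $O(1)$ dyadic cubes at each scale exceeding $r_B$, which is immediate from~\eqref{prop_cube4} and~\eqref{doubling2}. The $\delta^{-n}$ loss is produced entirely by the $\sim\log(1/\delta)$ dyadic scales lying between $r_B$ and $r_B/\delta$ (crudely absorbed into $\delta^{-n}$), and only the size estimate~\eqref{size condition3.1} is used here — the smoothness estimates~\eqref{smooth condition3.3}, \eqref{smooth condition3.2}, \eqref{support condition3.4} play no role in this lemma and enter only in the subsequent iteration.
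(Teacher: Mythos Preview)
Your proposal is correct and follows essentially the same approach as the paper: both reduce to bounding $\sum_{\ell(Q^\omega)\le \delta^{-1}r_B,\ Q^\omega\cap B\ne\emptyset}\mu(Q^\omega\cap B)$ using only the size condition~\eqref{size condition3.1} and the sparseness of the $2^{-1}$-graded family. The paper's proof is a one-line appeal to sparseness, implicitly embedding all relevant cubes inside $O(1)$ dyadic cubes of scale $\delta^{-1}r_B$ and invoking doubling to produce the $\delta^{-n}$; your split $\Sigma_1+\Sigma_2$ is a fleshed-out version of the same idea, and in fact your treatment of $\Sigma_2$ shows the loss is really only $O(\log(1/\delta))$, which you then crudely absorb into $\delta^{-n}$ to match the stated bound.
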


\begin{proof} Note that for any $Q$,  it follows from the sparse property of $Q^\omega$ that
\begin{align}\label{Carlesonproof}
 {1\over {\mu(Q)}}\int_{Q\times [0,\ell(Q))}  d\sigma &=\frac{1}{\mu(Q)}\int_{Q}\sum_{\delta 2^k\leq \ell(Q)} f_k(x)\,d\mu(x)\nonumber\\
%&\leq \frac{1}{\mu(Q)}E_\omega \sum_{ \ell(Q^\omega)\leq \delta^{-1}\ell(Q)}\int_{Q} |a_{Q^\omega}(x)|\chi_{Q^\omega}(x)d\mu(x)\nonumber\\
&\leq \frac{1}{\mu(Q)} \E_\omega \left(\sum_{ \ell(Q^\omega)\leq \delta^{-1}\ell(Q)}\mu(Q^\omega\cap Q)\left\|a_{Q^\omega}\right\|_{L^\infty(Q^\omega)}\right)\nonumber\\
%&\leq \frac{1}{|Q|} C\delta^{-n} |Q|\|f\|_{\BL(X)}\nonumber\\
&\leq  C\delta^{-n} \|f\|_{\BL(X)}.
\end{align}
This proves Lemma~\ref{le6.1}.
\end{proof}

Using Lemma~\ref{le6.1},
\begin{align*}
\int_{X\times [0,\infty)} p_{t^2}(x,y)\,d\sigma(y,t)%&=\sum_{k\leq K} \int_{X\times [0,\infty)} p_{t^2}(x,y)f_k(y)d\sigma_k\\
&=\sum_{k\leq K} \int_X p_{(\delta 2^k)^2}(x,y)f_k(y)\,d\mu(y)\\
&=\sum_{k\leq K} e^{-\delta^2 2^{2k}L}f_k(x).
\end{align*}
This allows us to  rewrite \eqref{ekk}  as
\begin{align}\label{ekkk}
	f(x)&=g(x)+ \int_{X\times [0,\infty)} p_{t^2}(x,y)\,d\sigma(y,t)
	+\left(h(x)-\int_{X\times [0,\infty)} p_{t^2}(x,y)\,d\sigma(y,t)\right)\nonumber\\
&=g(x)+ \int_{X\times [0,\infty)} p_{t^2}(x,y)\,d\sigma(y,t)
	+\sum_{k\leq K} \left(I-e^{-\delta^2 2^{2k}L}\right)f_k(x),
\end{align}
where
\begin{align}\label{ee-Balayage}
\|g\|_{L^\infty}+\|\sigma\|_{\mathcal{C}}\leq C \|f\|_{\BL(X)}.
\end{align}

From the Gaussian upper bound \textbf{(H2)},  we use
  the argument as in \eqref{Carlesonproof} to get
\begin{align}\label{e4.1+}
\left|\sum_{k\leq K} \left(I-e^{-\delta^2 2^{2k}L}\right)f_k(x) \chi_{(2Q_{\rm max})^c}(x)\right|
&= \left|\sum_{k\leq K}e^{-\delta^2 2^{2k}L}f_k(x)\chi_{(2Q_{\rm max})^c}(x)\right|\nonumber\\
%&\leq \sum_{k\leq K}\int_{Q_{\rm max}} \frac{1}{\mu(B(x,\delta 2^k))}e^{-\frac{d(x,y)}{\delta 2^k}}|f_k(y)|d\mu(y)\chi_{(2Q_{\rm max})^c}(x)\nonumber\\
%&\leq \sum_{k\leq K}\int_{Q_{\rm max}} \frac{1}{\mu(B(x,\delta 2^k))}e^{-\frac{d(x,y)}{\delta 2^k}}|f_k(y)|d\mu(y)\chi_{(2Q_{\rm max})^c}(x)\nonumber\\
&\leq \sum_{k\leq K} \left(\frac{\delta 2^k}{2^K}\right)\int_{Q_{\rm max}} \frac{|f_k(y)|}{\mu(Q_{\rm max})}d\mu(y)\nonumber\\
&\leq  \delta\frac{1}{\mu(Q_{\rm max})}\int_{Q_{\rm max}} \sum_{ 2^k\leq \ell(Q_{\rm max})}|f_k(y)| \,d\mu(y)\nonumber\\
&\leq C\delta \|f\|_{\BL(X)}\nonumber\\
&\leq \frac{1}{4} \|f\|_{\BL(X)}.
\end{align}
Note that  the above estimates \eqref{ee-Balayage} and \eqref{e4.1+} do not depend on $K$. Therefore, if we show that
\begin{align}\label{e4.1}
\left\|\sum_{k\leq K} \left(I-e^{-\delta^2 2^{2k}L}\right)f_k(x)\right\|_{\BL(X)}\leq \frac{1}{2} \|f\|_{\BL(X)}\ \ \  {\rm \ for } \ \delta\ {\rm small},
\end{align}
then  the proof of Theorem~\ref{th1.3}  is proved  by  an iteration
argument.
The proof of  \eqref{e4.1} follows from the duality argument and the following Lemmas~\ref{lem5.1}, \ref{lem5.2} and  \ref{lem5.3}.

\begin{lemma}\label{lem5.1}
Let
\begin{align}\label{f_k}
	f_k(x):= \E_\omega \left(\sum_{\ell(Q^\omega)=2^k} a_{Q^\omega}(x)\chi_{Q^\omega}(x)\right).
\end{align}
 Then there exists a large enough constant $A>0$ such that for any $(1,\infty,1)$-atom $a=Lb$ associated with $L$,  with support $Q$ and for any number $0<\delta<1$, we have
\begin{eqnarray}\label{e6.mm}
\sum_{2^k >A\ell(Q) }\left|\left\langle (I-e^{-\delta^2 2^{2k}L})f_k,a\right\rangle\right|\leq \frac{1}{12}\|f\|_{\bmo}.
\end{eqnarray}
\end{lemma}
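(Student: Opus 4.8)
The plan is to exploit the fact that $a=Lb$ is a $(1,\infty,1)$-atom supported on $Q$, so we can move the operator $(I-e^{-\delta^2 2^{2k}L})$ onto $b$ and gain smallness from the scale separation $2^k \gg A\ell(Q)$. Writing $\langle (I-e^{-\delta^2 2^{2k}L})f_k,a\rangle = \langle (I-e^{-\delta^2 2^{2k}L})f_k, Lb\rangle = \langle L(I-e^{-\delta^2 2^{2k}L})f_k, b\rangle$, we will use the self-adjointness (H1) to put the analysis onto the $a_{Q^\omega}$ side. Since $f_k(x) = \E_\omega(\sum_{\ell(Q^\omega)=2^k} a_{Q^\omega}(x)\chi_{Q^\omega}(x))$ and $b$ is supported in $Q$ with $\|b\|_\infty \le \ell(Q)^2\mu(Q)^{-1}$ and $\|Lb\|_\infty \le \mu(Q)^{-1}$, the pairing only sees those cubes $Q^\omega$ of side $2^k$ whose dilate meets $Q$, i.e.\ essentially a bounded number of cubes for each $(k,\omega)$.

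First I would reduce, via the semigroup identity $I-e^{-\delta^2 2^{2k}L} = \int_0^{\delta^2 2^{2k}} L e^{-sL}\,ds$ together with the pointwise heat kernel bound (H2) and the smoothness estimate \eqref{smooth condition3.3}, $|La_{Q^\omega}(x)| \le C\ell(Q^\omega)^{-2}\lambda = C2^{-2k}\lambda$, to a bound of the shape
\begin{align}\label{eq:plan1}
\left|\left\langle (I-e^{-\delta^2 2^{2k}L})f_k,a\right\rangle\right| \le C\,\delta^2 2^{2k}\cdot 2^{-2k}\lambda \cdot \E_\omega\!\!\sum_{\substack{\ell(Q^\omega)=2^k\\ 2Q^\omega\cap Q\ne\emptyset}} \frac{\mu(Q^\omega\cap \widetilde Q)}{\mu(Q)},
\end{align}
where $\widetilde Q$ is a fixed dilate of $Q$ absorbing the Gaussian tails; the $2^{2k}$ factors cancel, leaving $C\delta^2\lambda$ times a geometric factor. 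The key point is then that for fixed $k$ with $2^k > A\ell(Q)$, the expected number of cubes $Q^\omega$ at level $k$ that are close to $Q$, weighted by $\mu(Q^\omega\cap\widetilde Q)/\mu(Q)$, is $O(1)$ — but we must also sum over all such $k$, which diverges unless we recover decay in $k$. The decay comes either from the random lattice estimate \eqref{Rcondition} (the probability that $Q$ straddles a boundary of a level-$k$ cube, which forces $\mu(Q^\omega\cap Q)$ to be a proper fraction of $\mu(Q)$, decays like $(\ell(Q)/2^k)^\eta$) or, more robustly, from choosing $A$ large and using the Gaussian off-diagonal decay in (H2): when $2^k \gg \ell(Q)$ the support of $b$ (radius $\sim\ell(Q)$) lies deep inside scale $2^k$, so $e^{-sL}$ with $s\le\delta^2 2^{2k}$ acting near $Q$ cannot ``see'' the boundary structure, and in fact $L(I-e^{-\delta^2 2^{2k}L})$ applied to the piecewise constant-scale function $f_k$ is controlled by $\E_\omega$ of $\mu(\partial_{C\ell(Q)} Q^\omega \cap Q)/\mu(Q) \lesssim (\ell(Q)/2^k)^\eta$ via \eqref{Rcondition} and \eqref{annulars2}.

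Thus the plan is: (i) integrate the semigroup difference and apply (H2) and \eqref{smooth condition3.3} to reduce to the geometric sum \eqref{eq:plan1}; (ii) split each level-$k$ contribution according to whether $Q$ sits well inside a single level-$k$ cube $Q^\omega$ or straddles a boundary, using the $2^{-1}$-graded (sparse) property to control the count of relevant $Q^\omega$; (iii) in the ``well inside'' case use the vanishing-type cancellation — when $Q\subset Q^\omega$ and $b$ is smooth at scale $\ell(Q)$, the inner product $\langle L(I-e^{-\delta^2 2^{2k}L})f_k, b\rangle$ benefits from the fact that $f_k$ is locally of scale $2^k$ so the relevant quantity is essentially $\ell(Q)^2/2^{2k}$ times a bounded factor, exactly as in the term ${\rm III}$ estimate of the proof of Theorem~\ref{mainth1}; (iv) in the ``straddling'' case use \eqref{Rcondition} to get $\PP$-smallness, exactly as in term ${\rm IV}$; (v) sum the resulting geometric series $\sum_{2^k > A\ell(Q)} C\lambda\,(\ell(Q)/2^k)^\eta \le C\lambda A^{-\eta}$ and choose $A$ large, recalling $\lambda = c\|f\|_{\bmo}$, to beat the constant $1/12$. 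The main obstacle is step (iii)/(iv): unlike the classical setting one does not have $e^{-tL}1=1$, so there is no exact cancellation to exploit; one must instead extract decay purely from the interplay of the Gaussian kernel bound (H2), the derivative bound \eqref{smooth condition3.3} on $a_{Q^\omega}$, and the random-lattice probability estimate \eqref{Rcondition} — it is precisely here that the absence of a conservation property forces the quantitative $\delta^2$ gain from the semigroup difference and the careful bookkeeping of which cubes $Q^\omega$ the atom $a$ can test against.
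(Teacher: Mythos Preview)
Your steps (iii)--(v) are exactly the mechanism the paper uses: split according to whether $Q$ lies inside a single level-$k$ cube (the set $\La_k$) or not, use the atom structure $a=Lb$ together with \eqref{smooth condition3.3} to extract $(\ell(Q)/2^k)^2$ in the first case, use \eqref{Rcondition} to extract $(\ell(Q)/2^k)^\eta$ in the second, and sum to $CA^{-\eta}\|f\|_{\bmo}$. So the backbone of your plan matches the paper's proof.

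There are, however, two genuine issues. First, your step~(i) and the display \eqref{eq:plan1} cannot be obtained the way you describe. The bound $|La_{Q^\omega}|\le C2^{-2k}\lambda$ applies to $a_{Q^\omega}$, not to $a_{Q^\omega}\chi_{Q^\omega}$; the jump across $\partial Q^\omega$ means $a_{Q^\omega}\chi_{Q^\omega}$ is not in the domain of $L$, so you cannot write $Le^{-sL}(a_{Q^\omega}\chi_{Q^\omega})=e^{-sL}\big((La_{Q^\omega})\chi_{Q^\omega}\big)$ and hence cannot pull out the factor $2^{-2k}$ uniformly. The paper avoids this precisely by \emph{first} removing the characteristic function: on $\La_k$ one has $Q\subset Q_0^\omega$, so $\chi_{Q_0^\omega}\equiv 1$ on $\operatorname{supp}a$ and $\langle a_{Q_0^\omega}\chi_{Q_0^\omega},a\rangle=\langle a_{Q_0^\omega},Lb\rangle=\langle La_{Q_0^\omega},b\rangle$, and only now is \eqref{smooth condition3.3} applicable. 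In other words, the $\La_k/\La_k^c$ split is not a refinement of \eqref{eq:plan1}; it is what makes the use of \eqref{smooth condition3.3} legitimate in the first place.

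Second, the $\delta^2$ gain from the semigroup identity plays no role here: the final bound must be $\le\tfrac{1}{12}\|f\|_{\bmo}$ uniformly in $0<\delta<1$, and indeed the paper proves $CA^{-\varepsilon}\|f\|_{\bmo}$ with no $\delta$ appearing. For this reason the paper splits $I-e^{-\delta^2 2^{2k}L}$ into $I$ and $e^{-\delta^2 2^{2k}L}$ and treats them separately. The second piece produces extra ``boundary leakage'' terms (contributions of $e^{-\delta^2 2^{2k}L}\big(a_{Q^\omega_0}\chi_{(Q^\omega_0)^c}\big)$ and of neighbouring cubes $Q^\omega\ne Q^\omega_0$) that your outline does not address; these are the paper's terms ${\rm II}_{11}$ and ${\rm II}_{12}$, handled by a further stratification in the distance $d(Q,(Q_0^\omega)^c)$ combined with \eqref{Rcondition} and the Gaussian decay. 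Your parenthetical remark about ``careful bookkeeping'' gestures at this, but the argument there is nontrivial and is where most of the work in the proof actually lies.
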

\begin{proof}
One has
$$
{\rm LHS\ of \ } \eqref{e6.mm} \leq \sum_{2^k>A\ell(Q)} \left|\left\langle f_k,a \right\rangle\right| +\sum_{2^k>A\ell(Q)}  \left|\left\langle e^{-\delta^2 2^{2k}L}f_k, a \right\rangle\right|=:
{\rm I}+{\rm II}.
$$

Recall that
\begin{eqnarray*}
    &&\La_k := \{\omega\in\Omega: \mbox{ there exists a cube }Q^\omega\in\D^{k}(\omega)
        \mbox{ such that }Q\subset Q^\omega\}.
\end{eqnarray*}
Then we obtain that
\begin{align*}
{\rm I}%&= \sum_{2^k>A\ell(Q)} |\langle  E_\omega \sum_{\ell(Q^\omega)=2^k} a_{Q^\omega}(x)\chi_{Q^\omega}(x),a \rangle| \\
&\leq  \sum_{2^k>A\ell(Q)} \int_{\La_k } \Big|\Big\langle \sum_{\ell(Q^\omega)=2^k} a_{Q^\omega}\chi_{Q^\omega},a \Big\rangle\Big| \, d\omega
+\sum_{2^k>A\ell(Q)} \int_{\La^c_k } \Big|\Big\langle \sum_{\ell(Q^\omega)=2^k} a_{Q^\omega}\chi_{Q^\omega},a \Big\rangle\Big| \, d\omega\\
&=:{\rm I}_{1}+{\rm I}_{2}.
\end{align*}
For $\omega\in \La_k$ and $2^k >A\ell(Q)$, we denote by $Q^\omega_0$ the unique cube  such that  $Q^\omega_0\in\D^{k}(\omega)
        \mbox{ and }Q\subset Q^\omega_0$. Then we apply the properties of $(1,\infty,1)$-atom associated with $L$ to get
\begin{align*}
{\rm I}_{1}\leq  \sum_{2^k>A\ell(Q)} \int_{\La_k } |\langle a_{Q^\omega_0},a \rangle|\, d\omega
%&\leq  \sum_{2^k>A\ell(Q)} \int_{\La_k } \frac{1}{2^{2k}}|\langle (2^{2k} L)a_{Q^\omega}(x),b \rangle| d\omega
%+\sum_{2^k>A\ell(Q)} \PP(\La^c_k) \sup_{\ell(Q^\omega)=2^k}\| a_{Q^\omega}(x)\|_{L^\infty(Q^\omega)}\|a\|_{L^1}\\
&\leq \sum_{2^k>A\ell(Q)}  \frac{1}{2^{2k}}\| (2^{2k} L)a_{Q^\omega_0}\|_{L^\infty(Q^\omega_0)}\|b \|_{L^1}\\
&\leq C\|f\|_{\BL(X)}\sum_{2^k>A\ell(Q)} \Big(\frac{\ell(Q)}{2^k}\Big)^2\\
&\leq C A^{-\varepsilon} \|f\|_{\BL(X)}.
\end{align*}
For the term ${\rm I}_{2}$, we use \eqref{Rcondition} to get
\begin{align*}
{\rm I}_{2}&\leq \sum_{2^k>A\ell(Q)} \PP(\La^c_k) \sup_{\ell(Q^\omega)=2^k}\| a_{Q^\omega}\|_{L^\infty(Q^\omega)}\|a\|_{L^1}\\
%&\leq  \sum_{2^k>A\ell(Q)} \int_{\La_k } \frac{1}{2^{2k}}|\langle (2^{2k} L)a_{Q^\omega}(x),b \rangle| d\omega
%+\sum_{2^k>A\ell(Q)} \PP(\La^c_k) \sup_{\ell(Q^\omega)=2^k}\| a_{Q^\omega}(x)\|_{L^\infty(Q^\omega)}\|a\|_{L^1}\\
&\leq C\|f\|_{\BL(X)}\sum_{2^k>A\ell(Q)} \left(\frac{\ell(Q)}{2^k}\right)^{\varepsilon}\\
&\leq C A^{-\varepsilon} \|f\|_{\BL(X)}
\end{align*}
and thus ${\rm I} \leq C A^{-\varepsilon} \|f\|_{\BL(X)}.$

Next we  estimate the term ${\rm II}$. One can write
\begin{align*}
{\rm II}%&= \sum_{2^k>A\ell(Q)} |\langle E_\omega e^{-\delta^2 2^{2k}L}\left(\sum_{\ell(Q^\omega)=2^k} a_{Q^\omega}(x)\chi_{Q^\omega}\right)(x),a \rangle| \\
&\leq  \sum_{2^k>A\ell(Q)} \int_{\La_k } \Big|\Big\langle  e^{-\delta^2 2^{2k}L}\Big(\sum_{\ell(Q^\omega)=2^k} a_{Q^\omega}\chi_{Q^\omega}\Big),a \Big\rangle\Big| d\omega\\
&\quad\quad+\sum_{2^k>A\ell(Q)} \int_{\La^c_k }  \Big|\Big\langle  e^{-\delta^2 2^{2k}L}\Big(\sum_{\ell(Q^\omega)=2^k} a_{Q^\omega}\chi_{Q^\omega}\Big),a \Big\rangle\Big| d\omega\\
&=:{\rm II}_{1}+{\rm II}_{2}.
\end{align*}

For the term ${\rm II}_{2}$, we use the similar argument as the proof of ${\rm I}_{2}$ to get
\begin{align*}
{\rm II}_{2}&\leq \sum_{2^k>A\ell(Q)} \PP(\La^c_k) \sup_\omega
\Big\|e^{-\delta^2 2^{2k}L}\Big(\sum_{\ell(Q^\omega)=2^k} a_{Q^\omega}\chi_{Q^\omega}\Big)\Big\|_{L^\infty}\|a\|_{L^1}\\
%&\leq \sum_{2^k>A\ell(Q)} \PP(\La^c_k) \sup_\omega\|\sum_{\ell(Q^\omega)=2^k} a_{Q^\omega}(x)\chi_{Q^\omega}\|_{L^\infty}\|a\|_{L^1}\\
&\leq \sum_{2^k>A\ell(Q)} \PP(\La^c_k) \sup_{\omega,Q^\omega}\|a_{Q^\omega}\|_{L^\infty(Q^\omega)}\|a\|_{L^1}\\
&\leq C\sum_{2^k>A\ell(Q)} \left(\frac{\ell(Q)}{2^k}\right)^{\varepsilon}\|f\|_{\BL(X)}\\
&\leq C A^{-\varepsilon} \|f\|_{\BL(X)}.
\end{align*}
To estimate  the term ${\rm II}_{1}$, we let $Q_0^\omega$ be the dyadic cube which contains $Q$.  We write
\begin{align*}
{\rm II}_{1}%&\leq \sum_{2^k>A\ell(Q)} \int_{\La_k } |\langle  e^{-\delta^2 2^{2k}L}\left(\sum_{\ell(Q^\omega)=2^k,Q^\omega\neq Q_0^\omega} a_{Q^\omega}(x)\chi_{Q^\omega}\right)(x),a \rangle| d\omega\\
%&\quad\quad +\sum_{2^k>A\ell(Q)} \int_{\La_k } |\langle  e^{-\delta^2 2^{2k}L}\left( a_{Q_0^\omega}(x)\chi_{Q_0^\omega}\right)(x),a \rangle| d\omega\\
&\leq \sum_{2^k>A\ell(Q)} \int_{\La_k } \Big|\Big\langle  e^{-\delta^2 2^{2k}L}\Big(\sum_{\ell(Q^\omega)=2^k,Q^\omega\neq Q_0^\omega} a_{Q^\omega}\chi_{Q^\omega}\Big),a \Big\rangle\Big| \, d\omega\\
&\quad\quad +\sum_{2^k>A\ell(Q)} \int_{\La_k } \Big|\Big\langle  e^{-\delta^2 2^{2k}L}\Big( a_{Q_0^\omega}\chi_{(Q_0^\omega)^c}\Big),a \Big\rangle\Big|\, d\omega\\
&\quad\quad +\sum_{2^k>A\ell(Q)} \int_{\La_k } \left|\left\langle  e^{-\delta^2 2^{2k}L}\big( a_{Q_0^\omega}\big),a \right\rangle\right| \,d\omega\\
&=:{\rm II}_{11}+{\rm II}_{12}+{\rm II}_{13}.
\end{align*}
Let us  first estimate ${\rm II}_{13}$. Using  \eqref{support condition3.4}  and properties of $(1,\infty,1)$-atom associated with the operator $L$, we have
\begin{align*}
{\rm II}_{13}%&=\sum_{2^k>A\ell(Q)} \int_{\La_k } |\langle  e^{-\delta^2 2^{2k}L}\left( La_{Q_0^\omega}\right)(x),b \rangle| d\omega\\
&\leq \sum_{2^k>A\ell(Q)}  \frac{1}{2^{2k}}\left\|e^{-\delta^2 2^{2k}L}\left( (2^{2k}L)a_{Q_0^\omega}\right)\right\|_{L^\infty(Q)}\|b \|_{L^1}\\
&\leq C\|f\|_{\BL(X)}\sum_{2^k>A\ell(Q)} \left(\frac{\ell(Q)}{2^k}\right)^2\\
&\leq C A^{-2} \|f\|_{\BL(X)}.
\end{align*}
For the term  ${\rm II}_{12}$, we consider two cases: $\delta 2^k\geq \ell(Q)$ and $\delta 2^k<\ell(Q)$.
In the case that $\delta 2^k\geq \ell(Q)$, set
\begin{align*}
    &\La_{k,0} := \left\{\omega\in\Omega: d(Q,(Q_0^\omega)^c)<\delta 2^k\right\};\\
    &\La_{k,i} := \left\{\omega\in\Omega: d(Q,(Q_0^\omega)^c)\in[2^{i-1} \delta 2^k, 2^{i}\delta 2^k)\right\}, \ \  i=1,2,\ldots, M;\\
    &\La_{k,M+1} := \left\{\omega\in\Omega: d(Q,(Q_0^\omega)^c)\geq 2^{M-1} \delta 2^k\right\},
\end{align*}
where $M$ is the biggest integer smaller than $\log_2(2^k/\delta).$ This, together with  $a=Lb$,  gives
\begin{align*}
{\rm II}_{12}&\leq \sum_{2^k>A\ell(Q)}\sum_{i=0}^{M+1} \int_{\La_{k,i}} \Big|\Big\langle  e^{-\delta^2 2^{2k}L}\Big( a_{Q_0^\omega}\chi_{(Q_0^\omega)^c}\Big),a \Big\rangle\Big|\, d\omega\\
&\leq\sum_{2^k>A\ell(Q)} \sum_{i=0}^{M+1} \PP(\La_{k,i})\sup_{\omega\in \La_{k,i}} \frac{1}{\delta^2 2^{2k}}\Big|\Big\langle  (\delta^2 2^{2k}L)e^{-\delta^2 2^{2k}L}\big( a_{Q_0^\omega}\chi_{(Q_0^\omega)^c}\big),b \Big\rangle\Big|.
\end{align*}
By \eqref{Rcondition} and \eqref{support condition3.4},
\begin{align*}
&\sum_{2^k>A\ell(Q)} \PP(\La_{k,0})\sup_{\omega\in \La_{k,0}} \frac{1}{\delta^2 2^{2k}}\Big|\Big\langle  (\delta^2 2^{2k}L)e^{-\delta^2 2^{2k}L}\big( a_{Q_0^\omega}\chi_{(Q_0^\omega)^c}\big),b \Big\rangle\Big|\\
&\leq \sum_{2^k>A\ell(Q)} \left(\frac{\delta 2^k}{2^k}\right)^\varepsilon\frac{\|b\|_{L^1}}{\delta^2 2^{2k}}\sup_{\omega\in \La_{k,i}}
\left\|\int_{X} |h_{(\delta^2 2^{2k})}(x,y)||a_{Q_0^\omega}(y)|\,d\mu(y)\right\|_{L^\infty(Q)}\\
&\leq \sum_{2^k>A\ell(Q)} \delta^\varepsilon \left(\frac{\ell(Q)}{\delta 2^k}\right)^2 \|f\|_{\BL(X)}\\
&\leq C A^{-\varepsilon} \|f\|_{\BL(X)},
\end{align*}
where in the last inequality we used the condition $\delta 2^k\geq \ell(Q)$.
We then apply \eqref{Rcondition}, \eqref{e3.10} and \eqref{support condition3.4} to obtain
\begin{align*}
&\sum_{2^k>A\ell(Q)}\sum_{i=1}^{M+1} \PP(\La_{k,i})\sup_{\omega\in \La_{k,i}} \frac{1}{\delta^2 2^{2k}}\Big|\Big\langle  (\delta^2 2^{2k}L)e^{-\delta^2 2^{2k}L}\big( a_{Q_0^\omega}\chi_{(Q_0^\omega)^c}\big),b \Big\rangle\Big|\\
&\leq \sum_{2^k>A\ell(Q)} \sum_{i=1}^{M+1} \left(\frac{2^i\delta 2^k}{2^k}\right)^\varepsilon\frac{\|b\|_{L^1}}{\delta^2 2^{2k}}\sup_{\omega\in \La_{k,i}}
{\left\|\int_{(Q_0^\omega)^c} K_{(\delta^2 2^{2k}L)e^{-\delta^2 2^{2k}L}}(x,y)a_{Q_0^\omega}(y)\,d\mu(y)\right\|_{L^\infty(Q)}
}\\
&\leq C\sum_{2^k>A\ell(Q)} \sum_{i=1}^{M+1} \left({2^i\delta }\right)^\varepsilon\left(\frac{\ell(Q)}{\delta 2^k}\right)^22^{-Ni}\sup_{\omega\in \La_{k,i}}
{\left\|\int_{X} |h_{(2\delta^2 2^{2k})}(x,y)||a_{Q_0^\omega}(y)|\,d\mu(y)\right\|_{L^\infty(Q)}
}\\
%&\leq \sum_{2^k>A\ell(Q)} \sum_i \PP(\La_{k,i})\left(\frac{\ell(Q)}{\delta 2^k}\right)^2 2^{-Ni}\|a_{Q_0^\omega}\|_{L^\infty}\\
&\leq C \sum_{2^k>A\ell(Q)} \sum_i \left(\frac{2^i\delta 2^k}{2^k}\right)^\varepsilon \left(\frac{\ell(Q)}{\delta 2^k}\right)^2 2^{-Ni}\|f\|_{\BL(X)}\\
%&\leq \sum_{2^k>A\ell(Q)}  \delta^{\varepsilon-2} \left(\frac{\ell(Q)}{ 2^k}\right)^2 \|f\|_{\BL(X)}\\
%&\leq \sum_{2^k>A\ell(Q)}  \left(\frac{\ell(Q)}{ 2^k}\right)^{\varepsilon} \|f\|_{\BL(X)}\\
&\leq C A^{-\varepsilon} \|f\|_{\BL(X)},
\end{align*}
where $N$ is any integer and in the last inequality above we used the condition  $\delta 2^k\geq \ell(Q)$.

Now in the case that  $\delta 2^k\leq \ell(Q)$,  set
\begin{align*}
    &\tilde \La_{k,0} := \{\omega\in\Omega: d(Q,(Q_0^\omega)^c)< \ell(Q)\};\\
    &\tilde \La_{k,i} := \{\omega\in\Omega: d(Q,(Q_0^\omega)^c)\in [2^{i-1} \ell(Q), 2^{i}\ell(Q))\}, \ i=1,\ldots, \tilde{M};\\
    &\tilde \La_{k,\tilde{M}+1} := \{\omega\in\Omega: d(Q,(Q_0^\omega)^c)\geq 2^{\tilde{M}}\ell(Q)\},
\end{align*}
where $\tilde{M}$ is the biggest integer smaller than $\log_2({2^k/\ell(Q)})$.
Then
\begin{align*}
{\rm II}_{12}%&\leq\sum_{2^k>A\ell(Q)} \sum_i \PP(\tilde\La_{k,i})\sup_{\omega\in \tilde\La_{k,i}} |\langle  e^{-\delta^2 2^{2k}L}\left( a_{Q_0^\omega}(x)\chi_{(Q_0^\omega)^c}\right)(x),a \rangle|\\
%&\leq\sum_{2^k>A\ell(Q)} \sum_i \PP(\tilde\La_{k,i})\sup_{\omega\in \tilde\La_{k,i}}\left\| e^{-\delta^2 2^{2k}L}\left( a_{Q_0^\omega}\chi_{(Q_0^\omega)^c}\right)\right\|_{L^\infty(Q)}\|a\|_{L^1}\\
&\leq \sum_{2^k>A\ell(Q)} \sum_{i=0}^{\tilde{M}+1} \PP(\tilde\La_{k,i})\sup_{\omega\in \tilde\La_{k,i}}\left\|\int_{(Q_0^\omega)^c} K_{e^{-\delta^2 2^{2k}L}}(x,y)a_{Q_0^\omega}(y)\, d\mu(y)\right\|_{L^\infty(Q)}\\
%&\leq C\sum_{2^k>A\ell(Q)} \sum_i \PP(\tilde\La_{k,i})\left(\frac{2^i\ell(Q)}{\delta 2^k}\right)^{-N} \|f\|_{\BL(X)}\\
&\leq C \sum_{2^k>A\ell(Q)} \left[ \left(\frac{\ell(Q)}{2^k}\right)^\varepsilon+ \sum_{i=1}^{\tilde{M}+1} \left(\frac{2^i\ell(Q)}{2^k}\right)^\varepsilon \left(\frac{2^i\ell(Q)}{\delta 2^k}\right)^{-N}\right] \|f\|_{\BL(X)}\\
&\leq C A^{-\varepsilon} \|f\|_{\BL(X)},
\end{align*}
where in the last inequality above we used the condition $\delta 2^k\leq \ell(Q)$.  From the estimates in the above  two cases,    we see that  ${\rm II}_{12}\leq
  C A^{-\varepsilon} \|f\|_{\BL(X)}$.

Finally, let us  estimate  the term  ${\rm II}_{11}$. Since there are  finite $\{Q^\omega\}$  satisfing  ${\rm dist}(Q^\omega,Q^\omega_0)\leq 2^k$,  the similar argument as that in the estimate of ${\rm II}_{12}$ gives  the estimate for these $Q^\omega$. Hence,
it  remains   to compute the summation of $Q^\omega$ which satisfies ${\rm dist}(Q^\omega,Q^\omega_0)>2^k$. We have
\begin{align*}
{\rm II}_{11}&\leq\sum_{2^k>A\ell(Q)}
\sup_{\omega}\frac{1}{\delta^2 2^{2k}} \left|\left\langle\sum_{\ell(Q^\omega)=2^k,{\rm dist}(Q^\omega,Q^\omega_0)>2^k}
(\delta^2 2^{2k}L)e^{-\delta^2 2^{2k}L}\left( a_{Q^\omega}\chi_{Q^\omega}\right), b \right\rangle\right|\\
%&\leq\sum_{2^k>A\ell(Q)} \sup_{\omega}\frac{1}{\delta^2 2^{2k}} \|\sum_{\ell(Q^\omega)=2^k,{\rm dist}(Q^\omega,Q^\omega_0)>2^k} (\delta^2 2^{2k}L) e^{-\delta^2 2^{2k}L}\left( a_{Q^\omega}(x)\chi_{Q^\omega}\right)\|_{L^\infty(Q)}\|b\|_{L^1}\\
%&\leq \sum_{2^k>A\ell(Q)} \sup_{\omega}\left(\frac{\ell(Q)}{\delta 2^{k}}\right)^2\|\sum_{\ell(Q^\omega)=2^k,{\rm dist}(Q^\omega,Q^\omega_0)>2^k}\int_{Q^\omega} K_{A_{(\delta^2 2^{2k}L)\delta 2^k}}(x,y)a_{Q^\omega}(y)dy\|_{L^\infty(Q)}\\
&\leq \sum_{2^k>A\ell(Q)} \left(\frac{1}{\delta 2^{k}}\right)^2 \sup_{\omega} \left\|\int_{{\rm dist}(x,Q)>2^k} |K_{\delta^2 2^{2k}Le^{-\delta^2 2^{2k}L}}(x,y)| \, d\mu(y)\right\|_{L^\infty(Q)} \|a_{Q^\omega}\|_{L^\infty(Q^\omega)}\|b\|_{L^1}\\
&\leq \sum_{2^k>A\ell(Q)}\left(\frac{\ell(Q)}{\delta 2^{k}}\right)^2 \delta^{N} \|f\|_{\BL(X)},   \quad \mbox{for any large number N},\\
%&\leq C\sum_{2^k>A\ell(Q)}  \left(\frac{\ell(Q)}{ 2^k}\right)^{2} \|f\|_{\BL(X)}\\
&\leq C A^{-2} \|f\|_{\BL(X)}.
\end{align*}
This, together with estimates of ${\rm II}_{12}$ and  ${\rm II}_{13}$,
yields that ${\rm II}_{1}\leq C A^{-\varepsilon} \|f\|_{\BL(X)}$, and  so ${\rm II} \leq C A^{-\varepsilon} \|f\|_{\BL(X)}$.
Therefore, we combine estimates of ${\rm I}$ and ${\rm II} $  to obtain
$$\sum_{2^k >A\ell(Q) }\left|\left\langle (I-e^{-\delta^2 2^{2k}L})f_k,a\right\rangle\right|\leq C A^{-\varepsilon} \|f\|_{\BL(X)}\leq { 1\over  12} \|f\|_{\BL(X)},
$$
whenever $A$ is chosen to be a constant large enough. This completes the  proof of Lemma~\ref{lem5.1}.
\end{proof}

\begin{lemma}\label{lem5.2}
Let $f_k$ be as \eqref{f_k} and large number $A$ be as Lemma \ref{lem5.1}. Then there exists  a small positive constant $\delta$ such that  for arbitrary $(1,\infty,1)$-atom associated with $L$, $a=Lb$, with support $Q$,
$$
\sum_{\ell(Q) <2^k\leq A\ell(Q)}\Big|\Big\langle (I-e^{-\delta^2 2^{2k}L})f_k,a\Big\rangle\Big|\leq \frac{1}{12}\|f\|_{\bmo}.
$$
\end{lemma}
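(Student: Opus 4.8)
The plan is to exploit that the band $\ell(Q)<2^k\le A\ell(Q)$ (with $A$ the absolute constant already fixed in Lemma~\ref{lem5.1}) contains at most $C\log A$ integers $k$, so it suffices to prove the single--scale estimate
\[
\big|\big\langle (I-e^{-\delta^2 2^{2k}L})f_k,\,a\big\rangle\big|\ \le\ C\,\psi(\delta)\,\|f\|_{\bmo},
\qquad\text{where}\quad \psi(\delta)\longrightarrow 0\ \ \text{as}\ \ \delta\to 0^{+},
\]
with $C$ depending only on the structural constants of $X$ and $L$; one then picks $\delta$ so small (legitimately allowed to depend on the already fixed $A$) that $C(\log A)\psi(\delta)\le\tfrac1{12}$. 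Fix such a $k$ and put $t:=\delta 2^k$; imposing $\delta<1/A$ we get $t<\ell(Q)\le 2^k=\ell(Q^\omega)$ for every cube $Q^\omega$ occurring in $f_k$, so all conclusions of Lemma~\ref{lem3.7} — in particular \eqref{size condition3.1}, \eqref{smooth condition3.2} and \eqref{support condition3.4}, the latter two also with $\sqrt 2\,t$ in place of $t$ — apply to each $a_{Q^\omega}$. Writing $f_k^\omega:=\sum_{\ell(Q^\omega)=2^k}a_{Q^\omega}\chi_{Q^\omega}$ (a sum over \emph{disjoint} dyadic cubes of common side $2^k$) and using $\supp a\subset Q$ with the self-adjointness of $I-e^{-t^2L}$,
\[
\big\langle (I-e^{-t^2L})f_k,\,a\big\rangle=\E_\omega\int_Q\big(f_k^\omega(x)-e^{-t^2L}f_k^\omega(x)\big)\,a(x)\,d\mu(x).
\]

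The whole gain rests on the near-cancellation $e^{-t^2L}f_k^\omega\approx f_k^\omega$ away from the grid skeleton. Let $G^\omega$ be the union of the boundaries of the scale-$2^k$ cubes of $\D(\omega)$, fix $R\sim(\log(1/\delta))^{1/2}$ large enough that $Ce^{-R^2/2c}\le\delta^2$ ($c$ the Gaussian constant), and put $B^\omega:=\{x\in Q:\ d(x,G^\omega)<Rt\}$. If $x\in Q\setminus B^\omega$, then $x$ lies in a single cube $Q_x^\omega\in\D^k(\omega)$ with $B(x,Rt)\subset Q_x^\omega$, and on $Q_x^\omega$ the function $f_k^\omega$ equals $a_{Q_x^\omega}$ or $0$; thus $f_k^\omega=\phi+\psi_\omega$ with $\phi\in\{a_{Q_x^\omega},0\}$ and $\psi_\omega$ supported in $(Q_x^\omega)^c$, hence at distance $\ge Rt$ from $x$, so
\[
|f_k^\omega(x)-e^{-t^2L}f_k^\omega(x)|\le|(I-e^{-t^2L})\phi(x)|+\Big|\!\int_{d(x,y)\ge Rt}\!\! h_{t^2}(x,y)\psi_\omega(y)\,d\mu(y)\Big|.
\]
The first term is $\le C\delta^2\lambda$ by \eqref{smooth condition3.2} (or is $0$). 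In the second, transfer a factor $e^{-R^2/2c}$ out of the Gaussian weight and bound the remainder by \eqref{support condition3.4} at scale $\sqrt2\,t$ (for the part of $\psi_\omega$ equal to $a_{Q_x^\omega}\chi_{(Q_x^\omega)^c}$), by the pointwise bound $|f_k^\omega|\le C\lambda$ from \eqref{size condition3.1} (for the remaining cubes), and by $\int|h_{2t^2}(x,y)|\,d\mu(y)\le C$; this gives $\le Ce^{-R^2/2c}\lambda\le C\delta^2\lambda$. Hence $|f_k^\omega-e^{-t^2L}f_k^\omega|\le C\delta^2\lambda$ on $Q\setminus B^\omega$, whereas on $B^\omega$ one uses only the trivial bound $|f_k^\omega-e^{-t^2L}f_k^\omega|\le C\lambda$ (from $|f_k^\omega|\le C\lambda$ and $\|e^{-t^2L}\|_{L^\infty\to L^\infty}\le C$). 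Since $\|a\|_{L^1}\le1$ and $\|a\|_{L^\infty}\le\mu(Q)^{-1}$,
\[
\Big|\int_Q(f_k^\omega-e^{-t^2L}f_k^\omega)\,a\,d\mu\Big|\ \le\ C\delta^2\lambda+C\lambda\,\mu(Q)^{-1}\mu(B^\omega).
\]
Taking $\E_\omega$, applying Tonelli and the random-lattice estimate \eqref{Rcondition} yields $\E_\omega\mu(B^\omega)=\int_Q\PP\big(d(x,G^\omega)<Rt\big)\,d\mu(x)\le C(Rt/2^k)^\eta\mu(Q)=C(R\delta)^\eta\mu(Q)$, so altogether $\big|\langle(I-e^{-t^2L})f_k,a\rangle\big|\le C(\delta^2+(R\delta)^\eta)\lambda=C\psi(\delta)\|f\|_{\bmo}$ with $\psi(\delta)=\delta^2+\delta^\eta(\log(2/\delta))^{\eta/2}$. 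Summing over the $\le C\log A$ relevant $k$ and shrinking $\delta$ finishes the proof.

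The main obstacle is precisely that neither $\langle f_k^\omega,a\rangle$ nor $\langle e^{-t^2L}f_k^\omega,a\rangle$ is small — each is merely $O(\|f\|_{\bmo})$ — so one must extract the cancellation between them: off the dyadic skeleton this comes from the smoothness \eqref{smooth condition3.2} of $a_{Q^\omega}$ at the scale $t\ll\ell(Q^\omega)$, while the thin $Rt$-tube around the skeleton is tamed only in expectation, through \eqref{Rcondition}. Coupling these geometric and probabilistic gains via Fubini, and keeping the Gaussian-tail error subordinate to $\delta^2$ by the choice $R\sim(\log(1/\delta))^{1/2}$ together with an application of \eqref{support condition3.4} at the slightly enlarged scale $\sqrt2\,t$, is the delicate step; note that, in contrast to Lemma~\ref{lem5.1}, the special form $a=Lb$ of the atom plays no role here.
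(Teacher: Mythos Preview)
Your argument is correct and rests on the same three ingredients as the paper's proof: the smoothness estimate \eqref{smooth condition3.2} for $(I-e^{-t^2L})a_{Q^\omega}$, the Gaussian tail to handle contributions from other cubes, and the random-lattice probability bound \eqref{Rcondition} for points near the grid skeleton. The organization, however, differs. The paper first separates near and far cubes $Q^\omega$ (the far ones give the easy term ${\rm II}_2$), then for each nearby $Q^\omega$ obtains a \emph{pointwise} bound on $\big|\E_\omega\big[(I-e^{-t^2L})(a_{Q^\omega}\chi_{Q^\omega})(x)\big]\big|$ by decomposing $\Omega$ into dyadic shells $\Theta_{k,i}$ according to $d(x,\partial Q^\omega)$; summing the shell contributions $(2^i\delta)^\eta\cdot 2^{-iN}$ yields $C\delta^\eta$ uniformly in $x$. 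You instead keep $\omega$ fixed, split $Q$ geometrically into a good set $Q\setminus B^\omega$ (where you get $C\delta^2\lambda$ pointwise) and a thin bad tube $B^\omega$ (where only the trivial bound $C\lambda$ holds), and recover the smallness only \emph{after} taking $\E_\omega$ of $\mu(B^\omega)$ via Fubini and \eqref{Rcondition}. Your single threshold $R\sim(\log(1/\delta))^{1/2}$ plays the role of the paper's shell summation and makes the bookkeeping shorter, at the cost of the harmless logarithmic loss in $\psi(\delta)$. One small remark: when you invoke \eqref{support condition3.4} ``at scale $\sqrt{2}\,t$'' after peeling off $e^{-R^2/2c}$, you are really using that the proof of \eqref{support condition3.4} (which follows Lemma~\ref{B-k}) depends only on a Gaussian upper bound for the kernel, not on the kernel being $h_{t^2}$ itself; this is fine, but worth stating explicitly. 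Your closing observation that the structure $a=Lb$ is unused here is also correct---the paper's proof likewise uses only $\|a\|_{L^1}\le 1$ in this lemma.
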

\begin{proof}
Recall that $f_k(x)=\E_\omega \left(\sum_{\ell(Q^\omega)=2^k} a_{Q^\omega}(x)\chi_{Q^\omega}(x)\right)$.
Then
\begin{eqnarray*}
 &&\hspace{-1.5cm}\sum_{\ell(Q) <2^k\leq A\ell(Q)} \Big|\Big\langle \left(I-e^{-\delta^2 2^{2k}L}\right)f_k,a \Big\rangle  \Big|\\
%&=& \sum_{\ell(Q) <2^k\leq A\ell(Q)}|\langle E_\omega(I-e^{-\delta^2 2^{2k}L})( \sum_{\ell(Q^\omega)=2^k} a_{Q^\omega}(\cdot)\chi_{Q^\omega}(\cdot))(x),a \rangle|\\
&\leq& \sum_{\ell(Q) <2^k\leq A\ell(Q)}\Big|\Big\langle \E_\omega\Big(\Big(I-e^{-\delta^2 2^{2k}L}\Big)\Big( \sum_{\substack{\ell(Q^\omega)=2^k\\d(Q^\omega,Q)\leq 2^k}} a_{Q^\omega}\chi_{Q^\omega}\Big)\Big),a \Big\rangle\Big|\\
&&+\sum_{\ell(Q) <2^k\leq A\ell(Q)}\E_\omega\left(\Big|\Big\langle \Big(I-e^{-\delta^2 2^{2k}L}\Big)\Big( \sum_{\substack{\ell(Q^\omega)=2^k\\d(Q^\omega,Q)> 2^k}} a_{Q^\omega}\chi_{Q^\omega}\Big),a \Big\rangle\Big|\right)\\
& =:&{\rm II}_1+{\rm II}_2.
\end{eqnarray*}
For ${\rm II}_2$, note that $Q^\omega\cap Q=\emptyset$ when $\ell(Q^\omega)=2^k$ and $d(Q^\omega,Q)> 2^k$. Thus,
\begin{align*}
{\rm II}_2%&\leq \sum_{\ell(Q) <2^k\leq A\ell(Q)}E_\omega|\langle e^{-\delta^2 2^{2k}L}( \sum_{\substack{\ell(Q^\omega)=2^k\\d(Q^\omega,Q)> 2^k}} a_{Q^\omega}(\cdot)\chi_{Q^\omega}(\cdot))(x),a \rangle|\\
%&\leq\sum_{\ell(Q) <2^k\leq A\ell(Q)}\sup_\omega \|e^{-\delta^2 2^{2k}L}( \sum_{\substack{\ell(Q^\omega)=2^k\\d(Q^\omega,Q)> 2^k}} a_{Q^\omega}(\cdot)\chi_{Q^\omega}(\cdot))(x)\|_{L^\infty(Q)}|\|a\|_{L^1}\\
&\leq \|a_{Q^\omega}\|_{L^\infty(Q^\omega)}\sum_{\ell(Q) <2^k\leq A\ell(Q)}\sup_\omega \left\|\int_{d(y,Q)>2^k}|K_{e^{-\delta^2 2^{2k}L}}(\cdot,y)| \, d\mu(y)\right\|_{L^\infty(Q)}\|a\|_{L^1}\\
&\leq C\sum_{\ell(Q) <2^k\leq A\ell(Q)} \left(\frac{2^k}{\delta 2^k}\right)^{-N}\|f\|_{\BL(X)}, \ \quad \mbox{for large N}\\
&\leq C\delta\log A \|f\|_{\BL(X)},
\end{align*}
where in the last inequality we used $0<\delta<1$. Then we may take $\delta$ small enough such that $C\delta\log A<1/6$.

Consider ${\rm II}_1$.  Since the number of $Q^\omega$ satisfying  $\ell(Q^\omega)=2^k$ and $d(Q^\omega,Q)< 2^k$ is  a constant which depends only on the doubling constants in \eqref{doubling2},  we only need to consider one $Q^\omega$.

For any fixed $x$,  if $x\notin Q^\omega$, then $x$ must be in some $ Q_0^\omega\in\D^{k}(\omega)$. Denote
\begin{align*}
&\Theta_{k,0} := \Big\{\omega\in\Omega: d(x, (Q_0^\omega)^c)< \delta 2^k \Big\};\\
&\Theta_{k,i} := \Big\{\omega\in\Omega: x \mbox{ is in a dyadic ring $[2^{i-1} \delta 2^k, 2^{i}\delta 2^k)$ of the edge of }Q_0^\omega\Big\}, \ i=1,\ldots,M;\\
&\Theta_{k,M+1} := \Big\{\omega\in\Omega: d(x, Q_0^\omega)\geq   2^{M}\delta 2^k\Big\},
\end{align*}
where $M$ is the largest integer that is smaller than $\log 1/(2\delta)$.
Then for  $x\notin Q^\omega$, it holds
\begin{align}\label{ee-x-not-in}
\Big|\E_\omega\Big((I-e^{-\delta^2 2^{2k}L})( a_{Q^\omega}\chi_{Q^\omega})(x)\Big)\Big|%&=|E_\omega e^{-\delta^2 2^{2k}L}( a_{Q^\omega}(\cdot)\chi_{Q^\omega}(\cdot))(x)|\\
&\leq \sum_{i=0}^{M+1} \int_{\Theta_{k,i}}\Big |e^{-\delta^2 2^{2k}L}( a_{Q^\omega}\chi_{Q^\omega})(x)\Big| \, d\omega\nonumber\\
&\leq \sum_{i=0}^{M+1} \PP(\Theta_{k,i}) \sup_\omega \int_{Q^\omega} |h_{\delta 2^k}(x,y)||a_{Q^\omega}(y)|d\mu(y)\nonumber\\
%&\leq \sum_i (\frac{2^i\delta 2^k}{2^k})^\varepsilon \sup_\omega \|a_{Q^\omega}\|_{L^\infty} \int_{Q^\omega} |K_{\delta 2^k}(x,y)|dy\\
&\leq C\left(\delta^\varepsilon+\sum_{i=1}^{M+1} \left(\frac{2^i\delta 2^k}{2^k}\right)^\varepsilon   \left(\frac{2^i\delta 2^k}{\delta 2^k}\right)^{-N}\right)\|f\|_{\BL(X)}, \  \ \mbox{for large N}\nonumber\\
&\leq C \delta^\varepsilon \|f\|_{\BL(X)}.
\end{align}
If $x\in Q^\omega$, we write
\begin{align}\label{eII_1}
&\E_\omega\left((I-e^{-\delta^2 2^{2k}L})( a_{Q^\omega}\chi_{Q^\omega})(x)\right)&
\nonumber\\
&= \E_\omega\left((I-e^{-\delta^2 2^{2k}L})( a_{Q^\omega})(x)\right)-\E_\omega\left((I-e^{-\delta^2 2^{2k}L})( a_{Q^\omega}\chi_{(Q^\omega)^c})(x)\right).
\end{align}
The first term on the right side of \eqref{eII_1} follows from \eqref{smooth condition3.2} that
\begin{align}\label{ee-x-in-1}
\Big|\E_\omega\left((I-e^{-\delta^2 2^{2k}L})( a_{Q^\omega})(x)\right)\Big|%&= |E_\omega \int_0^{\delta^2 2^{2k}} L e^{-sL}ds( a_{Q^\omega}(\cdot))(x)|\\
%&\leq \sup_\omega 2^{-2k}\int_0^{\delta^2 2^{2k}}| {\color{red} e^{-sL}((2^{2k}L) a_{Q^\omega}(\cdot))(x)}|ds\\
%&\leq \sup_\omega 2^{-2k}\int_0^{\delta^2 2^{2k}}\|f\|_{\BL(X)}ds\\
%&\leq \sup_\omega 2^{-2k}\delta^2 2^{2k} \|f\|_{\BL(X)}\\
&\leq C\delta^2  \|f\|_{\BL(X)}.
\end{align}
For the second term on the right side of \eqref{eII_1}, we use the similar argument as the case  $x\notin Q^\omega$. For $i=0,1, \cdots M+1$, denote
\begin{eqnarray*}
    &&\Theta_{k,0} := \Big\{\omega\in\Omega:  d(x, (Q^\omega)^c)<\delta 2^k\Big\},\\
    &&\Theta_{k,i} := \Big\{\omega\in\Omega: x \mbox{ is in a dyadic ring $[2^{i-1} \delta 2^k, 2^{i}\delta 2^k)$ of the edge of }Q^\omega\Big\},\\
    &&\Theta_{k,M+1} := \Big\{\omega\in\Omega:  d(x, (Q^\omega)^c)\geq 2^M\delta 2^k\Big\},
\end{eqnarray*}
where $M$ is the biggest integer smaller than $\log_2 1/(2\delta)$.
 So for any $x\in Q^\omega$,  it follows from \eqref{support condition3.4} of properties of $a_{Q^\omega}$ that
\begin{align}\label{ee-x-in-2}
\big|\E_\omega\left((I-e^{-\delta^2 2^{2k}L})( a_{Q^\omega}\chi_{(Q^\omega)^c})(x)\right)\big|%&=|E_\omega e^{-\delta^2 2^{2k}L}( a_{Q^\omega}(\cdot)\chi_{(Q^\omega)^c}(\cdot))(x)|\\
&\leq \sum_{i=0}^{M+1} \int_{\Theta_{k,i}} \big|e^{-\delta^2 2^{2k}L}( a_{Q^\omega}\chi_{(Q^\omega)^c})(x)\big|\, d\omega\nonumber\\
&\leq \sum_{i=0}^{M+1} \PP(\Theta_{k,i}) \sup_\omega {\int_{(Q^\omega)^c} |h_{\delta^2 2^{2k}}(x,y)||a_{Q^\omega}(y)|\,dy}\nonumber\\
%&\leq \sum_i (\frac{2^i\delta 2^k}{2^k})^\varepsilon \sup_\omega \|a_{Q^\omega}\|_{L^\infty} \int_{Q^\omega} |K_{\delta 2^k}(x,y)|dy\\
&\leq C\left(\delta^\varepsilon+\sum_{i=1}^{M+1} \left(\frac{2^i\delta 2^k}{2^k}\right)^\varepsilon  \left(\frac{2^i\delta 2^k}{\delta 2^k}\right)^{-N}\right)\|f\|_{\BL(X)}, \ \ \mbox{for large N}\nonumber\\
&\leq C \delta^\varepsilon \|f\|_{\BL(X)}.
\end{align}
Combining \eqref{ee-x-not-in}--\eqref{ee-x-in-2},  we have showed that
$$
{\rm II}_1%=\sum_{\ell(Q) <2^k\leq A\ell(Q)}|\langle E_\omega(I-e^{-\delta^2 2^{2k}L})( \sum_{\substack{\ell(Q^\omega)=2^k\\d(Q^\omega,Q)\leq 2^k}} a_{Q^\omega}(\cdot)\chi_{Q^\omega}(\cdot))(x),a \rangle|
\leq C \delta^\varepsilon \|f\|_{\BL(X)} \log A<\frac{1}{6}\|f\|_{\BL(X)},
$$
by taking $\delta$ small enough. Therefore, the proof of Lemma \ref{lem5.2} is complete.
\end{proof}

\begin{lemma}\label{lem5.3}
Let $f_k$ be given in \eqref{f_k} and constant $A$ be as in Lemma \ref{lem5.1}. Then there exists  a small positive constant $\delta$ such that  for arbitrary $(1,\infty,1)$-atom $a=Lb$ associated with $L$,  with support $Q$,
$$
\sum_{2^k\leq \ell(Q)}\Big|\Big\langle (I-e^{-\delta^2 2^{2k}L})f_k,a\Big\rangle\Big|\leq \frac{1}{12}\|f\|_{\bmo}.
$$
\end{lemma}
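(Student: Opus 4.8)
The plan is to run the same scheme as in Lemmas~\ref{lem5.1} and \ref{lem5.2}: write $s_k:=\delta^2 2^{2k}$, expand $f_k$ via \eqref{f_k}, pull the expectation outside, and reduce matters to bounding $\E_\omega\sum_{k:\,2^k\le\ell(Q)}\sum_{\ell(Q^\omega)=2^k}\big|\big\langle (I-e^{-s_kL})(a_{Q^\omega}\chi_{Q^\omega}),a\big\rangle\big|$ by $\tfrac1{12}\|f\|_{\bmo}$. The essential difference from Lemma~\ref{lem5.2} is that the set of admissible scales is now infinite, so a bound of size $\delta^\varepsilon\|f\|_{\bmo}$ per scale is useless; instead each cube $Q^\omega$ must be charged a quantity comparable to $\mu(Q^\omega)/\mu(Q)$ times a positive power of $\delta$, and the sum over scales will then be absorbed by the Carleson packing of the $2^{-1}$-graded family $\{Q^\omega\}$ over $2Q$, namely $\sum_{Q^\omega\cap 2Q\neq\emptyset,\,\ell(Q^\omega)\le\ell(Q)}\mu(Q^\omega)\le C\mu(Q)$. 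Since $\delta<1$ we have $\sqrt{s_k}=\delta2^k\le\ell(Q^\omega)$ for every cube occurring here, so \eqref{smooth condition3.2} and \eqref{support condition3.4} (the latter also with time a fixed multiple of $s_k$, which forces $\delta$ small relative to the Gaussian constant $c$) are available throughout.

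I would first dispose of the cubes with $d(Q^\omega,Q)>\ell(Q)$. These do not meet $Q$, so the $I$-part of the pairing vanishes and only $-\langle a_{Q^\omega}\chi_{Q^\omega},e^{-s_kL}a\rangle$ survives; using $|a_{Q^\omega}|\le C\lambda$ on $Q^\omega$ from \eqref{size condition3.1} and the truncated-mass estimate $|e^{-s_kL}a(y)|\le C\mu(Q)^{-1}(\sqrt{s_k}/d(y,Q))^N$ for $d(y,Q)\gtrsim\sqrt{s_k}$ coming from \textbf{(H2)}, such a cube contributes at most $C\lambda\,\mu(Q^\omega)\mu(Q)^{-1}(\delta2^k/\ell(Q))^N$. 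Grouping the cubes into dyadic annuli about $Q$, estimating the annulus measures by doubling, and using $2^k\le\ell(Q)$, the whole far part is at most $C\lambda\,\delta^N\sum_{2^k\le\ell(Q)}(2^k/\ell(Q))^N\le C\lambda\,\delta^N$ for $N$ large, which is harmless.

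The main work is the near cubes, $d(Q^\omega,Q)\le\ell(Q)$ (so in particular every $Q^\omega\subseteq Q$ is included). For these I would split $Q^\omega$ into its interior $\{x\in Q^\omega:d(x,(Q^\omega)^c)>\delta2^k\}$ and the boundary shell of width $\delta2^k$. On the shell only the trivial bound $\|(I-e^{-s_kL})(a_{Q^\omega}\chi_{Q^\omega})\|_\infty\le C\lambda$ is available, but the shell has measure $\le C\delta^\eta\mu(Q^\omega)$ by Christ's annulus estimate \eqref{annulars2}, so this piece contributes $\le C\lambda\,\delta^\eta\,\mu(Q^\omega)/\mu(Q)$. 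On the interior one writes $a_{Q^\omega}\chi_{Q^\omega}=a_{Q^\omega}-a_{Q^\omega}\chi_{(Q^\omega)^c}$: the term $(I-e^{-s_kL})a_{Q^\omega}$ is $\le C\delta^2\lambda$ there by \eqref{smooth condition3.2} with $t=\delta2^k$, while $e^{-s_kL}(a_{Q^\omega}\chi_{(Q^\omega)^c})(x)$ is $\le C\lambda\,e^{-d(x,(Q^\omega)^c)^2/(cs_k)}$ for $x\in2Q^\omega$ — extract half of the Gaussian weight from \textbf{(H2)} and recognise the remaining integral as an instance of \eqref{support condition3.4} — and integrating this exponential tail against $|a|\le\mu(Q)^{-1}$ over $Q^\omega$, again via \eqref{annulars2}, gives $\le C\lambda\,\delta^\eta\,\mu(Q^\omega)/\mu(Q)$. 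For the near cubes lying outside $Q$ the $I$-part of the pairing is again absent and one uses instead the Gaussian decay of $e^{-s_kL}a$ across $d(Q^\omega,Q)$, with the cubes sitting within one Gaussian width of $\partial Q$ handled exactly as the $\Theta_{k,i}$-cubes in the proof of Lemma~\ref{lem5.2}, i.e. by decomposing $\Omega$ according to the distance from a point of $Q$ to the boundary of its random cube and invoking \eqref{Rcondition} to gain a factor $\delta^\eta$. Collecting, each near cube contributes $\le C\lambda(\delta^2+\delta^\eta)\mu(Q^\omega)/\mu(Q)$, the Carleson packing over $2Q$ turns the double sum into $\le C\lambda(\delta^2+\delta^\eta)$, and altogether the total is $\le C\lambda(\delta^2+\delta^\eta+\delta^N)$, which is $\le\frac1{12}\|f\|_{\bmo}$ once $\delta$ is small, since $\lambda=C_0\|f\|_{\bmo}$ with $C_0$ fixed.

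The step I expect to be the main obstacle is reconciling the heat time $s_k=\delta^2 2^{2k}$, which shrinks with $k$, against the fixed scale $\ell(Q)$ of the atom: one has to make sure that every non-interior contribution genuinely carries the factor $\mu(Q^\omega)/\mu(Q)$, so that the sum over all scales $2^k\le\ell(Q)$ converges through the Carleson packing rather than producing a divergent $\sum_k 1$. This is exactly what forces the ``far'' threshold to be taken at scale $\ell(Q)$ rather than $2^k$, and what makes the delicate straddling cubes near $\partial Q$ — where \eqref{smooth condition3.2} cannot be used — have to be absorbed through the random-lattice argument of Lemma~\ref{lem5.2} rather than by a direct pointwise estimate.
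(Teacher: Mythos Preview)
Your overall scheme --- charge each near cube $C\lambda\delta^\eta\mu(Q^\omega)/\mu(Q)$ and then sum over scales via the Carleson packing of the $2^{-1}$-graded family --- is exactly the paper's, and your far-cube argument is fine. But your treatment of the near cubes has a gap, and the remedy is simpler than what you propose.

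The paper explicitly notes that the random lattice plays no role in this lemma. Your invocation of \eqref{Rcondition} for the ``straddling'' cubes $Q^\omega$ sitting just outside $Q$ does not work as written: $Q$ is the fixed support ball of the atom, not a random dyadic cube, so the small-boundary probability in \eqref{Rcondition} says nothing about the event ``$Q^\omega$ lies within $\delta2^k$ of $\partial Q$''. There is a second, quieter omission: your interior/shell split only accounts for $x\in Q^\omega$, whereas the pairing with $a$ lives on all of $Q$, and on $Q\setminus Q^\omega$ one has $(I-e^{-s_kL})(a_{Q^\omega}\chi_{Q^\omega})=-e^{-s_kL}(a_{Q^\omega}\chi_{Q^\omega})$, which is certainly nonzero.

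Both issues dissolve at once if, for \emph{every} near cube $Q^\omega\subset CQ$ (regardless of its position relative to $Q$), you bound $\|(I-e^{-s_kL})(a_{Q^\omega}\chi_{Q^\omega})\|_{L^1(X)}$ directly and then pair with $\|a\|_\infty\le\mu(Q)^{-1}$. On $Q^\omega$ your own argument via \eqref{smooth condition3.2} and \eqref{support condition3.4} already gives $\le C\lambda\delta^\eta\mu(Q^\omega)$. On $(Q^\omega)^c$ only $-e^{-s_kL}(a_{Q^\omega}\chi_{Q^\omega})$ remains; decomposing $(Q^\omega)^c\setminus(2Q^\omega)^c$ into exterior annuli of $Q^\omega$ of width $\sim 2^i\delta2^k$ and applying \eqref{annulars2} \emph{to the dyadic cube $Q^\omega$} together with the Gaussian yields another $C\lambda\delta^\eta\mu(Q^\omega)$. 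This is precisely the paper's $\mathrm{III}_1$/$\mathrm{III}_2$ split. The key point you missed is that the thin-annulus estimate should be applied to $\partial Q^\omega$, where Christ's bound \eqref{annulars2} is available, not to $\partial Q$; once you organise things this way the straddling cubes require no special treatment and no randomness.
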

\begin{proof}

Recall that $f_k(x)=\E_\omega \left(\sum_{\ell(Q^\omega)=2^k} a_{Q^\omega}(x)\chi_{Q^\omega}(x)\right)$. In the proof of this lemma, we  will  not  use the average of $\omega$.
Let $Q^{(0)}=CQ$ be the ball with an absolute constant $C>1$  such that for all $\omega\in \Omega$, if $Q^\omega$ satisfies $\ell (Q^\omega)\sim\ell( Q)$ and $Q^\omega\cap Q\neq \emptyset$, then $Q^\omega \subset Q^{(0)}$.  Pave $X$ with cubes $Q^{(j)}$ ``congruent" to $Q^{(0)}$. They may intersect each other, but they are bounded overlap. The balls $\{Q^j\}$ also have the following property: for any $\omega\in \Omega$, if $\ell(Q^\omega)\leq \ell(Q)$, there exists a $Q^{(j)}$ such that $Q^\omega\subset Q^{(j)}$.
Otherwise, we can double  all initial $Q^{(j)}$. Then
\begin{align}\label{estiamtesforIII0}
\sum_{2^k< \ell(Q)}\Big|\Big\langle \left(I-e^{-\delta^2 2^{2k}L}\right)f_k,a \Big\rangle\Big|
&= \sum_{2^k<\ell(Q)}\Big|\Big\langle \E_\omega\Big(\Big(I-e^{-\delta^2 2^{2k}L}\Big)\Big( \sum_{\ell(Q^\omega)=2^k} a_{Q^\omega}\chi_{Q^\omega}\Big)\Big),a \Big\rangle\Big|\nonumber\\
&\leq \E_\omega \left(\sum_{j} \sum_{Q^\omega\subset Q^{(j)}}\Big|\Big\langle\Big(I-e^{-\delta^2 \ell(Q^\omega)^2L}\Big)\Big(a_{Q^\omega}\chi_{Q^\omega}\Big),a \Big\rangle\Big|\right).
%&\leq \sup_\omega \sum_{j}\sum_{Q^\omega\subset Q^{(j)}}\|(I-e^{-\delta^2 \ell(Q^\omega)^2L})(a_{Q^\omega}(\cdot)\chi_{Q^\omega}(\cdot))(x)\|_{L^1(Q)}
%\|a\|_{L^\infty} \\
\end{align}

If $Q^\omega\subset Q^{(j)}$, $j\neq 0$, we have $Q^\omega\cap Q=\emptyset$ and thus
\begin{align}\label{estiamtesforIII1}
 \Big|\Big\langle(I-e^{-\delta^2 \ell(Q^\omega)^2L})(a_{Q^\omega}\chi_{Q^\omega}),a \Big\rangle\Big|%\nonumber\\
%&=|\langle e^{-\delta^2 \ell(Q^\omega)^2L}(a_{Q^\omega}\chi_{Q^\omega})(x),a \rangle|\nonumber\\
%&\leq  \sup_{x\in Q}\int_{Q^\omega}|K_{e^{-\delta^2 \ell(Q^\omega)^2L}}(x,y) a_{Q^\omega}(y)|d\mu(y)\nonumber\\
&\leq   \sup_{x\in Q}\int_{Q^\omega}|K_{e^{-\delta^2 \ell(Q^\omega)^2L}}(x,y)| \,d\mu(y)\|a_{Q^\omega}\|_{L^\infty(Q^\omega)}\|a\|_{L^1}\nonumber\\
&\leq C\big\|f\big\|_{\BL(X)} \left(\frac{d(Q,Q^\omega)}{\delta \ell(Q^\omega)}\right)^{-n-1} \sup_{y\in Q^\omega} \frac{1}{\mu((y,\delta \ell(Q^\omega)))}\mu(Q^\omega)\nonumber\\
%&\leq \|f\|_{\BL(X)} \left(\frac{d(Q,Q^\omega)}{\delta \ell(Q^\omega)}\right)^{-n-1} \left(\frac{\ell(Q^{(j)})}{\delta \ell(Q^\omega)}\right)^n\frac{V(Q^\omega)}{V(Q^{(j)})}\nonumber\\
&\leq \delta\big\|f\big\|_{\BL(X)} \frac{\ell(Q)^{n+1}}{d(Q,Q^{(j)})^{n+1}}\frac{\mu(Q^\omega)}{\mu(Q^{(j)})} .
\end{align}

If $Q^\omega\subset Q^{(0)}$, then
\begin{align}\label{estiamtesforIII2}
 &\Big|\Big\langle\big(I-e^{-\delta^2 \ell(Q^\omega)^2L}\big)\big(a_{Q^\omega}\chi_{Q^\omega}\big),a \Big\rangle\Big|
\leq \Big\|\big(I-e^{-\delta^2 \ell(Q^\omega)^2L}\big)\big(a_{Q^\omega}\chi_{Q^\omega}\big)\Big\|_{L^1(Q)}\|a\|_{L^\infty}\nonumber\\
&\leq \frac{1}{\mu(Q)}\left(\Big\|\big(I-e^{-\delta^2 \ell(Q^\omega)^2L}\big)\big(a_{Q^\omega}\chi_{Q^\omega}\big)\Big\|_{L^1((Q^\omega)^c)}
+\Big\|\big(I-e^{-\delta^2 \ell(Q^\omega)^2L}\big)\big(a_{Q^\omega}\chi_{Q^\omega}\big)\Big\|_{L^1(Q^\omega)}\right)\nonumber\\
&=:\frac{1}{\mu(Q)}\Big({\rm III}_1+{\rm III}_2\Big).
\end{align}
For the term ${\rm III}_1$, we decompose $(Q^\omega)^c\backslash (2Q^\omega)^c$ into the  following annulus $\{Q^{\omega}_i\}_{i=0}^M$, where $M\sim{\log \delta^{-1}}$, $Q^{\omega}_0:=\{x: d(x,Q^\omega)\leq  \delta\ell(Q^\omega)\}$, and  $Q^{\omega}_i:=\{x: d(x,Q^\omega)\sim  2^{i}\delta\ell(Q^\omega)\}$, $i=1,\cdots,M$. Then we have
\begin{align*}
{\rm III}_1%&\leq \left\|e^{-\delta^2 \ell(Q^\omega)^2L}(a_{Q^\omega}(\cdot)\chi_{Q^\omega}(\cdot))(x)
%\right\|_{L^1((Q^\omega)^c)}\\
&\leq C\left\|\int_{Q^\omega} |K_{e^{-\delta^2 \ell(Q^\omega)^2L}}(\cdot,y)a_{Q^\omega}(y)|\,d\mu(y)\right\|_{L^1((Q^\omega)^c)}\\
&\leq  C\|f\|_{\BL(X)}\left( \left\|\int_{Q^\omega} |K_{e^{-\delta^2 \ell(Q^\omega)^2L}}(\cdot,y)|\,d\mu(y)\right\|_{L^1((2Q^\omega)^c)}+
\sum_{i= 0}^{\log \delta^{-1}}\left\|\int_{Q^\omega} |K_{e^{-\delta^2 \ell(Q^\omega)^2L}}(\cdot,y)|\, d\mu(y)\right\|_{L^1(Q^\omega_i)}\right)\\
&\leq C \|f\|_{\BL(X)}\left( \mu(Q^\omega)\delta+
\sum_{i= 0}^{\log \delta^{-1}}2^{-iN}\mu(Q^\omega_i)\right)\\
&\leq  C\|f\|_{\BL(X)} \mu(Q^\omega)\delta^{\eta},
\end{align*}
where in the last inequality above we used the fact $\mu(Q^\omega_i)\leq C2^{i\eta} \delta^\eta \mu(Q^\omega)$ by \eqref{annulars2}.

For the term ${\rm III}_2$,
\begin{align*}
{\rm III}_2&\leq \Big\|\big(I-e^{-\delta^2 \ell(Q^\omega)^2L}\big)a_{Q^\omega}\Big\|_{L^1(Q^\omega)}
+\Big\|\big(I-e^{-\delta^2 \ell(Q^\omega)^2L}\big)\big(a_{Q^\omega}\chi_{(Q^\omega)^c}\big)\Big\|_{L^1(Q^\omega)}\\
&=:{\rm III}_{21}+{\rm III}_{22}.
\end{align*}
It  follows from \eqref{smooth condition3.2} that
\begin{align*}
{\rm III}_{21}%&\leq \|\int_0^{\delta^2 \ell(Q^\omega)^2} L e^{-sL}ds( a_{Q^\omega}(\cdot))(x)\|_{L^1(Q^\omega)}\\
%&\leq \frac{1}{\ell(Q^\omega)^2}\|\int_0^{\delta^2 \ell(Q^\omega)^2} {\color{red}|e^{-sL}( \ell(Q^\omega)^2L a_{Q^\omega}(\cdot))(x)|}ds\|_{L^1(Q^\omega)}\\
%&\leq \|f\|_{\BL(X)}\frac{1}{\ell(Q^\omega)^2}\|\int_0^{\delta^2 \ell(Q^\omega)^2} ds\|_{L^1(Q^\omega)}\\
&\leq \|f\|_{\BL(X)}\delta^2 \mu(Q^\omega).
\end{align*}
For ${\rm III}_{22}$, the proof is very similar to that of ${\rm III}_1$.
Decompose $(Q^\omega)^c\backslash (2Q^\omega)^c$ to the  annulus $Q^{\omega}_i$ such that $d(Q^{\omega}_i, Q^\omega)\sim 2^i\delta\ell(Q^\omega)$
\begin{align*}
{\rm III}_{22}%&\leq \|e^{-\delta^2 \ell(Q^\omega)^2L}(a_{Q^\omega}(\cdot)\chi_{(Q^\omega)^c}(\cdot))(x)\|_{L^1(Q^\omega)}\\
&\leq {\Big\|\int_{(2Q^\omega)^c} |K_{e^{-\delta^2 \ell(Q^\omega)^2L}}(\cdot,y)a_{Q^\omega}(y)|\,d\mu(y)\Big\|_{L^1(Q^\omega)} }
+\sum_{i= 0}^{\log \delta^{-1}}\Big\|\int_{Q^\omega_i}
 |K_{e^{-\delta^2 \ell(Q^\omega)^2L}}(\cdot,y)a_{Q^\omega}(y)|\,d\mu(y)\Big\|_{L^1(Q^\omega)}\\
%&\leq {\color{red} \delta^\varepsilon\|\int_{X} h_{\delta\ell(Q^\omega)}(x,y)|a_{Q^\omega}(y)|d\mu(y)\|_{L^1(Q^\omega)} }
%+\sum_{i= 0}^{\log \delta^{-1}}\|\int_{Q^\omega_i} |K_{e^{-\delta^2 \ell(Q^\omega)^2L}}(x,y)a_{Q^\omega}(y)|d\mu(y)\|_{L^1(Q^\omega)}\\
&\leq \delta \|f\|_{\BL(X)} \mu(Q^\omega)+ \|a_{Q^\omega}(y)\|_{L^\infty(2Q^\omega)}\sum_{i= 0}^{\log \delta^{-1}}\Big\|\int_{Q^\omega_i} |K_{e^{-\delta^2 \ell(Q^\omega)^2L}}(\cdot,y)|\,d\mu(y)\Big\|_{L^1(Q^\omega)}\\
&\leq C \delta^\eta \|f\|_{\BL(X)} \mu(Q^\omega),
\end{align*}
where in the second inequality we used \eqref{support condition3.4}.
Combing estimates of ${\rm III}_1$ and ${\rm III}_2$, we substitute \eqref{estiamtesforIII1} and \eqref{estiamtesforIII2} to \eqref{estiamtesforIII0} to see
\begin{align*}
&\sum_{2^k< \ell(Q)}\Big|\Big\langle (f_k(x)-e^{-\delta^2 2^{2k}L}f_k(x)),a \Big\rangle\Big|\\
&\leq \E_\omega \left(\sum_{j\neq 0} \sum_{Q^\omega\subset Q^{(j)}}\delta\|f\|_{\BL(X)} \frac{\ell(Q)^{n+1}}{d(Q,Q^{(j)})^{n+1}}\frac{\mu(Q^\omega)}{\mu(Q^{(j)})}\right)+\E_\omega\left( \sum_{Q^\omega\subset Q^{(0)}}\delta^\eta \|f\|_{\BL(X)} \frac{\mu(Q^\omega)}{\mu(Q)}\right)\\
&\leq C\delta^\eta \|f\|_{\BL(X)} .
\end{align*}
This completes the proof of Lemma~\ref{lem5.3}.
\end{proof}

\medskip

\section{Further discussions  }
\setcounter{equation}{0}

In this section, we discuss some directions related to Theorems~\ref{th1.1} and \ref{th1.3} on the space ${\rm BMO}_L(X)$ associated with operators,  which might be helpful to further studies.

 1) When the  underlying space $(X,d,\mu)$ is Ahlfors regular, i.e., $\mu$ satisfies
\begin{eqnarray}\label{e8.11}
	cr^n \leq \mu(B(x,r))\leq C r^n
\end{eqnarray}
	for all $r>0$ uniformly for $x\in X$,
	 our  Theorems \ref{th1.1}  and \ref{th1.3}   still hold  when the heat semigroup $e^{-tL}$ is repalced by  the Poisson semigroup $e^{-t\sqrt L}$ of the operator $L$. The results can be proved with minor variations in the argument in the proof of Theorems~\ref{th1.1}  and \ref{th1.3},
	and we omit the detail here.

2) Recall that a weight $w(x)\in (A_2)$  if
$$
\sup_B \left({1\over |B|}\int_B wdx \right)\left({1\over |B|}\int_B {1\over w} dx \right)<\infty.
$$
It is known that $w\in (A_2)$ if and only if the Riesz transforms are bounded on $L^p(wdx)$ (see for instance, \cite{St2}).
As pointed out in \cite{GJ1},  a consequence  of the theorem of Garnett--Jones \cite{GJ1}
is a higher dimensional Helson--Szeg\"o theorem. They proved
the following result.

\begin{proposition}\label{prop8.1}
Suppose there is a positive constant $B_1(n)$  such that
\begin{eqnarray}\label{e8.1}
	f=g_0 +\sum_{j=1}^n R_j g_j
\end{eqnarray}
 with $\sum_{j=1}^n \|g_j\|_{\infty}<B_1(n)$, then
$e^{f}$ satisfies  $(A_2)$. Conversely, if $e^{f}\in (A_2)$, then \eqref{e8.1} holds for $f$ with
$\sum_{j=1}^n \|g_j\|_{\infty}<B_2(n)$, where the constant $B_2(n)$ depends only on $n$.
\end{proposition}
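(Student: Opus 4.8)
The plan is to deduce both implications from the Garnett--Jones theorem in its Fefferman--Stein form \eqref{e1.7}, combined with the elementary fact that, up to universal constants, membership of $e^{f}$ in $(A_2)$ is the same as the requirement $\varepsilon(f)<1$ in the John--Nirenberg inequality \eqref{e1.4}. The connecting device is the layer-cake formula: for a ball $B$ and $h:=f-f_B$,
$$
\frac{1}{|B|}\int_B e^{h}\,dx\ \le\ 1+\int_0^\infty e^{t}\,\frac{|\{x\in B:h(x)>t\}|}{|B|}\,dt ,
$$
and likewise with $h$ replaced by $-h$; since $\langle w\rangle_B\langle w^{-1}\rangle_B=\langle e^{h}\rangle_B\langle e^{-h}\rangle_B$ for $w=e^{f}$, uniform control of both right-hand integrals over all balls $B$ is exactly the assertion $e^{f}\in(A_2)$.

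For the first implication, suppose $f=g_0+\sum_{j=1}^n R_jg_j$ with $\sum_j\|g_j\|_\infty$ small. Since each $R_j$ is bounded from $L^\infty(\RR)$ into $\BMO(\RR)$ with norm depending only on $n$, we get $\mathrm{dist}(f,L^\infty)\le\|f-g_0\|_\ast=\big\|\sum_j R_jg_j\big\|_\ast\le C_n\sum_j\|g_j\|_\infty$, hence by \eqref{e1.5} that $\varepsilon(f)\le c_1^{-1}C_n\sum_j\|g_j\|_\infty$. Taking $B_1(n):=c_1/(2C_n)$ forces $\varepsilon(f)<\tfrac12$, so we may fix $\varepsilon\in(\varepsilon(f),1)$ together with $\lambda_0=\lambda_0(\varepsilon,f)$ for which \eqref{e1.4} holds. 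Splitting the $t$-integral above at $\lambda_0$, using the trivial bound $|\{|h|>t\}|\le|B|$ for $t\le\lambda_0$ and \eqref{e1.4} for $t>\lambda_0$, the tail $\int_{\lambda_0}^\infty e^{t(1-1/\varepsilon)}\,dt$ converges precisely because $1/\varepsilon>1$; this gives $\langle e^{\pm h}\rangle_B\le C(\varepsilon,\lambda_0)$ uniformly in $B$, so $[e^{f}]_{A_2}\le C(\varepsilon,\lambda_0)^2<\infty$.

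For the converse, let $w=e^{f}\in(A_2)$ with constant $[w]_{A_2}$. Jensen's inequality gives $\langle w\rangle_B\ge e^{f_B}$ and $\langle w^{-1}\rangle_B\ge e^{-f_B}$, so the $(A_2)$ condition yields $\langle e^{h}\rangle_B=e^{-f_B}\langle w\rangle_B\le[w]_{A_2}$ and symmetrically $\langle e^{-h}\rangle_B\le[w]_{A_2}$; hence $\langle e^{|h|}\rangle_B\le 2[w]_{A_2}$, which on the one hand shows $f\in\BMO(\RR)$ (by Jensen again) and on the other, via Chebyshev, gives $|\{x\in B:|h(x)|>\lambda\}|\le 2[w]_{A_2}\,e^{-\lambda}|B|$ for every ball $B$ and all $\lambda>0$. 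For each $\varepsilon'>1$ the right side is $\le e^{-\lambda/\varepsilon'}|B|$ once $\lambda$ is large, so $\varepsilon(f)\le 1$; then \eqref{e1.5} and \eqref{e1.7} give $\inf\{\sum_j\|g_j\|_\infty:f=g_0+\sum_j R_jg_j\}\le C_n\,\varepsilon(f)\le C_n$, and choosing $B_2(n):=C_n+1$ produces a decomposition \eqref{e8.1} with $\sum_j\|g_j\|_\infty<B_2(n)$. The one genuinely informative point — which I would flag as the crux rather than an obstacle — is that this last chain is driven only by the exponent $1$ in $e^{-\lambda/\varepsilon'}$ and never by the size of $[w]_{A_2}$ (which merely dictates how large $\lambda$ must be taken), so $B_2(n)$ truly depends on $n$ alone; beyond bookkeeping of the non-sharp constants $c_1,c_2$ and of $C_n$ — all dimension-dependent only — I foresee no serious technical difficulty.
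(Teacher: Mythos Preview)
The paper does not give its own proof of Proposition~\ref{prop8.1}; it is quoted from \cite{GJ1} as a known consequence of the Garnett--Jones distance theorem. Your derivation---reducing both directions to \eqref{e1.5}/\eqref{e1.7} via the equivalence between $e^{f}\in(A_2)$ and the bound $\varepsilon(f)\le 1$, established through the layer-cake/Chebyshev computations you sketch---is correct and is precisely the route the paper (and \cite{GJ1}) indicates.
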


Suppose that $L$ is an operator
on $L^2(X)$ satisfying \textbf{(H1)} and \textbf{(H2)}.
Let $w\in {\mathcal M}$ be a positive  weight function on ${\mathbb R^n}$.
We say a weight $w(x)\in (A_{2, L})$  if
$$
\sup_t  \left\| e^{-tL}  ( w)
e^{-tL}  ( w^{-1})   \right\|_{L^{\infty}(X)}
  <\infty.
$$

It was proved in \cite[Theorem 0.8]{PV} that the classical
$(A_2)$ coincides with the one associated with the standard Laplacian  $\Delta$ on $\mathbb R^n$, that is,  $(A_2)= (A_{2, \Delta})$, see also \cite{DKP}.

Our results in this article suggest that it is possible
to establish an analogous of Proposition~\ref{prop8.1}  when the classical
Riesz transforms  are replaced by the Riesz transforms associated with operator $L$. We will study the weights and $\BMO$ spaces associated with operators in future work.
%The interplay between Riesz transforms and
%the weight $(A_{2, L})$ associated with operators remains to be fully exploited.

%This problem is not clear for us even though   $L$ is the %Schr\"odinger operator   on ${\mathbb R}^n$ as in %Corollary~\ref{coro1.2}.

 \bigskip

\noindent
{\bf Acknowledgements}: The authors thank Guoqian Wang for helpful discussions.
 P. Chen, L. Song and L. Yan   were supported  by National Key R$\&$D Program of China 2022YFA1005700.
  P. Chen was supported by NNSF of China 12171489, Guangdong Natural Science Foundation 2022A1515011157. X.T. Duong was supported by  the Australian Research Council (ARC) through the research
 grant DP190100970.  J. Li  was supported   by ARC DP 220100285.  L. Song was supported by NNSF of China 12071490.

\bigskip

\end{document}